\newcommand\CA{{\mathcal A}} 
\newcommand\CB{{\mathcal B}}
\newcommand\CD{{\mathcal D}}
\newcommand\CF{{\mathcal F}}
\newcommand\CIF{{\mathcal {IF}}} 
\newcommand\CFac{{\mathcal {F\!AC}}} 
\newcommand\CIFAC{{\mathcal {IF\!AC}}} 
\newcommand\CHFAC{{\mathcal {HF\!AC}}} 
\newcommand\CHIFAC{{\mathcal {HIF\!AC}}} 
\newcommand\RF{{\mathcal {RF}}}  
\newcommand\CSS{{\mathcal {SS}}}
\newcommand\R{{\varrho}}
\newcommand\BBC{{\mathbb C}}
\newcommand\BBK{{\mathbb K}}
\newcommand\BBQ{{\mathbb Q}}
\newcommand\BBZ{{\mathbb Z}}
\newcommand\BBF{{\mathbb F}}
\newcommand\codim{\operatorname{codim}}
\newcommand\Poin{{\operatorname{Poin}}}
\numberwithin{equation}{section}
\theoremstyle{plain}
\newtheorem{lemma}[equation]{Lemma}
\newtheorem{theorem}[equation]{Theorem}
\newtheorem{corollary}[equation]{Corollary}
\newtheorem{proposition}[equation]{Proposition}
\theoremstyle{definition}
\newtheorem{defn}[equation]{Definition}
\newtheorem{remark}[equation]{Remark}
\newtheorem{example}[equation]{Example}
\thanks{We acknowledge 
support from the DFG-priority program 
SPP1489 ``Algorithmic and Experimental Methods in
Algebra, Geometry, and Number Theory''.}
\subjclass[2010]{Primary 52B30, 52C35, 14N20; Secondary 51D20}
\begin{document}

\title[Addition-Deletion Theorems for Factorizations of Orlik-Solomon Algebras]
{Addition-Deletion Theorems for Factorizations of Orlik-Solomon Algebras and
nice Arrangements}

\author[T. Hoge]{Torsten Hoge}
\address
{Institut f\"ur Algebra, Zahlentheorie und Diskrete Mathematik,
Fakult\"at f\"ur Mathematik und Physik,
Leibniz Universit\"at Hannover,
Welfengarten 1,
30167 Hannover, Germany}
\email{hoge@math.uni-hannover.de}

\author[G. R\"ohrle]{Gerhard R\"ohrle}
\address
{Fakult\"at f\"ur Mathematik,
Ruhr-Universit\"at Bochum,
D-44780 Bochum, Germany}
\email{gerhard.roehrle@rub.de}

\keywords{
Orlik-Solomon algebra,
supersolvable arrangement,
nice arrangement,
inductively factored arrangement,
free and inductively free arrangement}

\allowdisplaybreaks

\begin{abstract}
We study the notion of a 
nice partition or 
factorization 
of a hyperplane  arrangement 
due to Terao
from the early 1990s. 
The principal aim of this note is 
an analogue of 
Terao's celebrated
addition-deletion theorem for 
free arrangements for the class of 
nice arrangements.
This is a natural setting for the
stronger property of an inductive factorization
of a hyperplane  arrangement 
by Jambu and Paris.

In addition, we show that 
supersolvable arrangements are 
inductively factored and that 
inductively factored 
arrangements are 
inductively free.
Combined with 
our addition-deletion theorem 
this leads to the concept of an 
induction table for 
inductive factorizations.

Finally, we prove that 
the notions of
factored and inductively 
factored arrangements are compatible with 
the product construction for arrangements.
\end{abstract}

\maketitle


\section{Introduction}

Let $\BBK$ be a field and let $V = \BBK^\ell$.
Let $\CA = (\CA,V)$ be a central $\ell$-arrangement of hyperplanes 
in $V$.
The most basic algebraic invariant associated with an arrangement $\CA$ is its
so called \emph{Orlik-Solomon algebra} $A(\CA)$, introduced by Orlik and Solomon in \cite{orliksolomon:hyperplanes}.
The $\BBK$-algebra $A(\CA)$ is a graded and anti-commutative.
It is generated by $1$ in degree $0$ and by 
a set of degree $1$ generators $\{a_H \mid  H \in \CA\}$, e.g.\ see 
\cite[\S 3.1]{orlikterao:arrangements}.
Let $A(\CA) = \oplus_{i=0}^r A(\CA)_i$ be the decomposition of
$A(\CA)$ into homogeneous components, 
so that $\Poin(A(\CA), t) = \sum_{i=0}^r (\dim A(\CA)_i)t^i$,
where $r = r(\CA)$ is the rank of $\CA$.
In particular, $A(\CA)_0 = \BBK$ and  $A(\CA)_1 = \sum_{H\in \CA} \BBK a_H$.

Thanks to a fundamental result due to Orlik and Solomon
\cite[Thm.\ 2.6]{orliksolomon:hyperplanes} 
(cf.\ \cite[Thm.\ 3.68]{orlikterao:arrangements}),
the Poincar\'e polynomial of $A(\CA)$ 
coincides with the combinatorially defined Poincar\'e polynomial 
$\pi(\CA,t)$ of $\CA$,
\begin{equation}
\label{eq:piA}
\Poin(A(\CA), t) = \pi(\CA,t).
\end{equation}

The geometric significance of $A(\CA)$ stems 
from the fact that in case $\BBK = \BBC$ is the field of complex numbers,
Orlik and Solomon showed in 
\cite[Thm.\ 5.2]{orliksolomon:hyperplanes} 
that as an associative, graded $\BBC$-algebra $A(\CA)$
is isomorphic to 
the cohomology algebra of the complement $M(\CA)$ of 
the complex arrangement $\CA$ 
(cf.\ \cite[\S 5.4]{orlikterao:arrangements}):
\[
A(\CA) \cong H^*(M(\CA)).
\]
In particular, the Poincar\'e polynomial $\Poin(M(\CA), t)$ of $M(\CA)$
is given by $\Poin(A(\CA), t)$.

Let $\pi = (\pi_1, \ldots, \pi_s)$ be a partition of $\CA$ and let 
\[
[\pi_i] := \BBK + \sum_{H\in \pi_i} \BBK a_H
\] 
be the $\BBK$-subspace of $A(\CA)$ 
spanned by $1$ and the set of $\BBK$-algebra generators $a_H$ 
of $A(\CA)$ corresponding to the members in $\pi_i$.
So the Poincar\'e polynomial of the graded $\BBK$-vector space $[\pi_i]$ is just  
$\Poin([\pi_i],t) = 1 + | \pi_i| t$.
Consider the canonical $\BBK$-linear map 
\begin{equation}
\label{eq:factoredOS}
\kappa : [\pi_1] \otimes \cdots \otimes [\pi_s] \to A(\CA)
\end{equation}
given by multiplication. We say that $\pi$ gives
rise to a \emph{tensor factorization} of $A(\CA)$ 
if $\kappa$ is an isomorphism of graded $\BBK$-vector spaces. In this case
$s = r$, as $r$ is the top degree of $A(\CA)$, and thus we get 
a factorization of the Poincar\'e polynomial of $A(\CA)$ into linear terms
\begin{equation}
\label{eq:poinOS}
\Poin(A(\CA),t) = \prod_{i=1}^r (1 + |\pi_i| t).
\end{equation}
For $\CA = \Phi_\ell$ the empty arrangement, 
we set $[\varnothing] := \BBK$, so that 
$\kappa : [\varnothing] \cong A(\Phi_\ell)$.

In \cite[Thm.\ 5.3]{orliksolomonterao:hyperplanes},
Orlik, Solomon and Terao showed that 
a supersolvable arrangement $\CA$ admits a partition 
$\pi$  which gives rise to a tensor factorization of $A(\CA)$
via $\kappa$  in \eqref{eq:factoredOS} 
(cf.\ \cite[Thm.\ 3.81]{orlikterao:arrangements}).
Jambu gave an alternate proof of this fact 
in \cite[Prop.\ 3.2.2]{jambu:factored}.
In \cite{bjoernerziegler:factored},
Bj\"orner and Ziegler gave a sufficient condition for such 
factorizations of $A(\CA)$.

In \cite{terao:factored}, Terao was able to 
capture this tensor factorization property of $A(\CA)$ 
as in \eqref{eq:factoredOS}
purely combinatorially in terms of the 
underlying partition $\pi$, as follows.
Let $L(\CA)$ be the intersection lattice of $\CA$.
To each $X$ in $L(\CA)$ we associate
the subarrangement  $\CA_X$  of $\CA$,
where 
$\CA_X  =\{H \in \CA \mid X \subset H\}$.
Let $\pi = (\pi_1, \ldots, \pi_s)$ be a partition of $\CA$.
Following \cite{terao:factored}, 
$\pi$ is called \emph{nice} for $\CA$ or a \emph{factorization} of $\CA$ 
if firstly $\pi$ is \emph{independent}, i.e.\
for any choice $H_i \in \pi_i$ for $1 \le i \le s$,
the resulting $s$ hyperplanes are linearly independent, 
and secondly, for each $X$ in $L(\CA)$, 
the \emph{induced partition} $\pi_X$ of   
$\CA_X$ consisting of the non-empty blocks
of the form $\pi_i \cap \CA_X$ admits a singleton as one of its parts, 
see Definition \ref{def:factored}. 
We also say that $\CA$ is \emph{nice} or \emph{factored}
provided $\CA$ admits a nice partition.

In \cite[Thm.\ 2.8]{terao:factored},
Terao proved that 
$\pi$ gives
rise to a tensor factorization of the Orlik-Solomon algebra $A(\CA)$
via $\kappa$ 
as in \eqref{eq:factoredOS}
if and only if 
$\pi$ is 
nice for $\CA$, see Theorem \ref{thm:teraofactored}
(cf.\ \cite[Thm.\ 3.87]{orlikterao:arrangements}).
Note that $\kappa$ is not an isomorphim of $\BBK$-algebras.

In order to state our principal results, 
we need a bit more notation.
Suppose that $\CA$ is non-empty and 
let $\pi = (\pi_1, \ldots, \pi_s)$ be a  partition  of $\CA$.
Let $H_0 \in \pi_1$ and let
$(\CA, \CA', \CA'')$ be the triple associated with $H_0$. 
We have the \emph{induced partition} 
$\pi'$ of $\CA'$
consisting of the non-empty parts $\pi_i' := \pi_i \cap \CA'$.
Further, we have the \emph{restriction map} 
$\R = \R_{\pi,H_0} : \CA \setminus \pi_1 \to \CA''$ given by 
$H \mapsto H \cap H_0$, depending on $\pi$ and $H_0$.
Let $\pi_i'' := \R(\pi_i)$ for $i = 2, \ldots, s$.
Clearly, imposing that 
$\pi'' = (\pi''_2, \ldots, \pi''_s)$ is
again a partition of $\CA''$ entails that 
$\R$ is onto.
It turns out that the injectivity of $\R$ is the key condition
in our context. 

Our chief result for Orlik-Solomon algebras is
the following Addition-Deletion Theorem for tensor 
factorizations of $A(\CA)$ of the form \eqref{eq:factoredOS}.
Analogous to $\kappa$ in \eqref{eq:factoredOS},
$\kappa'$ and $\kappa''$ are the $\BBK$-linear maps 
determined  by the partitions $\pi'$ and $\pi''$ of 
$\CA'$ and $\CA''$, respectively.

\begin{theorem}
\label{theorem:add-del-OS}
Suppose that $\CA \ne \Phi_\ell$. 
Let $\pi = (\pi_1, \ldots, \pi_s)$ be a  partition  of $\CA$.
Let $H_0 \in \pi_1$ and let
$(\CA, \CA', \CA'')$ be the triple associated with $H_0$. 
Suppose that $\R: \CA \setminus \pi_1 \to \CA''$ 
is bijective.
Then any two of the following statements imply the third:
\begin{itemize}
\item[(i)] $\kappa : [\pi_1] \otimes \cdots \otimes [\pi_s] \to  A(\CA)$
is an isomorphism of graded $\BBK$-vector spaces;
\item[(ii)] $\kappa' : [\pi_1'] \otimes \cdots \otimes [\pi_s']  \to  A(\CA')$
is an isomorphism of graded $\BBK$-vector spaces;
\item[(iii)] $\kappa'' : [\pi_2''] \otimes \cdots \otimes [\pi_s'']  \to  A(\CA'')$
is an isomorphism of graded $\BBK$-vector spaces.
\end{itemize}
\end{theorem}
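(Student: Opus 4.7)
The plan is to fit the three multiplication maps $\kappa$, $\kappa'$, $\kappa''$ into a commutative ladder of short exact sequences of graded $\BBK$-vector spaces and then invoke the short five-lemma.

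For the bottom row I would use the classical Brieskorn--Orlik--Solomon short exact sequence
$$0 \longrightarrow A(\CA') \stackrel{\iota}{\longrightarrow} A(\CA) \stackrel{\theta}{\longrightarrow} A(\CA'')[-1] \longrightarrow 0,$$
where $\iota$ is the natural inclusion and $\theta$ is the degree $-1$ contraction that sends a monomial of the form $a_{H_0}a_{K_1}\cdots a_{K_p}$ (with $K_j \ne H_0$) to $\pm\, a_{K_1 \cap H_0}\cdots a_{K_p \cap H_0}$ and kills monomials not involving $a_{H_0}$ (see e.g.\ \cite[Ch.~3]{orlikterao:arrangements}). For the top row, I would combine the decomposition $[\pi_1] = [\pi_1'] \oplus \BBK a_{H_0}$ of graded $\BBK$-vector spaces with the observation that $[\pi_i'] = [\pi_i]$ for $i \ge 2$ to obtain
$$0 \longrightarrow [\pi_1'] \otimes \cdots \otimes [\pi_s'] \longrightarrow [\pi_1] \otimes \cdots \otimes [\pi_s] \longrightarrow \BBK a_{H_0} \otimes [\pi_2'] \otimes \cdots \otimes [\pi_s'] \longrightarrow 0.$$
The bijectivity hypothesis on $\R$ enters here: it supplies block-wise bijections $\pi_i \to \pi_i''$ for $i \ge 2$ and hence graded isomorphisms $[\pi_i'] \cong [\pi_i'']$ via $a_H \mapsto a_{\R(H)}$. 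Together with the degree shift coming from tensoring with $\BBK a_{H_0}$, this identifies the right-hand term with $([\pi_2''] \otimes \cdots \otimes [\pi_s''])[-1]$.

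Next I would check that the resulting ladder, with vertical maps $\kappa'$, $\kappa$ and $\kappa''[-1]$, commutes. The left-hand square is immediate since $\iota$ is the identity on generators and both $\kappa'$ and $\kappa$ are simply multiplication. For the right-hand square, I would trace a pure tensor $a_{H_0} \otimes a_{H_2} \otimes \cdots \otimes a_{H_s}$ with $H_i \in \pi_i$: the composition $\theta \circ \kappa$ sends it to $\pm\, a_{\R(H_2)} \cdots a_{\R(H_s)} \in A(\CA'')$, which is precisely what $\kappa''$ produces after the identification set up above (any uniform sign can be absorbed into that identification). A short-five-lemma diagram chase then yields the conclusion: if any two of $\kappa, \kappa', \kappa''$ are isomorphisms, so is the third.

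The main obstacle I expect is the verification of the right-hand square. It requires matching the explicit formula for the Brieskorn--Orlik--Solomon map $\theta$ with the combinatorial restriction map $\R$---concretely, the fact that for $H \in \pi_i$ with $i \ge 2$ the geometric intersection $H \cap H_0$ that defines $\theta$ agrees with $\R(H) \in \CA''$---and careful sign bookkeeping when moving $a_{H_0}$ into leading position in a monomial. Note also that the bijectivity of $\R$ plays a second, purely combinatorial role: it forces the identity $\Poin([\pi_1]\otimes \cdots \otimes [\pi_s],t) = \Poin([\pi_1']\otimes \cdots \otimes [\pi_s'],t) + t\,\Poin([\pi_2'']\otimes \cdots \otimes [\pi_s''],t)$, the tensor-side avatar of the Orlik--Solomon relation $\Poin(A(\CA),t) = \Poin(A(\CA'),t) + t\,\Poin(A(\CA''),t)$ visible from \eqref{eq:piA}, which serves as a useful consistency check on the whole construction.
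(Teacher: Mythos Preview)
Your approach is correct, but it is genuinely different from the paper's. The paper does not argue at the level of the Orlik--Solomon exact sequence at all. Instead, it invokes Terao's characterisation (Theorem~\ref{thm:teraofactored}) to translate each of (i), (ii), (iii) into the purely combinatorial assertion that the corresponding partition $\pi$, $\pi'$, $\pi''$ is nice, and then proves the resulting combinatorial Addition--Deletion Theorem (Theorem~\ref{theorem:add-del-factored}, in its sharper form Theorem~\ref{thm:add-del-factored}) by a direct lattice-theoretic argument checking independence and the singleton condition case by case. Theorem~\ref{theorem:add-del-OS} is then literally a one-line corollary.

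Your route bypasses Terao's theorem entirely and is arguably more conceptual for the $A(\CA)$-statement; the paper's route has the advantage that the combinatorial theorem is what is actually needed later (for inductive factorizations, induction tables, etc.), so proving it first is the natural order for their purposes. A couple of small technical remarks on your write-up: the identity you need for the right-hand square, namely $\theta(a_{H_0}\cdot y)=\bar\lambda(y)$ for $y\in A(\CA')$ with $\bar\lambda(a_H)=a_{H\cap H_0}$, is precisely how the map $j$ in the triple exact sequence is constructed in \cite[\S 3.1]{orlikterao:arrangements}, so this step is standard rather than an obstacle; and ``any two isomorphisms imply the third'' needs the snake-lemma six-term sequence $0\to\ker\kappa'\to\ker\kappa\to\ker\kappa''\to\operatorname{coker}\kappa'\to\operatorname{coker}\kappa\to\operatorname{coker}\kappa''\to 0$ rather than the short five lemma in its narrow form (which only gives the middle map from the outer two), though of course this is an equally routine diagram chase.
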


Theorem \ref{theorem:add-del-OS} 
is an immediate consequence of
Terao's Theorem \ref{thm:teraofactored} 
and the following result which is 
an analogue of Terao's celebrated
Addition-Deletion Theorem \ref{thm:add-del} for 
free arrangements for the class of 
nice arrangements.

\begin{theorem}
\label{theorem:add-del-factored}
Suppose that $\CA \ne \Phi_\ell$. 
Let $\pi = (\pi_1, \ldots, \pi_s)$ be a  partition  of $\CA$.
Let $H_0 \in \pi_1$ and let
$(\CA, \CA', \CA'')$ be the triple associated with $H_0$. 
Suppose that $\R: \CA \setminus \pi_1 \to \CA''$ 
is bijective.
Then any two of the following statements imply the third:
\begin{itemize}
\item[(i)] $\pi$ is nice for $\CA$;
\item[(ii)] $\pi'$ is nice for $\CA'$;
\item[(iii)] $\pi''$ is nice for $\CA''$.
\end{itemize}
\end{theorem}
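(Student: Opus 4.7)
The plan is to decouple niceness into its two constituent conditions---independence and the singleton condition at every flat---and treat each of the three implications by a case analysis governed by the position of a flat $X$ relative to $H_0$. I stratify $L(\CA)$ into Type A flats (those with $X \subseteq H_0$, equivalently $H_0 \in \CA_X$) and Type B flats (those with $H_0 \notin \CA_X$); direct inspection shows that $L(\CA'')$ identifies with the Type A part of $L(\CA)$, that every Type B flat in $L(\CA)$ lies in $L(\CA')$ with $\CA'_X = \CA_X$, and that a Type A flat $X$ lies in $L(\CA')$ exactly when $\cap(\CA_X \setminus \{H_0\}) \subseteq H_0$. The independence piece is handled first: bijectivity of $\R$ identifies, for any $H_i \in \pi_i$ with $i \geq 2$, the hyperplanes $H_i \cap H_0$ with representatives of $\pi''$, so the choice $H_1 = H_0$ renders independence of $\pi$ equivalent to that of $\pi''$, while if $\pi_1 \supsetneq \{H_0\}$ the alternative choice $H_1 \in \pi_1 \setminus \{H_0\}$ connects independence of $\pi$ to that of $\pi'$; a short check then yields that any two of the independence assertions imply the third.

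The singleton condition rests on one clean observation: bijectivity of $\R$ restricts, for every Type A flat $X$, to a bijection $\CA_X \setminus \pi_1 \to \CA''_X$. Setting $n_i(X) := |\pi_i \cap \CA_X|$, this yields $|\pi''_i \cap \CA''_X| = n_i(X)$ for all $i \geq 2$. Consequently, $\pi_X$ has a singleton iff some $n_i(X) = 1$ with $i \geq 1$; $\pi''_X$ has a singleton iff some $n_i(X) = 1$ with $i \geq 2$; and when $X \in L(\CA')$, $\pi'_X$ has a singleton iff $n_1(X) = 2$ or some $n_i(X) = 1$ with $i \geq 2$. For Type B flats, $\pi_X = \pi'_X$, so niceness at $X$ transfers directly between $\pi$ and $\pi'$. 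Each of the three implications then reduces to a short Boolean calculation on the integers $n_i(X)$.

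The main obstacle arises in the direction $(i) \wedge (ii) \Rightarrow (iii)$ when $X \in L(\CA'')$ but $X \notin L(\CA')$, since (ii) does not then apply at $X$ itself. For such $X$ I introduce the surrogate flat $Y := \cap(\CA_X \setminus \{H_0\})$; a direct verification using bijectivity of $\R$ gives $Y \in L(\CA')$ and $\CA'_Y = \CA_X \setminus \{H_0\}$, so $\pi'_Y$ has blocks of sizes $n_1(X) - 1$ (where non-empty) and $n_i(X)$ for $i \geq 2$. Applying niceness of $\pi'$ at $Y$ together with niceness of $\pi$ at $X$ then forces some $n_i(X) = 1$ with $i \geq 2$, i.e.\ $\pi''_X$ has a singleton. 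The remaining implications $(i) \wedge (iii) \Rightarrow (ii)$ and $(ii) \wedge (iii) \Rightarrow (i)$ follow from analogous but simpler Boolean manipulations within the same stratification, thereby completing the proof.
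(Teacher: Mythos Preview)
Your proof is correct and follows the same overall strategy as the paper: split the singleton condition into the cases $H_0\in\CA_X$ and $H_0\notin\CA_X$, and in the Restriction direction pass to a surrogate flat when $X\in L(\CA'')$ is not directly accessible in $L(\CA')$. The paper packages the implication $(i)\wedge(ii)\Rightarrow(iii)$ as Proposition~3.4 (Jambu--Paris) and there writes $X=H_0\cap Y$ with $Y=L_1\cap\cdots\cap L_m$ for suitable $L_i\in\CA\setminus\pi_1$, then splits into the subcases $X=Y$ and $X\neq Y$, invoking Corollary~\ref{cor:teraofactored}(iii) (the rank formula for factorizations) in the latter. Your treatment is more uniform and more elementary: the key observation that bijectivity of $\R$ restricts to a bijection $\CA_X\setminus\pi_1\to\CA''_X$ for every Type~A flat lets you encode everything through the block-size counts $n_i(X)$, and your canonical surrogate $Y=\bigcap(\CA_X\setminus\{H_0\})$ together with the identity $\CA'_Y=\CA_X\setminus\{H_0\}$ handles both subcases at once, without appealing to Corollary~\ref{cor:teraofactored}. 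The upshot is that your argument is self-contained and does not rely on Terao's factorization theorem as an input, at the cost of assuming bijectivity of $\R$ throughout (which the paper's Proposition~3.4 actually derives from $(i)$ and $(ii)$); for the present statement, where bijectivity is a standing hypothesis, your route is a genuine streamlining.
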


As indicated above, 
if $\CA$ is supersolvable, then $\CA$ 
satisfies the factorization property 
from \eqref{eq:factoredOS}, so $\CA$ is nice. 
Nevertheless, there is no counterpart of the 
Addition-Deletion Theorem \ref{theorem:add-del-factored} 
for the stronger notion of 
supersolvable arrangements,
see Example \ref{ex:d13}.

By Terao's \emph{Factorization Theorem} \ref{thm:freefactors},
the Poincar\'e polynomial 
of a free arrangement $\CA$
factors into linear terms 
given by the exponents of $\CA$, i.e.\ 
\begin{equation}
\label{eq:pifree}
\pi(\CA,t) = \prod_{i=1}^\ell (1 + b_i t),
\end{equation}
where $\exp \CA = \{b_1, \ldots, b_\ell\}$ are the exponents of $\CA$,
\cite{terao:freefactors}
(cf.\ \cite[Thm.\ 4.137]{orlikterao:arrangements}).
In view of \eqref{eq:piA},
considering both factorizations of the 
Poincar\'e polynomials given in 
\eqref{eq:poinOS} and \eqref{eq:pifree},
it is natural to ask whether every nice arrangement is
free, \cite{terao:factored}.  This however is not the case. It
was observed by Enta, Falk and Ziegler independently
that the $3$-arrangement in characteristic $3$ in  
\cite[Ex.\ 4.1]{ziegler} is factored but not free
(cf.\ note added in proof in \cite{terao:factored}).
Vice versa, a free arrangement need not be factored either, e.g.\ 
the reflection arrangment of 
the Coxeter group of type $D_4$
is free but not factored, see Remark \ref{rem:indfactored}(ii).

Combining Theorem \ref{theorem:add-del-factored} with Terao's
Addition-Deletion Theorem \ref{thm:add-del} for free arrangements, 
we get an
Addition-Deletion Theorem for 
the proper subclass of 
nice and free arrangements.

\begin{theorem}
\label{theorem:add-del-free-factored2}
Suppose that $\CA \ne \Phi_\ell$.
Let $\pi = (\pi_1, \ldots, \pi_s)$ be a partition  of $\CA$.
Let $H_0 \in \pi_1$ and 
let $(\CA, \CA', \CA'')$ be the triple associated with $H_0$. 
Suppose that $\R: \CA \setminus \pi_1 \to \CA''$ 
is bijective. 
Then any two of the following statements imply the third:
\begin{itemize}
\item[(i)] $\pi$ is nice for $\CA$ and $\CA$ is free; 
\item[(ii)] $\pi'$  is nice for $\CA'$ and $\CA'$ is free; 
\item[(iii)] $\pi''$  is nice for $\CA''$ and $\CA''$ is free. 
\end{itemize}
\end{theorem}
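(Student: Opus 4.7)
The plan is to combine Theorem \ref{theorem:add-del-factored}, an addition-deletion principle for nice partitions alone, with Terao's classical Addition-Deletion Theorem \ref{thm:add-del} for freeness alone. Each of (i), (ii), (iii) is a conjunction of a nice-partition assertion and a freeness assertion, so granting any two of (i)--(iii) supplies two nice-partition assertions and two freeness assertions simultaneously.

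First I would apply Theorem \ref{theorem:add-del-factored} to the two available nice-partition assertions, which immediately yields the third. Consequently all of $\pi$, $\pi'$, $\pi''$ are nice for $\CA$, $\CA'$, $\CA''$, and by \eqref{eq:poinOS} together with \eqref{eq:piA} their Poincar\'e polynomials factor linearly with coefficients $|\pi_i|$, $|\pi'_i|$, $|\pi''_i|$, respectively. Bijectivity of $\R$ gives $|\pi''_i| = |\pi_i|$ for $i \geq 2$; by construction $|\pi'_1| = |\pi_1| - 1$ and $|\pi'_i| = |\pi_i|$ for $i \geq 2$ when $|\pi_1| > 1$, while $\pi'$ loses its first block when $|\pi_1| = 1$.

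For the freeness component, I would invoke Terao's Factorization Theorem \ref{thm:freefactors}: whenever one of $\CA$, $\CA'$, $\CA''$ is free, the block sizes of its nice partition are exactly its exponents. The relations above then express $\exp \CA'$ as $\exp \CA$ with a single entry equal to $|\pi_1|$ decremented by one, and $\exp \CA''$ as $\exp \CA$ with that entry removed altogether. This is precisely the exponent matching needed to apply Theorem \ref{thm:add-del} to the two available freeness assertions, and that application delivers the third.

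The main obstacle is the bookkeeping in the corner case $|\pi_1| = 1$: then removing $H_0$ drops the rank, so $\CA'$ has one fewer exponent than $\CA$ and Theorem \ref{thm:add-del} has to be applied under the standard convention of a zero exponent (or, equivalently, by passing to an essential model). Apart from this mild case distinction, the argument is a clean juxtaposition of Theorems \ref{theorem:add-del-factored}, \ref{thm:freefactors}, and \ref{thm:add-del}, with no further combinatorial input required.
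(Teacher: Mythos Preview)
Your proposal is correct and follows essentially the same route as the paper. The paper derives Theorem~\ref{theorem:add-del-free-factored2} as a one-line variation of Theorem~\ref{thm:add-del-free-factored}, noting that the exponent conditions needed for Theorem~\ref{thm:add-del} come from Remark~\ref{rem:exponents}; your argument unpacks exactly this, invoking Theorem~\ref{theorem:add-del-factored} for the nice-partition component and Theorem~\ref{thm:freefactors} (which is what underlies Remark~\ref{rem:exponents}) to identify block sizes with exponents before applying Theorem~\ref{thm:add-del}. The corner case $|\pi_1|=1$ you flag is already absorbed by the $0^{\ell-r}$ convention in \eqref{eq:exp}, so no genuine case distinction is needed.
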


It is worth noting that
in Theorem \ref{theorem:add-del-free-factored2}
we do not need to explicitly impose the 
containment conditions on the sets of exponents of the arrangements involved,
cf.\ Theorem \ref{thm:add-del}. This is a consequence of the
presence of the underlying factorizations along with the 
bijectivity condition on $\R$.
We also note that this condition is necessary, 
see Example \ref{ex:a222}.

Theorems \ref{theorem:add-del-OS},
\ref{theorem:add-del-factored}, and 
\ref{theorem:add-del-free-factored2}
likely prove to be equally viable 
for Orlik-Solomon algebras and 
nice arrangements, as  is
Terao's Addition-Deletion Theorem \ref{thm:add-del} 
for free arrangements.
In Examples \ref{ex:ot454}, \ref{ex:d13}, \ref{ex:g333},
\ref{ex:indfreefactored-notindfactored} 
and \ref{ex:notheredfactored},
we demonstrate the usefulness 
of these results.

Theorem \ref{theorem:add-del-factored} naturally motivates the 
notion of an inductive factorization, due to Jambu and Paris \cite{jambuparis:factored}.
In our setting 
an arrangement $\CA$ is called \emph{inductively factored}
provided 
there exists a partition $\pi$ of $\CA$ and a hyperplane $H_0 \in \pi_1$ 
such that for the triple $(\CA, \CA', \CA'')$ associated with $H_0$ 
the induced partition $\pi'$ of $\CA'$ is an inductive factorization, 
the associated restriction map $\R: \CA \setminus \pi_1 \to \CA''$ 
is bijective and the induced partition 
$\pi''$ of $\CA''$ is an inductive factorization of $\CA''$,
see Definition \ref{def:indfactored}.

In Proposition \ref{prop:indfactoredindfree}
we show that every inductively factored 
arrangement is inductively free.
It follows that an 
inductively factored arrangement admits 
an induction table of inductively free subarrangements.
In Remark \ref{rem:indtable}, we show that an 
inductive factorization 
can be achieved by means of an 
\emph{induction of factorizations} procedure 
in form of an \emph{induction table of factorizations} 
which extends the method of 
induction of hyperplanes for inductively free arrangements.
In Section \ref{subsec:examples}, we 
demonstrate this method
with several examples.

Terao  \cite{terao:factored} showed that 
every supersolvable arrangement is factored, 
see Proposition \ref{prop:ssfactored}.
Indeed, every supersolvable arrangement is inductively factored,
see Proposition \ref{prop:ssindfactored}.
Moreover, Jambu and Paris showed that each inductively factored 
arrangement is inductively free, see Proposition \ref{prop:indfactoredindfree}
(\cite[Prop.\ 2.2]{jambuparis:factored}). 
Each of these classes of arrangements is properly contained in the other,
see Remark \ref{rem:inclusions}.

The paper is organized as follows.
In the next section we recall the required notions of 
free, inductively free, 
supersolvable and nice arrangements 
mostly taken from
\cite{orlikterao:arrangements} and 
\cite{terao:factored}.
In Section \ref{ssect:factored}, we 
recall the main results from \cite{terao:factored}.

In Section \ref{sec:adddel} we prove 
slightly stronger versions of 
Theorems \ref{theorem:add-del-factored} 
and~\ref{theorem:add-del-free-factored2}.

Subsequently, in Section \ref{ssec:indfactoredindfree}, we
introduce the notion of an inductively 
factored arrangement due to Jambu and Paris,
\cite{jambuparis:factored}.
In Proposition \ref{prop:ssindfactored},
we show that every supersolvable arrangement is 
inductively factored, 
and in Proposition \ref{prop:indfactoredindfree}
that every inductively factored 
arrangement is inductively free.
These results in turn are extended to 
hereditarily inductively factored arrangements in 
Section \ref{sec:heredfactored}.

In Remark \ref{rem:indtable}, we introduce the 
concept of induction of factorizations 
for inductively factored arrangements.
In Section \ref{subsec:examples}, we
present applications of our main results.

In Propositions \ref{prop:product-factored}
and \ref{prop:product-indfactored},
we show that factored and
inductively factored arrangements 
are compatible with the product 
construction for arrangements,
as is the case for free,  
inductively free and 
supersolvable arrangements,
see Propositions \ref{prop:product-free},  
\ref{prop:product-indfree},  and 
\cite[Prop.\ 2.6]{hogeroehrle:super}, 
respectively. Moreover, we 
extend this compatibility to 
hereditarily (inductively) factored arrangements 
in Corollary \ref{cor:product-heredindfactored}.

For general information about arrangements 
we refer the reader to \cite{orlikterao:arrangements}.

\section{Recollections and Preliminaries}

\subsection{Hyperplane Arrangements}
\label{ssect:hyper}
Let $V = \BBK^\ell$ 
be an $\ell$-dimensional $\BBK$-vector space.
A \emph{hyperplane arrangement} is a pair
$(\CA, V)$, where $\CA$ is a finite collection of hyperplanes in $V$.
Usually, we simply write $\CA$ in place of $(\CA, V)$.
We write $|\CA|$ for the number of hyperplanes in $\CA$.
The empty arrangement in $V$ is denoted by $\Phi_\ell$.

The \emph{lattice} $L(\CA)$ of $\CA$ is the set of subspaces of $V$ of
the form $H_1\cap \dotsm \cap H_i$ where $\{ H_1, \ldots, H_i\}$ is a subset
of $\CA$. 
For $X \in L(\CA)$, we have two associated arrangements, 
firstly the subarrangement 
$\CA_X :=\{H \in \CA \mid X \subseteq H\} \subseteq \CA$
of $\CA$ and secondly, 
the \emph{restriction of $\CA$ to $X$}, $(\CA^X,X)$, where 
$\CA^X := \{ X \cap H \mid H \in \CA \setminus \CA_X\}$.
Note that $V$ belongs to $L(\CA)$
as the intersection of the empty 
collection of hyperplanes and $\CA^V = \CA$. 
The lattice $L(\CA)$ is a partially ordered set by reverse inclusion:
$X \le Y$ provided $Y \subseteq X$ for $X,Y \in L(\CA)$.

If $0 \in H$ for each $H$ in $\CA$, then 
$\CA$ is called \emph{central}.
If $\CA$ is central, then the \emph{center} 
$T_\CA := \cap_{H \in \CA} H$ of $\CA$ is the unique
maximal element in $L(\CA)$  with respect
to the partial order.
We have a \emph{rank} function on $L(\CA)$: $r(X) := \codim_V(X)$.
The \emph{rank} $r := r(\CA)$ of $\CA$ 
is the rank of a maximal element in $L(\CA)$.
The $\ell$-arrangement $\CA$ is \emph{essential} 
provided $r(\CA) = \ell$.
If $\CA$ is central and essential, then $T_\CA =\{0\}$.
Throughout, we only consider central arrangements.

The \emph{Poincar\'e polynomial} $\pi(\CA,t) \in \BBZ[t]$ 
of $\CA$ is defined by 
\[
\pi(\CA,t) := \sum_{X \in L(\CA)} \mu(X)(-t)^{r(X)},
\]
where $\mu$ is the M\"obius function of $L(\CA)$, 
see \cite[Def.\ 2.48]{orlikterao:arrangements}.

Let $S = S(V^*)$ be the symmetric algebra of the dual space $V^*$ of $V$.
If $x_1, \ldots , x_\ell$ is a basis of $V^*$, then we identify $S$ with 
the polynomial ring $\BBK[x_1, \ldots , x_\ell]$.
For $H \in \CA$ we fix $\alpha_H \in V^*$ with $H = \ker \alpha_H$.
The \emph{defining polynomial} $Q(\CA)$ of $\CA$ is given by 
$Q(\CA) := \prod_{H \in \CA} \alpha_H \in S$.

For $\CA \ne \Phi_\ell$, 
let $H_0 \in \CA$.
Define $\CA' := \CA \setminus\{ H_0\}$,
and $\CA'' := \CA^{H_0} = \{ H_0 \cap H \mid H \in \CA'\}$.
Then $(\CA, \CA', \CA'')$ is a \emph{triple} of arrangements,
\cite[Def.\ 1.14]{orlikterao:arrangements}.

The \emph{product}
$\CA = (\CA_1 \times \CA_2, V_1 \oplus V_2)$ 
of two arrangements $(\CA_1, V_1), (\CA_2, V_2)$
is defined by
\begin{equation}
\label{eq:product}
\CA = \CA_1 \times \CA_2 := \{H_1 \oplus V_2 \mid H_1 \in \CA_1\} \cup 
\{V_1 \oplus H_2 \mid H_2 \in \CA_2\},
\end{equation}
see \cite[Def.\ 2.13]{orlikterao:arrangements}.
In particular,  
$|\CA| = |\CA_1| + |\CA_2|$. 

Let $\CA = \CA_1 \times \CA_2$ be a product. 
By  \cite[Prop.\ 2.14]{orlikterao:arrangements},
there is a lattice isomorphism
\begin{equation}
\label{eq:latticesum}
 L(\CA_1) \times L(\CA_2) \cong L(\CA) \quad \text{by} \quad
(X_1, X_2) \mapsto X_1 \oplus X_2.
\end{equation}
Using \eqref{eq:product}, it is easily seen that
for $X =  X_1 \oplus X_2 \in L(\CA)$, we have 
\begin{equation}
\label{eq:productAX}
\CA _X =  ({\CA_1})_{X_1} \times ({\CA_2})_{X_2}
\end{equation} and 
\begin{equation}
\label{eq:restrproduct}
\CA^X = \CA_1^{X_1} \times \CA_2^{X_2}.
\end{equation}

\subsection{Free  and inductively free Arrangements}
\label{ssect:free}

Free arrangements play a crucial role in the theory of arrangements;
see \cite[\S 4]{orlikterao:arrangements} for the definition and 
basic properties. If $\CA$ is free, then 
we can associate with $\CA$ the multiset of its \emph{exponents}, 
denoted $\exp \CA$. 

Owing to \cite[Prop.\ 4.28]{orlikterao:arrangements}, 
free arrangements behave well with respect to 
the  product construction.

\begin{proposition}
\label{prop:product-free}
Let $\CA_1, \CA_2$ be two arrangements.
Then  $\CA = \CA_1 \times \CA_2$ is free
if and only if both 
$\CA_1$ and $\CA_2$ are free and in that case
$\exp \CA = \{\exp \CA_1, \exp \CA_2\}$.
\end{proposition}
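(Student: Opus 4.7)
The plan is to analyze the module of $\BBK$-derivations $D(\CA) \subseteq \Der(S)$ under the product decomposition and exploit the separation of variables. Choose bases so that $V_1^*$ has coordinates $x_1, \ldots, x_p$ and $V_2^*$ has coordinates $y_1, \ldots, y_q$, and put $S_1 = \BBK[x_1, \ldots, x_p]$, $S_2 = \BBK[y_1, \ldots, y_q]$, $S = S_1 \otimes_\BBK S_2$. From \eqref{eq:product}, each hyperplane of $\CA_1 \times \CA_2$ is defined by a linear form lying entirely in $S_1$ or entirely in $S_2$, so $Q(\CA) = Q_1 Q_2$ with $Q_1 := Q(\CA_1) \in S_1$ and $Q_2 := Q(\CA_2) \in S_2$; in particular $Q_1$ and $Q_2$ are coprime in $S$.

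The first key step is to establish a direct sum decomposition
\[
D(\CA) \;=\; D_x(\CA) \;\oplus\; D_y(\CA),
\]
where $D_x(\CA) := D(\CA) \cap \bigoplus_i S\,\partial_{x_i}$ and $D_y(\CA)$ is defined symmetrically. Writing $\theta = \theta_x + \theta_y \in \Der(S)$ and using $\theta_x(Q_2) = 0 = \theta_y(Q_1)$, one obtains $\theta(Q) = Q_2\, \theta_x(Q_1) + Q_1\, \theta_y(Q_2)$; the condition $Q_1 Q_2 \mid \theta(Q)$ combined with $\gcd(Q_1,Q_2)=1$ then forces $Q_1 \mid \theta_x(Q_1)$ and $Q_2 \mid \theta_y(Q_2)$ separately. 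Next I would identify $D_x(\CA) \cong D(\CA_1) \otimes_{S_1} S$ by expanding the coefficients of an element of $D_x(\CA)$ in the $S_1$-basis of $S$ given by monomials in the $y_j$: since $Q_1 \in S_1$, the divisibility $\theta_x(Q_1) \in Q_1 S$ decouples across this basis into the requirement that each $S_1$-component of $\theta_x$ lie in $D(\CA_1)$. Symmetrically, $D_y(\CA) \cong D(\CA_2) \otimes_{S_2} S$.

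Given this decomposition the reverse direction is immediate via Saito's criterion: take homogeneous bases of $D(\CA_1)$ and $D(\CA_2)$, extend each derivation by zero on the complementary coordinates, and assemble them into a candidate basis of $D(\CA)$; the associated coefficient matrix is block diagonal with block determinants equal, up to nonzero scalars, to $Q_1$ and $Q_2$, hence total determinant $c\cdot Q(\CA)$, and the exponents of $\CA$ appear as the multiset union of those of $\CA_1$ and $\CA_2$. For the forward direction, freeness of $D(\CA)$ together with the direct sum decomposition forces each summand $D(\CA_i) \otimes_{S_i} S$ to be free over $S$; then I would descend freeness along the faithfully flat graded extension $S_i \hookrightarrow S$. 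The main obstacle is this descent step: the cleanest route is graded Nakayama, lifting a homogeneous $S/S_+$-basis of $(D(\CA_i)\otimes_{S_i}S)/(S_i)_+(D(\CA_i)\otimes_{S_i}S)$ back to a basis of the finitely generated graded $S_i$-module $D(\CA_i)$, which is standard but is the only point where one must work rather than simply transport structure across the product.
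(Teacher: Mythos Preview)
The paper does not prove this proposition; it is quoted from \cite[Prop.\ 4.28]{orlikterao:arrangements} without argument. Your proposal is essentially the standard proof given there: one splits $\Der(S)$ along the two sets of variables, observes that the defining conditions for $D(\CA)$ decouple because every $\alpha_H$ lies in $S_1$ or in $S_2$, obtains $D(\CA)\cong (D(\CA_1)\otimes_{S_1}S)\oplus(D(\CA_2)\otimes_{S_2}S)$, and then runs Saito's criterion on the block-diagonal coefficient matrix for the reverse implication.

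Your forward direction is also fine. The step ``direct summand of free is free'' is harmless in the graded setting (graded projective over a positively graded connected $\BBK$-algebra is graded free), and the descent of freeness along $S_i\hookrightarrow S$ is likewise routine: $S$ is free over $S_i$, hence faithfully flat, so flatness of $D(\CA_i)\otimes_{S_i}S$ over $S$ descends to flatness of $D(\CA_i)$ over $S_i$, and a finitely generated graded flat module over $S_i$ is graded free. Your Nakayama sketch achieves the same thing; just be careful to mod out by $S_+$ (not $(S_i)_+$) when applying graded Nakayama on the $S$-side, since $(D(\CA_i)\otimes_{S_i}S)/S_+(D(\CA_i)\otimes_{S_i}S)\cong D(\CA_i)/(S_i)_+D(\CA_i)$ is exactly the $\BBK$-vector space whose basis you lift. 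With that minor clarification the argument is complete and matches the literature.
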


Terao's celebrated \emph{Addition-Deletion Theorem} 
\cite{terao:freeI} plays a 
fundamental role in the study of free arrangements, 
\cite[Thm.\ 4.51]{orlikterao:arrangements}.

\begin{theorem}
\label{thm:add-del}
Suppose that $\CA \ne \Phi_\ell$.
Let  $(\CA, \CA', \CA'')$ be a triple of arrangements. Then any 
two of the following statements imply the third:
\begin{itemize}
\item[(i)] $\CA$ is free with $\exp \CA = \{ b_1, \ldots , b_{\ell -1}, b_\ell\}$;
\item[(ii)] $\CA'$ is free with $\exp \CA' = \{ b_1, \ldots , b_{\ell -1}, b_\ell-1\}$;
\item[(iii)] $\CA''$ is free with $\exp \CA'' = \{ b_1, \ldots , b_{\ell -1}\}$.
\end{itemize}
\end{theorem}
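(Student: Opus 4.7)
The proof rests on the module of logarithmic derivations and Saito's criterion. Recall that $D(\CA) = \{\theta \in \Der_\BBK(S) \mid \theta(\alpha_H) \in S\alpha_H \text{ for every } H \in \CA\}$ is a graded $S$-module, and $\CA$ is free with exponents $\{b_1,\ldots,b_\ell\}$ exactly when $D(\CA)$ admits a homogeneous $S$-basis $\theta_1,\ldots,\theta_\ell$ with $\deg \theta_i = b_i$; equivalently, by Saito's criterion, when there exist homogeneous $\theta_1,\ldots,\theta_\ell \in D(\CA)$ with $\sum \deg \theta_i = |\CA|$ and $\det(\theta_i(x_j)) \in \BBK^\times \cdot Q(\CA)$.

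The key technical input is the fundamental short exact sequence of graded $S$-modules
\begin{equation*}
0 \longrightarrow D(\CA) \stackrel{\iota}{\longrightarrow} D(\CA') \stackrel{\pi}{\longrightarrow} D(\CA'')(-1),
\end{equation*}
where $\iota$ is the evident inclusion (the defining condition for $D(\CA)$ differs from that of $D(\CA')$ only by the extra requirement $\theta(\alpha_0)\in S\alpha_0$, with $\alpha_0 := \alpha_{H_0}$), and $\pi$ is given by restricting a logarithmic derivation to the hyperplane $H_0$. The first thing I would carry out is a careful verification that $\pi$ is well-defined: for $\theta \in D(\CA')$, the restricted derivation $\bar\theta$ on $S/\alpha_0 S$ kills each defining form $\overline{\alpha_H}$ of a hyperplane $H \cap H_0 \in \CA''$ up to a multiple of that form, because $\theta(\alpha_H) \in S\alpha_H$ already in $S$. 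The grading shift $(-1)$ arises because any $\theta \in \ker \pi$ satisfies $\theta(\alpha_0) \in \alpha_0 S$, so factoring out $\alpha_0$ from this relation costs one in degree.

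With this sequence in hand, the three implications become near-parallel Saito-criterion verifications, together with the numerical identity $|\CA| = |\CA'| + 1$ and $|\CA''| \le |\CA'|$. For $(i) + (iii) \Rightarrow (ii)$, I would lift a basis of $D(\CA'')$ with exponents $\{b_1,\ldots,b_{\ell-1}\}$ through $\pi$ to elements of $D(\CA')$ of degrees $\{b_1+1,\ldots,b_{\ell-1}+1\}$, complement by a suitable derivation coming from the given basis of $D(\CA)$ (removing an $\alpha_0$-factor from the distinguished generator of degree $b_\ell$), and then verify Saito's determinantal identity against $Q(\CA') = Q(\CA)/\alpha_0$. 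The implication $(i) + (ii) \Rightarrow (iii)$ proceeds by pushing a basis of $D(\CA')$ forward through $\pi$ and identifying the image as a basis of $D(\CA'')$ via Saito applied to $Q(\CA'') = Q(\CA'')$; here the degree bookkeeping forces exactly the dropped exponent $b_\ell - 1$ to correspond to the kernel of $\pi$. Finally $(ii) + (iii) \Rightarrow (i)$ lifts a basis of $D(\CA'')$ and multiplies by $\alpha_0$, again checking Saito.

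The main obstacle is the exactness of the central sequence on the right, i.e.\ that $\pi$ is surjective once any two freeness hypotheses are in force; this is where the addition-deletion phenomenon genuinely takes place, and it is precisely here that the exponent-containment conditions $\exp \CA'' \subseteq \exp \CA$ and $\exp \CA'' \subseteq \exp \CA'$ (built into the statement) are needed to match gradings and make the lifts land in the correct Hilbert-series slot. The remaining steps are essentially bookkeeping with graded modules and determinants, but without this surjectivity the entire argument collapses, so I would invest the bulk of the technical effort in establishing it cleanly using the sequence above combined with the Hilbert-series identity $\Poin(D(\CA'),t) = \Poin(D(\CA),t) + t \cdot \Poin(D(\CA''),t)$ that it forces once exactness is known.
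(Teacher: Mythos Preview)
The paper does not prove this theorem. It is Terao's classical Addition--Deletion Theorem, quoted from \cite{terao:freeI} (cf.\ \cite[Thm.~4.51]{orlikterao:arrangements}) in the preliminaries section as background for the paper's own results on nice arrangements. So there is no in-paper argument to compare against; your sketch should be measured against the standard proof in those references.

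Your overall strategy---logarithmic derivation modules, Saito's criterion, and an exact sequence linking $D(\CA)$, $D(\CA')$, $D(\CA'')$---is indeed the classical one. But the specific sequence you write down is wrong, and the error is not cosmetic. The inclusion $\iota\colon D(\CA)\hookrightarrow D(\CA')$ is fine, but your restriction map $\pi\colon D(\CA')\to D(\CA'')$ is \emph{not well-defined}: a derivation $\theta\in\Der_\BBK(S)$ descends to $\bar S = S/\alpha_0 S$ only when $\theta(\alpha_0)\in\alpha_0 S$, and for $\theta\in D(\CA')$ there is no such constraint, precisely because $H_0\notin\CA'$. Your one-line ``careful verification'' that $\bar\theta\in D(\CA'')$ silently assumes $\theta$ already passes to $\bar S$, which is exactly the missing hypothesis. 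Consequently the cokernel $D(\CA')/D(\CA)$ is not $D(\CA'')$ with any grading shift.

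The sequence that actually does the work has $\CA$ and $\CA'$ in the opposite positions:
\[
0 \longrightarrow D(\CA') \xrightarrow{\ \cdot\,\alpha_0\ } D(\CA) \xrightarrow{\ \pi\ } D(\CA''),
\]
with the first map multiplication by $\alpha_0$ (so of degree $+1$) and $\pi$ the restriction to $H_0$, now well-defined because every $\theta\in D(\CA)$ satisfies $\theta(\alpha_0)\in\alpha_0 S$ by definition. Exactness at the middle is the computation $\ker\pi = \alpha_0\cdot D(\CA')$. With this corrected sequence the three implications proceed essentially as you outline---lifting or restricting bases and checking Saito's determinantal criterion against $Q(\CA)=\alpha_0\,Q(\CA')$---but with the roles of $\CA$ and $\CA'$ interchanged throughout your base-change arguments. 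As written, your lifts and pushforwards go in the wrong direction and the degree-shift justification (``factoring out $\alpha_0$ \ldots costs one in degree'') does not correspond to any map in your sequence.
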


Terao's \emph{Factorization Theorem}
\cite{terao:freefactors} shows
that the Poincar\'e polynomial 
of a free arrangement $\CA$
factors into linear terms 
given by the exponents of $\CA$
(cf.\ \cite[Thm.\ 4.137]{orlikterao:arrangements}):

\begin{theorem}
\label{thm:freefactors}
Suppose that 
$\CA$ is free with $\exp \CA = \{ b_1, \ldots , b_\ell\}$.
Then 
\[
\pi(\CA,t) = \prod_{i=1}^\ell (1 + b_i t).
\]
\end{theorem}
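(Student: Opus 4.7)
The plan is to reduce the claim to the analogous factorization of the characteristic polynomial $\chi(\CA,t) := \sum_{X \in L(\CA)} \mu(X) t^{\dim X}$. Since $r(X) = \ell - \dim X$, a short bookkeeping check shows the universal identity $\pi(\CA,t) = (-t)^\ell \chi(\CA, -t^{-1})$ for every central $\ell$-arrangement. Thus it suffices to establish
\[
\chi(\CA,t) \;=\; \prod_{i=1}^\ell (t - b_i)
\]
whenever $\CA$ is free with $\exp\CA = \{b_1,\ldots,b_\ell\}$; the sought-after factorization of $\pi(\CA,t)$ then follows by direct substitution.

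For this I would invoke the Solomon--Terao formula, which for every central arrangement $\CA$ in $V = \BBK^\ell$ expresses $\chi(\CA,t)$ as a limit of an alternating sum
\[
\chi(\CA,t) \;=\; \lim_{x \to 1}\sum_{p=0}^\ell H\bigl(D^p(\CA);\,x\bigr)\,P_p(x,t),
\]
where $H(M;x)$ denotes the Hilbert series of a graded $S$-module $M$, $D^p(\CA)$ is the $S$-module of logarithmic $p$-derivations along $\CA$ (so $D^0(\CA)=S$ and $D^1(\CA) = D(\CA)$), and the $P_p(x,t)$ are explicit coefficient functions rational in $x$ and polynomial in $t$. The formula is itself established by an induction on $|\CA|$ based on the deletion--restriction sequences relating $D^p(\CA)$, $D^p(\CA')$, and $D^{p-1}(\CA'')$ on a triple $(\CA,\CA',\CA'')$, together with the Möbius-theoretic deletion--restriction identity for $\chi$.

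Now suppose $\CA$ is free with exponents $b_1,\ldots,b_\ell$. Then $D(\CA)$ admits a homogeneous $S$-basis $\theta_1,\ldots,\theta_\ell$ with $\deg \theta_i = b_i$, and by passing to exterior powers each $D^p(\CA)$ is also a free $S$-module with Hilbert series
\[
H\bigl(D^p(\CA); x\bigr) \;=\; \frac{e_p\bigl(x^{b_1},\ldots,x^{b_\ell}\bigr)}{(1-x)^\ell},
\]
where $e_p$ is the $p$-th elementary symmetric polynomial. Substituting these Hilbert series into the Solomon--Terao formula, cancelling the common denominator $(1-x)^\ell$ against the vanishing factors in $P_p(x,t)$, and expanding $x^{b_i} = 1 - b_i(1-x) + O\bigl((1-x)^2\bigr)$ around $x=1$ reduces the limit to a polynomial identity in $t$ that one recognises as $\prod_{i=1}^\ell (t - b_i)$. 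Combined with the reduction in the first paragraph, this completes the argument.

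The principal obstacle is the Solomon--Terao formula itself: producing the requisite short exact sequences for $D^p(\CA)$ on a triple and tracking Hilbert series through the induction is the technical heart of the proof. Once that formula is granted, the derivation of the factorization $\prod(t-b_i)$ in the free case is a purely formal manipulation of rational functions and Taylor expansions at $x=1$. An alternative route would be to induct on $|\CA|$ via Theorem~\ref{thm:add-del} combined with the deletion--restriction identity $\pi(\CA,t) = \pi(\CA',t) + t\,\pi(\CA'',t)$, but this would only cover \emph{inductively} free arrangements and therefore does not suffice in the generality of the statement.
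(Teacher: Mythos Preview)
The paper does not supply a proof of Theorem~\ref{thm:freefactors}: it is quoted from Terao \cite{terao:freefactors} (with a pointer to \cite[Thm.~4.137]{orlikterao:arrangements}) as background material, and no argument is given here. So there is nothing in the paper to compare your proposal against.

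That said, your outline is essentially the standard proof found in the cited references. In \cite{orlikterao:arrangements} the Solomon--Terao formula is set up with the modules $\Omega^p(\CA)$ of logarithmic $p$-forms rather than with $D^p(\CA)$, but the two formulations are dual and the mechanics are the same: one writes $\chi(\CA,t)$ (equivalently $\pi(\CA,t)$) as a specialization at $x=1$ of a generating function built from the Hilbert series of these modules, and in the free case those Hilbert series are computable because the modules are free over $S$ with explicitly known generator degrees. One step you pass over is that freeness of $D(\CA)$ forces $D^p(\CA)$ (respectively $\Omega^p(\CA)$) to coincide with the $p$-th exterior power over $S$ of $D(\CA)$ (respectively $\Omega^1(\CA)$); this is true but is itself a lemma, not a tautology. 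Your final remark---that an induction through Theorem~\ref{thm:add-del} and the deletion--restriction identity for $\pi$ would only reach inductively free arrangements---is exactly right and is the reason the Solomon--Terao machinery is unavoidable for the full statement.
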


Theorem \ref{thm:add-del} motivates the notion of an 
\emph{inductively free} arrangement,   
\cite[Def.\ 4.53]{orlikterao:arrangements}.

\begin{defn}
\label{def:indfree}
The class $\CIF$ of \emph{inductively free} arrangements 
is the smallest class of arrangements subject to
\begin{itemize}
\item[(i)] $\Phi_\ell \in \CIF$ for each $\ell \ge 0$;
\item[(ii)] if there exists a hyperplane $H_0 \in \CA$ such that both
$\CA'$ and $\CA''$ belong to $\CIF$, and $\exp \CA '' \subseteq \exp \CA'$, 
then $\CA$ also belongs to $\CIF$.
\end{itemize}
\end{defn}

In \cite[Prop.\ 2.10]{hogeroehrle:indfree},
we showed that the compatibility 
of products with free arrangements
from Proposition \ref{prop:product-free}
restricts to inductively free arrangements.

\begin{proposition}
\label{prop:product-indfree}
Let $\CA_1, \CA_2$ be two arrangements.
Then  $\CA = \CA_1 \times \CA_2$ is inductively free
if and only if both 
$\CA_1$ and $\CA_2$ are inductively free and in that case
$\exp \CA = \{\exp \CA_1, \exp \CA_2\}$.
\end{proposition}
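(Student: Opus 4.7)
My plan is to prove both implications by induction on $n = |\CA_1| + |\CA_2| = |\CA|$. The base case $n = 0$ is trivial: both $\CA_i = \Phi_{\ell_i}$ are inductively free by Definition \ref{def:indfree}(i), as is the product $\Phi_{\ell_1+\ell_2} = \CA_1 \times \CA_2$. Throughout the induction I will use the following two bookkeeping identities, both immediate from \eqref{eq:product} and \eqref{eq:restrproduct}: for $H \in \CA_1$ and $\tilde H := H \oplus V_2 \in \CA_1 \times \CA_2$,
\[
(\CA_1 \times \CA_2) \setminus \{\tilde H\} = \CA_1' \times \CA_2, \qquad (\CA_1 \times \CA_2)^{\tilde H} = \CA_1^H \times \CA_2 = \CA_1'' \times \CA_2,
\]
together with the symmetric statements for $H \in \CA_2$. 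This reduces every deletion/restriction step performed on $\CA$ to a product of strictly fewer hyperplanes, so the inductive hypothesis applies.

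For the ``if'' direction, assume $\CA_1, \CA_2 \in \CIF$. If both are empty, we are done by the base case. Otherwise at least one factor is nonempty; up to swapping, pick $H \in \CA_1$ witnessing inductive freeness of $\CA_1$, so $\CA_1', \CA_1'' \in \CIF$ and $\exp\CA_1'' \subseteq \exp\CA_1'$. By induction, $\CA_1' \times \CA_2$ and $\CA_1'' \times \CA_2$ lie in $\CIF$, and Proposition \ref{prop:product-free} gives their exponents as $\{\exp\CA_1', \exp\CA_2\}$ and $\{\exp\CA_1'', \exp\CA_2\}$ respectively, so the containment lifts to the exponent sets of the two products. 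Hence $\tilde H = H \oplus V_2$ witnesses inductive freeness of $\CA_1 \times \CA_2$.

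For the ``only if'' direction, assume $\CA_1 \times \CA_2 \in \CIF$ (nonempty) and let $H_0$ be a hyperplane fulfilling Definition \ref{def:indfree}(ii). By \eqref{eq:product}, $H_0$ lies in one of the two canonical families; without loss of generality $H_0 = H \oplus V_2$ for some $H \in \CA_1$. The identities above rewrite the deletion and restriction as $\CA_1' \times \CA_2$ and $\CA_1'' \times \CA_2$, both strictly smaller products belonging to $\CIF$. The induction hypothesis therefore yields $\CA_1', \CA_1'', \CA_2 \in \CIF$. Proposition \ref{prop:product-free} computes the exponents of both smaller products and, cancelling the common multiset $\exp\CA_2$ from the hypothesised exponent containment $\exp(\CA_1'' \times \CA_2) \subseteq \exp(\CA_1' \times \CA_2)$, gives $\exp\CA_1'' \subseteq \exp\CA_1'$. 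Thus $H$ witnesses the inductive freeness of $\CA_1$, and the final exponent formula is then Proposition \ref{prop:product-free}. The main obstacle is precisely this cancellation step in the reverse direction: one must be careful that ``$\subseteq$'' is understood as containment of multisets so that the common summand $\exp\CA_2$ can be removed unambiguously; this is why the exponent formula from Proposition \ref{prop:product-free} has to be invoked for the smaller products before the inductive freeness of $\CA_1$ can be concluded.
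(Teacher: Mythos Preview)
The paper does not actually prove Proposition~\ref{prop:product-indfree}; it is quoted from \cite[Prop.~2.10]{hogeroehrle:indfree}. So there is no ``paper's own proof'' to compare against here. That said, your argument is correct and is precisely the expected one: induction on $|\CA|$, together with the product identities for deletion and restriction, is exactly the template the present paper follows in its proof of the analogous Proposition~\ref{prop:product-indfactored} for inductively factored arrangements.

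Your treatment of the one nontrivial point---cancelling the common multiset $\exp\CA_2$ from the containment $\{\exp\CA_1'',\exp\CA_2\}\subseteq\{\exp\CA_1',\exp\CA_2\}$---is fine: for multisets, $A\uplus C\subseteq B\uplus C$ implies $A\subseteq B$ by comparing multiplicities elementwise. It is worth making explicit (as you effectively do) that Proposition~\ref{prop:product-free} may be applied to the smaller products only after the induction hypothesis has established that the factors $\CA_1',\CA_1'',\CA_2$ are (inductively) free; otherwise the exponent formula is not available. With that caveat, nothing is missing.
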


Following \cite[Def.\ 4.140]{orlikterao:arrangements}, 
we say that $\CA$ is \emph{hereditarily free}
provided $\CA^X$ is free for every $X \in L(\CA)$.
In general, a free arrangement need not be hereditarily free,
\cite[Ex.\ 4.141]{orlikterao:arrangements}.

\subsection{Supersolvable Arrangements}
\label{ssect:super}

Let $\CA$ be an arrangement.
Following \cite[\S 2]{orlikterao:arrangements}, we say
that $X \in L(\CA)$ is \emph{modular}
provided $X + Y \in L(\CA)$ for every $Y \in L(\CA)$, 
cf.\ \cite[Def.\ 2.32, Cor.\ 2.26]{orlikterao:arrangements}.
The following notion is due to Stanley \cite{stanley:super}. 

\begin{defn}
\label{def:super}
Let $\CA$ be a central arrangement of rank $r$.
We say that $\CA$ is \emph{supersolvable} 
provided there is a maximal chain
\[
V = X_0 < X_1 < \ldots < X_{r-1} < X_r = T_\CA
\]
of modular elements $X_i$ in $L(\CA)$.
\end{defn}

\begin{remark}
\label{rem:2-arr}
By \cite[Ex.\ 2.28]{orlikterao:arrangements}, 
$V$, $T_\CA$ and the members in $\CA$ 
are always modular in $L(\CA)$.
It follows that all $0$- $1$-, and $2$-arrangements are supersolvable.
\end{remark}

Note that supersolvable arrangements
are inductively free, 
\cite[Thm.\ 4.58]{orlikterao:arrangements}.

\subsection{Nice Arrangements}
\label{ssect:factored}

The notion of a \emph{nice} or \emph{factored} 
arrangement goes back to Terao \cite{terao:factored}.
It generalizes the concept of a supersolvable arrangement, see
Proposition \ref{prop:ssfactored}.
We recall the relevant notions and results from \cite{terao:factored} 
(cf.\  \cite[\S 2.3]{orlikterao:arrangements}).

\begin{defn}
\label{def:independent}
Let $\pi = (\pi_1, \ldots , \pi_s)$ be a partition of $\CA$.
Then $\pi$ is called \emph{independent}, provided 
for any choice $H_i \in \pi_i$ for $1 \le i \le s$,
the resulting $s$ hyperplanes are linearly independent, i.e.\
$r(H_1 \cap \ldots \cap H_s) = s$.
\end{defn}

\begin{defn}
\label{def:indpart}
Let $\pi = (\pi_1, \ldots , \pi_s)$ be a partition of $\CA$
and let $X \in L(\CA)$.
The \emph{induced partition} $\pi_X$ of $\CA_X$ is given by the non-empty 
blocks of the form $\pi_i \cap \CA_X$.
\end{defn}

\begin{defn}
\label{def:factored}
The partition 
$\pi$ of $\CA$ is
\emph{nice} for $\CA$ or a \emph{factorization} of $\CA$  provided 
\begin{itemize}
\item[(i)] $\pi$ is independent, and 
\item[(ii)] for each $X \in L(\CA) \setminus \{V\}$, the induced partition $\pi_X$ admits a block 
which is a singleton. 
\end{itemize}
If $\CA$ admits a factorization, then we also say that $\CA$ is \emph{factored} or \emph{nice}.
\end{defn}

\begin{remark}
\label{rem:factored}
(i). 
Vacuously, the empty partition is nice for the empty arrangement $\Phi_\ell$.

(ii).
If $\CA \ne \Phi_\ell$,  
$\pi$ is a nice partition of $\CA$
and $X \in L(\CA)\setminus\{V\}$, then the non-empty parts of the 
induced partition $\pi_X$ in turn form a nice partition of $\CA_X$;
cf.~the proof of \cite[Cor.\ 2.11]{terao:factored}

(iii). 
Since the singleton condition in Definition \ref{def:factored}(ii)
also applies to the center $T_\CA$ of $L(\CA)$, 
a factorization $\pi$ of $\CA \ne \Phi_\ell$
always admits a singleton as one of its parts.
Also note that for a hyperplane, 
the singleton condition trivially holds.

(iv).
Usually, when $\CA$ is factored, there is more than one 
nice partition. However, there are instances
when $\CA$ admits a unique nice partition, 
see Example \ref{ex:indfreefactored-notindfactored}.
\end{remark}

We recall the main results from \cite{terao:factored} 
(cf.\  \cite[\S 3.3]{orlikterao:arrangements}) that motivated 
Definition \ref{def:factored}.

\begin{theorem}
\label{thm:teraofactored}
Let $\CA$ be a central $\ell$-arrangement and let 
$\pi = (\pi_1, \ldots, \pi_s)$ be a partition of $\CA$.
Then the $\BBK$-linear map $\kappa$ 
defined in \eqref{eq:factoredOS}
is an isomorphism of graded $\BBK$-vector spaces
if and only if $\pi$ is nice for $\CA$.
\end{theorem}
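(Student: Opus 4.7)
The plan is to establish both implications by combining Brieskorn's decomposition of $A(\CA)$ with a multiplicative formula for the M\"obius function of $L(\CA)$ that niceness of $\pi$ is designed to produce. First I would record the Poincar\'e polynomial on each side: Brieskorn's theorem gives $A(\CA) = \bigoplus_{X \in L(\CA)} A(\CA)_X$, where $A(\CA)_X$ is spanned by products $a_{H_1} \cdots a_{H_k}$ of linearly independent hyperplanes with $H_1 \cap \cdots \cap H_k = X$, is concentrated in degree $r(X)$, and has dimension $|\mu(X)|$; hence $\Poin(A(\CA), t) = \sum_X |\mu(X)| t^{r(X)} = \pi(\CA, t)$ by \eqref{eq:piA}, while the domain of $\kappa$ visibly has Poincar\'e polynomial $\prod_{i=1}^s (1 + |\pi_i| t)$. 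The theorem therefore reduces to showing that $\kappa$ is surjective and that these two polynomials agree precisely when $\pi$ is nice.

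For the implication $(\Leftarrow)$, assume $\pi$ is nice. I would prove by induction on $r(X)$ the multiplicative formula
$$|\mu(X)| = \prod_{i \in I(X)} |\pi_i \cap \CA_X|, \qquad I(X) := \{i : \pi_i \cap \CA_X \neq \emptyset\}.$$
At each $X \neq V$, Remark \ref{rem:factored}(ii) gives that $\pi_X$ is nice for $\CA_X$ and Remark \ref{rem:factored}(iii) produces a singleton $\{H^*\} \in \pi_X$; the M\"obius recurrence $\sum_{Y \leq X} \mu(Y) = 0$, combined with the inductive hypothesis on lattice elements strictly below $X$ in $L(\CA_X)$, then pins down $|\mu(X)|$. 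Summing $|\mu(X)| t^{r(X)}$ over $L(\CA)$ and regrouping by $I(X) \subseteq \{1, \ldots, s\}$, using independence of $\pi$ to identify each $X$ with the intersection of a chosen tuple of hyperplanes, recovers $\prod_i (1 + |\pi_i| t)$. Surjectivity of $\kappa$ then follows from the Orlik--Solomon boundary relations: any monomial using two hyperplanes from a single $\pi_i$ harbours a dependent subset whose defining boundary relation rewrites the monomial as a sum of products with strictly fewer intra-part repetitions. A surjection of graded vector spaces with identical Poincar\'e polynomials is an isomorphism.

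For the implication $(\Rightarrow)$, suppose $\kappa$ is an isomorphism. Matching top degrees forces $s = r(\CA)$, and injectivity on the top graded piece shows that $a_{H_1} \cdots a_{H_s} \neq 0$ in $A(\CA)$ for every choice $H_i \in \pi_i$, whence $\pi$ is independent. For the singleton condition at $X \neq V$, I would consider the inclusion $A(\CA_X) \hookrightarrow A(\CA)$ together with the restricted map $\kappa_X$ for $\pi_X$; via Brieskorn, $A(\CA_X) = \bigoplus_{Y \leq X} A(\CA)_Y$ sits naturally inside $A(\CA)$, and a degree-by-degree dimension comparison forces $\kappa_X$ to be an isomorphism. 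An induction on $|\CA|$ applying the already established forward direction to $\CA_X$ then shows $\pi_X$ is nice, and in particular contains a singleton.

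The hard part will be the multiplicative M\"obius formula in the forward direction; the remaining ingredients are either formal (dimension matching upgrades surjectivity to isomorphism) or standard reductions using the Orlik--Solomon ideal. A secondary subtlety is arranging the two inductions — on $r(\CA)$ for the global polynomial identity and on $r(X)$ for the local M\"obius formula inside each $\CA_X$ — so that they interlock without circularity, but the presence of a singleton in every nontrivial $\pi_X$ supplies the needed foothold at each lattice element.
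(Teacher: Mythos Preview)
The paper does not prove this theorem: it is stated in Section~\ref{ssect:factored} as a result quoted from Terao's original paper \cite{terao:factored} (see also \cite[Thm.~3.87]{orlikterao:arrangements}), so there is no proof here against which to compare your proposal.

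That said, your outline is broadly in the spirit of Terao's own argument --- the multiplicative formula $|\mu(X)| = \prod_{i \in I(X)} |\pi_i \cap \CA_X|$ is indeed the engine of the $(\Leftarrow)$ direction, and Brieskorn's decomposition together with a dimension count is how the surjectivity-to-isomorphism step is closed. One genuine gap in your $(\Rightarrow)$ direction: your induction on $|\CA|$, applied to $\CA_X$, only works when $\CA_X$ is a \emph{proper} subarrangement, i.e.\ when $X \neq T_\CA$. At $X = T_\CA$ you have $\CA_X = \CA$ and the inductive hypothesis is unavailable, yet the singleton condition at $T_\CA$ is exactly the assertion that some $\pi_i$ is itself a singleton (cf.\ Remark~\ref{rem:factored}(iii)). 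This can be recovered: since $\CA$ is central, $(1+t)$ divides $\pi(\CA,t)$, and the assumed isomorphism gives $\pi(\CA,t) = \prod_i (1 + |\pi_i| t)$, forcing $|\pi_i| = 1$ for some $i$. You should make this explicit. A smaller point: in your $(\Rightarrow)$ sketch the phrase ``applying the already established forward direction to $\CA_X$'' is misleading --- what you actually need is the full equivalence for $\CA_X$, available by induction, not the forward direction alone.
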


\begin{corollary}
\label{cor:teraofactored}
Let  $\pi = (\pi_1, \ldots, \pi_s)$ be a factorization of $\CA$.
Then the following hold:
\begin{itemize}
\item[(i)] $s = r = r(\CA)$ and 
\[
\Poin(A(\CA),t) = \prod_{i=1}^r (1 + |\pi_i|t);
\]
\item[(ii)]
the multiset $\{|\pi_1|, \ldots, |\pi_r|\}$ only depends on $\CA$;
\item[(iii)]
for any $X \in L(\CA)$, we have
\[
r(X) = |\{ i \mid \pi_i \cap \CA_X \ne \varnothing \}|.
\]  
\end{itemize}
\end{corollary}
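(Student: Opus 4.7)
The plan is to derive all three parts from Theorem \ref{thm:teraofactored}, the identity $\Poin(A(\CA),t) = \pi(\CA,t)$ from \eqref{eq:piA}, and the hereditary property of niceness recorded in Remark \ref{rem:factored}(ii).

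For part (i), I would start with the isomorphism $\kappa : [\pi_1] \otimes \cdots \otimes [\pi_s] \to A(\CA)$ of graded $\BBK$-vector spaces, which is guaranteed by Theorem \ref{thm:teraofactored} since $\pi$ is nice. Since $\Poin([\pi_i], t) = 1 + |\pi_i|t$ for each block (note $|\pi_i| \ge 1$ as parts of a partition are non-empty), the Poincar\'e polynomial of the tensor product is $\prod_{i=1}^s(1+|\pi_i|t)$, which has degree $s$. On the other hand, the top nonzero degree of $A(\CA)$ is $r = r(\CA)$. Comparing, we get $s = r$ and the stated factorization of $\Poin(A(\CA),t)$.

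For part (ii), I would invoke the Orlik-Solomon theorem recorded in \eqref{eq:piA}: $\Poin(A(\CA),t) = \pi(\CA,t)$. Since the right hand side depends only on $\CA$ and not on $\pi$, combining with part (i) yields
\[
\pi(\CA,t) = \prod_{i=1}^r (1 + |\pi_i|t).
\]
As $\BBZ[t]$ is a unique factorization domain, the multiset of linear factors $\{1 + |\pi_i|t \mid 1 \le i \le r\}$, and hence the multiset $\{|\pi_1|, \ldots, |\pi_r|\}$, is determined by $\CA$ alone.

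For part (iii), I would appeal to Remark \ref{rem:factored}(ii), which says that the non-empty blocks of the induced partition $\pi_X$ form a nice partition of the subarrangement $\CA_X$. Applying part (i) to $\CA_X$ with this induced partition, the number of non-empty parts equals $r(\CA_X)$. Since $\CA_X$ is central with center containing $X$, and a straightforward check shows $r(\CA_X) = \codim_V(X) = r(X)$, we conclude
\[
|\{ i \mid \pi_i \cap \CA_X \ne \varnothing\}| = r(X),
\]
as required. The case $X = V$ is the trivial base case with both sides equal to zero. No step presents a genuine obstacle: all assertions reduce mechanically to Theorem \ref{thm:teraofactored} together with unique factorization of polynomials and the hereditary behaviour of nice partitions.
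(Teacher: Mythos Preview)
Your argument is correct. The paper does not give its own proof of this corollary: it is simply recalled from Terao's original article \cite{terao:factored} (see the sentence preceding Theorem~\ref{thm:teraofactored}), so there is no in-paper proof to compare against. Your derivation---reading off (i) from the graded isomorphism of Theorem~\ref{thm:teraofactored}, deducing (ii) via \eqref{eq:piA} and unique factorization in $\BBZ[t]$, and obtaining (iii) by applying (i) to the localization $\CA_X$ through Remark~\ref{rem:factored}(ii)---is exactly the standard route and matches how Terao obtains these consequences. One small remark: in part~(iii) you note that the center of $\CA_X$ contains $X$; in fact it equals $X$ (since $X$ is itself an intersection of hyperplanes of $\CA_X$), which is what gives $r(\CA_X) = r(X)$ and is the ``straightforward check'' you allude to.
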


\begin{remark}
\label{rem:factorediscombinatorial}
It follows from \eqref{eq:piA}
and Corollary \ref{cor:teraofactored} that 
the question whether $\CA$ is factored is a purely combinatorial 
property and only depends on the lattice $L(\CA)$. 
\end{remark}

\begin{remark}
\label{rem:exponents}
Suppose that $\CA$ is free of rank $r$.
Then $\CA = \Phi_{\ell-r} \times \CA_0$, 
where $\CA_0$ is an essential, free $r$-arrangement
(cf.\ \cite[\S 3.2]{orlikterao:arrangements}), and so,  
thanks to Proposition \ref{prop:product-free}, 
$\exp \CA = \{0^{\ell-r}, \exp \CA_0\}$.
Suppose that $\pi = (\pi_1, \ldots, \pi_r)$ is a nice partition 
of $\CA$. 
Then by the factorization properties of the Poincar\'e polynomials
for free and factored arrangements, 
Theorem \ref{thm:freefactors}, respectively 
Corollary \ref{cor:teraofactored}(i) and 
\eqref{eq:piA}
we have 
\begin{equation}
\label{eq:exp}
\exp \CA = \{0^{\ell-r}, |\pi_1|, \ldots, |\pi_r|\}.
\end{equation}
In particular, if $\CA$ is essential, then 
\begin{equation}
\label{eq:ess-exp}
\exp \CA = \{|\pi_1|, \ldots, |\pi_\ell|\}.
\end{equation}
\end{remark}

Finally, we record 
\cite[Ex.\ 2.4]{terao:factored},
which shows that nice arrangements generalize 
supersolvable ones 
(cf.\  \cite[Thm.\ 5.3]{orliksolomonterao:hyperplanes}, 
\cite[Prop.\ 3.2.2]{jambu:factored}, 
\cite[Prop.\ 2.67, Thm.\ 3.81]{orlikterao:arrangements}).

\begin{proposition}
\label{prop:ssfactored}
Let $\CA$ be a central,  supersolvable arrangement of rank $r$.
Let 
\[
V = X_0 < X_1 < \ldots < X_{r-1} < X_r = T_\CA
\]
be a  maximal chain of modular elements in $L(\CA)$.
Define
$\pi_i = \CA_{X_i} \setminus \CA_{X_{i-1}}$
for $1 \le i \le r$.
Then $\pi = (\pi_1, \ldots, \pi_r)$ is a nice partition of $\CA$.
In particular, the $\BBK$-linear map 
$\kappa$ defined in \eqref{eq:factoredOS} is an 
isomorphism of graded $\BBK$-vector spaces.
\end{proposition}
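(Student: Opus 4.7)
The plan is to verify directly that $\pi = (\pi_1, \ldots, \pi_r)$ is a nice partition of $\CA$; the claim about $\kappa$ then follows at once from Terao's Theorem \ref{thm:teraofactored}. Three things must be checked: that $\pi$ is genuinely a partition of $\CA$, that it is independent in the sense of Definition \ref{def:independent}, and that each induced partition $\pi_X$ admits a singleton block.

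First I would dispose of the partition claim. Since $X_0 = V$ gives $\CA_{X_0} = \varnothing$ and $X_r = T_\CA$ gives $\CA_{X_r} = \CA$, the blocks $\pi_i = \CA_{X_i} \setminus \CA_{X_{i-1}}$ telescope disjointly to $\CA$. Each $\pi_i$ is non-empty, because strictness of the chain together with the elementary identity $X = \bigcap_{H \in \CA_X} H$ forces $\CA_{X_{i-1}} \subsetneq \CA_{X_i}$.

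Independence I would handle by induction on $s$: for arbitrary choices $H_i \in \pi_i$, set $Y_s := H_1 \cap \cdots \cap H_s$ and assume $r(Y_{s-1}) = s-1$. The containments $X_{s-1} \subseteq X_i \subseteq H_i$ for $i \le s-1$ yield $X_{s-1} \subseteq Y_{s-1}$; a hypothetical $Y_{s-1} \subseteq H_s$ would then force $X_{s-1} \subseteq H_s$, contradicting $H_s \in \pi_s = \CA_{X_s} \setminus \CA_{X_{s-1}}$, so $r(Y_s) = s$.

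The heart of the matter is the singleton condition, where modularity enters decisively, and this is the step I expect to be the main obstacle. Given $X \in L(\CA) \setminus \{V\}$, I would set $Z_i := X + X_i$; each $Z_i$ lies in $L(\CA)$ precisely because each $X_i$ is modular, and the $Z_i$ form a descending chain of subspaces $V = Z_0 \supseteq Z_1 \supseteq \cdots \supseteq Z_r = X$ whose successive rank jumps are $0$ or $1$. Let $k$ be the smallest index with $Z_{k-1} \supsetneq Z_k$; then $Z_{k-1} = V$ and $r(Z_k) = 1$, so $Z_k$ is a rank-one element of $L(\CA)$ and hence a hyperplane in $\CA$. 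The key observation is now that for any $H \in \CA_X$ the equality $X + X_{k-1} = V$ automatically forces $X_{k-1} \not\subseteq H$, so $\pi_k \cap \CA_X = \{ H \in \CA : Z_k \subseteq H \} = \CA_{Z_k} = \{Z_k\}$, the desired singleton. The delicate points are identifying the correct index $k$ via the chain $Z_i$ and recognising that the rank-one jump produces an honest hyperplane of $\CA$; everything else reduces to formal bookkeeping before invoking Theorem \ref{thm:teraofactored}.
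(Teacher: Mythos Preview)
The paper does not actually prove this proposition; it is recorded as a known result with citations to \cite{terao:factored}, \cite{orliksolomonterao:hyperplanes}, \cite{jambu:factored}, and \cite[Prop.~2.67, Thm.~3.81]{orlikterao:arrangements}. Your argument is correct and is essentially the standard one found in those references: the independence step is exactly \cite[Prop.~2.67]{orlikterao:arrangements}, and your verification of the singleton condition via the chain $Z_i = X + X_i$ is the natural approach (indeed, the same idea with a single modular element reappears later in the paper in the proof of Lemma~\ref{lem:distinguished}).

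One small point you assert without justification is that the successive rank jumps $r(Z_i) - r(Z_{i-1})$ are at most $1$. This is true, but it deserves a line: from $X_i \subseteq X_{i-1}$ one has $Z_{i-1} = Z_i + X_{i-1}$, and since $X_i \subseteq Z_i \cap X_{i-1}$, the dimension formula gives $\dim Z_{i-1} - \dim Z_i = \dim X_{i-1} - \dim(Z_i \cap X_{i-1}) \le \dim X_{i-1} - \dim X_i = 1$. With that filled in, the proof is complete.
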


\section{Factored and inductively factored Arrangements}
\label{sec:indfactored}

\subsection{Restriction, Addition and Deletion for nice Arrangements}
\label{sec:adddel}

In our main result, Theorem \ref{thm:add-del-factored}, 
we prove an analogue for nice arrangements 
of Terao's seminal
Addition-Deletion Theorem \ref{thm:add-del} for free arrangements.
Following Jambu and Paris 
\cite{jambuparis:factored}, 
we introduce further notation.

\begin{defn} 
\label{def:distinguished}
Suppose $\CA \ne \Phi_\ell$.
Let $\pi = (\pi_1, \ldots, \pi_s)$ be a partition of $\CA$.
Let $H_0 \in \pi_1$ and 
let $(\CA, \CA', \CA'')$ be the triple associated with $H_0$. 
We say that $H_0$ is \emph{distinguished (with respect to $\pi$)}
provided $\pi$ induces a factorization $\pi'$ of 
$\CA'$, i.e.\ the non-empty 
subsets $\pi_i \cap \CA'$ form a nice partition
of $\CA'$. Note that since $H_0 \in \pi_1$, we have
$\pi_i \cap \CA' = \pi_i\not= \varnothing$ 
for $i = 2, \ldots, s$. 

Also, associated with $\pi$ and $H_0$, we define 
the \emph{restriction map}
\[
\R := \R_{\pi,H_0} : \CA \setminus \pi_1 \to \CA''\ \text{ given by } \ H \mapsto H \cap H_0
\]
and set 
\[
\pi_i'' := \R(\pi_i) = \{H \cap H_0 \mid H \in \pi_i\} \
\text{ for }\  2 \le i \le s.
\]

In general, $\R$ need not be surjective nor injective.
However, since we are only concerned with cases when 
$\pi'' = (\pi_2'', \ldots, \pi_s'')$ is a
partition of $\CA''$,  
$\R$ has to be onto and 
$\R(\pi_i) \cap \R(\pi_j) = \varnothing$ for $i \ne j$.
As we shall see, 
the natural condition for us is the injectivity of $\R$.
\end{defn}

\begin{remark}
\label{rem:distinguished}
Our definitions of a distinguished hyperplane 
and of the restriction map $\R$
in Definition \ref{def:distinguished} differ 
from the one by Jambu and Paris 
\cite[\S 2]{jambuparis:factored},
in that we do not require the underlying partition to 
be a factorization of $\CA$.
This more general setting is crucial for the purpose of 
the ``Addition'' statement in 
Theorem \ref{thm:add-del-factored} below, as here 
we want to deduce that $\pi$ is nice for $\CA$. 
This comes at the expense of having to impose  
bijectivity for $\R$. 

These more general notions are clearly feasible, 
as shown by the following example.
\end{remark}

\begin{example}
\label{ex:a222}
Let $\CA = \CD_2^2$ be the reflection arrangement
of the Coxeter group of type $B_2$,
cf.\ \cite[Ex.~2.6]{jambuterao:free}. 
Then $\CA$ has defining polynomial $Q(\CA) = xy(x^2-y^2)$.
While independent,  
the  partition
$\pi = (\{\ker x, \ker(x-y)\}, \{\ker y, \ker(x+y)\})$ 
is obviously
not nice for $\CA$, as none of its  
parts is a singleton, cf.\ Remark \ref{rem:factored}(iii). 
However, for any choice of hyperplane $H_0 \in \CA$, the induced partitions
$\pi'$ of $\CA'$ and $\pi''$ of $\CA''$  are factorizations, 
but $\R$ is never injective.
Nevertheless,  as a $2$-arrangment, $\CA$ is of course factored,
cf.\ Remark \ref{rem:dim2}.
\end{example}

Using the notation and terminology introduced 
in Definition \ref{def:distinguished},
the following result by Jambu and Paris
gives the ``\emph{Restriction}'' part of the Addition-Deletion Theorem
\ref{thm:add-del-factored} below. 

\begin{proposition}
[{\cite[Prop.\ 2.1]{jambuparis:factored}}] 
\label{prop:JPinduced}
Suppose that $\CA \ne \Phi_\ell$ has rank $r$.
Let $\pi = (\pi_1, \ldots, \pi_r)$ be a factorization of $\CA$.
Let $H_0 \in \pi_1$ and 
let $(\CA, \CA', \CA'')$ be the triple associated with $H_0$. 
Suppose $H_0$ is distinguished with respect to $\pi$.
Then the restriction map $\R: \CA \setminus \pi_1 \to \CA''$ 
is a bijection and
the induced partition $\pi'' = (\pi_2'', \ldots, \pi_r'')$ is a 
factorization  of $\CA''$.
\end{proposition}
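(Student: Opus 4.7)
The plan is to treat the two assertions in order: first establish bijectivity of $\R$ using Corollary \ref{cor:teraofactored}(iii) applied to both $\pi$ and $\pi'$ at rank-$2$ elements, then transport the factorization property from $\pi'$ on $\CA'$ across $\R$ to obtain a factorization of $\CA''$.

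For surjectivity of $\R$, given $Y \in \CA''$ write $Y = K \cap H_0$ with $K \in \CA'$. If $K \notin \pi_1$ there is nothing to show. Otherwise $K$ and $H_0$ are distinct members of $\pi_1$, so $r(Y) = 2$; Corollary \ref{cor:teraofactored}(iii) applied to $\pi$ at $Y$ forces a second index $j \ne 1$ with some $L \in \pi_j \cap \CA_Y$, and $L \ne H_0$ together with $Y \subset L \cap H_0$ yields $Y = \R(L)$. For injectivity, suppose $H_1 \ne H_2$ in $\CA \setminus \pi_1$ satisfy $\R(H_1) = \R(H_2) = Y$. Then $r(Y) = 2$, and Corollary \ref{cor:teraofactored}(iii) forces both $H_1$ and $H_2$ into a single block $\pi_j \cap \CA_Y$ with $j \ne 1$, while $H_0$ contributes the only non-empty block from $\pi_1$. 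The induced partition $\pi_Y$ is nice by Remark \ref{rem:factored}(ii), so Remark \ref{rem:factored}(iii) demands one of its two blocks be a singleton; since $\pi_j \cap \CA_Y \supseteq \{H_1, H_2\}$ this forces $\pi_1 \cap \CA_Y = \{H_0\}$. But then $Y = H_1 \cap H_2 \in L(\CA')$, the induced partition $(\pi')_Y$ of $(\CA')_Y = \CA_Y \setminus \{H_0\}$ has the single non-empty block $\pi_j \cap \CA_Y$ of size at least two, contradicting niceness of $\pi'$ at $Y$ via Remark \ref{rem:factored}(iii).

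With $\R$ bijective, the sets $\pi''_i = \R(\pi_i)$ for $i \ge 2$ form a partition of $\CA''$ with $|\pi''_i| = |\pi_i|$. Independence of $\pi''$ in $H_0$ is inherited from independence of $\pi$: choosing $H_i \in \pi_i$ for $i \ge 2$, the intersection $H_2 \cap \cdots \cap H_r \cap H_0$ has codimension $r$ in $V$ and hence codimension $r - 1$ in $H_0$. For the singleton condition, fix $Z \in L(\CA'') \setminus \{H_0\}$ and examine the nice partition $\pi_Z$ of $\CA_Z$ guaranteed by Remark \ref{rem:factored}(ii). If its singleton sits in some $\pi_j \cap \CA_Z$ with $j \ne 1$, then injectivity of $\R$ transports this to the singleton $\pi''_j \cap \CA''_Z$ of $(\pi'')_Z$. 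Otherwise the singleton is $\pi_1 \cap \CA_Z = \{H_0\}$; in this case set $Z' := \bigcap_{H \in \CA_Z \setminus \{H_0\}} H \in L(\CA')$. A direct check gives $(\CA')_{Z'} = \CA_Z \setminus \{H_0\}$, which is non-empty since $Z \ne H_0$ forces $\CA''_Z \ne \varnothing$, so $Z' \ne V$ and niceness of $\pi'$ at $Z'$ applies. Because $\pi'_1 \cap (\CA')_{Z'} = (\pi_1 \cap \CA_Z) \setminus \{H_0\} = \varnothing$, the guaranteed singleton of $(\pi')_{Z'}$ lies in $\pi_j \cap \CA_Z$ for some $j \ne 1$, and $\R$ again converts it to a singleton block of $(\pi'')_Z$.

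The principal obstacle is the injectivity step, where one must simultaneously exploit niceness of both $\pi$ and $\pi'$ at the same rank-$2$ subspace $Y$ and keep careful track of which blocks survive the deletion of $H_0$; the factorization argument for $\pi''$ then essentially repeats the same bookkeeping at arbitrary $Z \in L(\CA'')$.
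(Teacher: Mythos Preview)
Your proof is correct and follows essentially the same strategy as the paper's: both arguments establish bijectivity of $\R$ by applying Corollary~\ref{cor:teraofactored}(iii) at rank-$2$ elements together with the niceness of $\pi$ and $\pi'$, and both verify the singleton condition for $\pi''$ by lifting $Z \in L(\CA'')$ to $L(\CA)$ and invoking the factorizations of $\CA$ and $\CA'$. The only notable difference is organizational: in the case where the singleton of $\pi_Z$ is $\pi_1 \cap \CA_Z = \{H_0\}$, the paper writes $Z = H_0 \cap Y$ with $Y$ an intersection of hyperplanes from $\CA \setminus \pi_1$ and then splits into the subcases $Z = Y$ (using $\pi'$ at $Z$) and $Z \ne Y$ (using $\pi$ at $Y$), whereas you handle both at once by passing to $Z' = \bigcap_{H \in \CA_Z \setminus \{H_0\}} H$ and observing $(\CA')_{Z'} = \CA_Z \setminus \{H_0\}$, so that niceness of $\pi'$ at $Z'$ suffices uniformly. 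This is a mild streamlining but not a genuinely different route.
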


\begin{proof}
Let $X \in \CA''$. 
Viewing $X$ as a member of $L(\CA)$, we have $r(X) = 2$. 
It follows from Corollary \ref{cor:teraofactored}(iii) that
there is an $H \in \pi_k \cap \CA_X$ for some $k >1$.
Thus $\R(H) = X$, and so $\R$ is onto.
Suppose $H, H' \in \CA\setminus \pi_1$ so that 
$H_0 \cap H = X = H_0 \cap H'$.
Again since $r(X) = 2$ and $H_0 \in \pi_1 \cap \CA_X$, we have
$H, H' \in \pi_k$ for some $k >1$,
by Corollary \ref{cor:teraofactored}(iii).
In particular, $H \cap H'$ belongs to $L(\CA')$.
Since $\pi'$ is a factorization of $\CA'$, 
$\pi'_{H \cap H'}$ has to admit a singleton as one of its parts,
say $\pi_j' \cap \CA_{H \cap H'}' = \{K\}$.
If $j = 1$, then $K \ne H_0$ and so $|\pi_1 \cap \CA_X| \ge 2$.
Consequently, the singleton of 
$\pi_X$ has to be $\pi_k \cap \CA_X$ and thus $H = H'$.
If $j > 1$, then 
$\pi_j' \cap \CA_{H \cap H'}' = \pi_j \cap \CA_{H \cap H'}$, 
and so $j = k$ and again $H = H'$.
In any event, $\R$ is bijective.

Let $H_i \in \pi_i$ for $i = 2, \ldots, r$.
Then $r(H_0 \cap H_2 \cap \ldots \cap H_r) = r$, since $\pi$ is
independent. Thus 
$r((H_0 \cap H_2) \cap \ldots \cap (H_0 \cap H_r)) = r - 1$ 
in $L(\CA'')$, and so $\pi''$ is independent.

Let $X \in L(\CA'')$ and $X \ne H_0$.
Viewing $X$ as a member of $L(\CA)$,  
since $\pi$ is a factorization, there is a part $\pi_k$ of $\pi$
so that $\pi_k \cap \CA_X = \{H\}$.
If $k > 1$, then $H \in \CA \setminus \pi_1$ and so
$\pi_k'' \cap \CA''_X = \{H \cap H_0\}$. 

Now suppose that $k = 1$. Then, since $H_0 \in \pi_1 \cap \CA_X$, 
we must have $H = H_0$, i.e.\ $\pi_1 \cap \CA_X = \{H_0\}$.
Write $X = H_0 \cap Y$, where $Y = L_1 \cap \ldots \cap L_m$
for $L_i \in \CA \setminus \pi_1$.

If $X = Y$, then $X \in L(\CA')$.
Thus, since $\pi'$ is a factorization of $\CA'$, there is a 
part $\pi_j'$ of $\pi'$ so that 
$\pi_j' \cap \CA_X' = \{K\}$. Since $K \ne H_0$ and 
$\pi_1 \cap \CA_X = \{H_0\}$, we must have $j > 1$ 
and so $K \in \CA \setminus \pi_1$.
Therefore, $\pi_j \cap \CA_X = \pi_j' \cap \CA_X' = \{K\}$ and so 
$\pi_j'' \cap \CA_X'' = \{K \cap H_0\}$.
Finally, if $X \ne Y$, then $r(X) = r(Y) +1$ and thus
$\pi_1 \cap \CA_Y = \varnothing$, thanks to 
Corollary \ref{cor:teraofactored}(iii).
Thus, since $\pi_1 \cap \CA_X = \{H_0\}$, we have $\CA_X = \CA_Y \cup \{H_0\}$.
As $\pi$ is a factorization, there is an index $j >1$ such that
$\pi_j \cap \CA_Y = \{L\}$. 
Since $\CA_X = \CA_Y \cup \{H_0\}$ and $j >1$, we have
\[
\{ L \} = \pi_j \cap \CA_Y = \pi_j \cap \CA_X. 
\]
It follows that
$\pi_j'' \cap \CA_X'' = \{L \cap H_0\}$,
as required.
\end{proof}

Here is our analogue for nice arrangements of Terao's 
Addition-Deletion Theorem \ref{thm:add-del} for free arrangements.

\begin{theorem}
\label{thm:add-del-factored}
Suppose that $\CA \ne \Phi_\ell$ has rank $r$.
Let $\pi = (\pi_1, \ldots, \pi_s)$ be a  partition  of $\CA$.
Let $H_0 \in \pi_1$ and 
let $(\CA, \CA', \CA'')$ be the triple associated with $H_0$. 
Then any two of the following statements imply the third:
\begin{itemize}
\item[(i)] $\pi$ is nice for $\CA$;
\item[(ii)] $\pi'$ is nice for $\CA'$;
\item[(iii)] $\R: \CA \setminus \pi_1 \to \CA''$ 
is bijective and $\pi''$ is nice for $\CA''$.
\end{itemize}
\end{theorem}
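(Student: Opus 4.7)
The plan is to prove the theorem by establishing each of the three implications separately. The direction (i) + (ii) $\Rightarrow$ (iii) is already contained in Proposition \ref{prop:JPinduced}: the hypothesis that $\pi'$ is nice for $\CA'$ is exactly the condition that $H_0$ be distinguished with respect to $\pi$, whence the cited result yields both the bijectivity of $\R$ and the niceness of $\pi''$. The bulk of the work therefore lies in the two ``addition'' directions.

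For (ii) + (iii) $\Rightarrow$ (i), I would verify the two conditions of Definition \ref{def:factored} for $\pi$. Independence is checked by picking $H_i \in \pi_i$ for each $i$: if $H_1 \ne H_0$, then $\pi_1 \setminus \{H_0\}$ is nonempty, $(H_1,\ldots,H_s)$ is a valid $\pi'$-choice, and $\pi'$-independence gives rank $s$; if $H_1 = H_0$, bijectivity of $\R$ places the hyperplanes $H_0 \cap H_i$ ($i \ge 2$) in distinct parts $\pi_i''$, so $\pi''$-independence yields codimension $s-1$ inside $H_0$, hence $r(H_0 \cap H_2 \cap \cdots \cap H_s) = s$ in $V$. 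For the singleton axiom, let $X \in L(\CA) \setminus \{V\}$ and split on whether $H_0 \in \CA_X$. If $H_0 \notin \CA_X$, then $X \in L(\CA')$, $\CA_X = \CA_X'$, and $\pi_X$ coincides with the induced partition of $\pi'$ at $X$, which has a singleton by niceness of $\pi'$. If $X = H_0$, then $\CA_X = \{H_0\}$ and $\{H_0\}$ is trivially a singleton block of $\pi_X$. If $X \subsetneq H_0$, then $X \in L(\CA'') \setminus \{H_0\}$, niceness of $\pi''$ supplies a singleton $\pi_k'' \cap \CA_X'' = \{H_0 \cap K\}$ with $K \in \pi_k$ and $k \ge 2$, and bijectivity of $\R$ forces $\pi_k \cap \CA_X = \{K\}$: any other $K' \in \pi_k \cap \CA_X$ would satisfy $H_0 \cap K' \in \pi_k'' \cap \CA_X'' = \{H_0 \cap K\}$, and hence $K' = K$.

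The direction (i) + (iii) $\Rightarrow$ (ii) goes symmetrically. Independence of $\pi'$ is inherited from that of $\pi$ whenever $\pi_1 \ne \{H_0\}$, because every $\pi'$-choice is also a $\pi$-choice; if $\pi_1 = \{H_0\}$, the bijectivity-of-$\R$ argument from the previous paragraph propagates $\pi''$-independence to $\pi'$. For the singleton axiom, fix $X \in L(\CA') \setminus \{V\}$: if $X \not\subseteq H_0$, then $H_0 \notin \CA_X$, $\CA_X = \CA_X'$, and the singleton of $\pi_X$ guaranteed by niceness of $\pi$ is inherited as a singleton of the induced partition of $\pi'$ at $X$; if $X \subseteq H_0$, then $X \in L(\CA'') \setminus \{H_0\}$, and niceness of $\pi''$ combined with bijectivity of $\R$ lifts a singleton $\{H_0 \cap K\}$ in $\pi_k'' \cap \CA_X''$ back to a singleton $\{K\}$ in $\pi_k' \cap \CA_X'$, exactly as before.

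The main obstacle is the bookkeeping across three related partitions, three arrangements, and their induced partitions at each lattice element $X$; in particular, the degenerate case $\pi_1 = \{H_0\}$ (where $\pi_1 \cap \CA'$ is empty and $r(\CA') = r(\CA) - 1$, forcing a drop in the number of parts of $\pi'$) requires a separate subcase in the independence arguments. The crucial role of the bijectivity of $\R$ in hypothesis (iii) is to guarantee that a singleton $\{H_0 \cap K\}$ in $\pi_k''$ lifts \emph{uniquely} to a singleton $\{K\}$; without injectivity of $\R$ one could have $H_0 \cap K = H_0 \cap K'$ for distinct $K, K' \in \pi_k$, precisely the failure mode illustrated by Example \ref{ex:a222}.
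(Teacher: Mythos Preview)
Your proposal is correct and follows essentially the same route as the paper's proof: the implication (i)+(ii) $\Rightarrow$ (iii) is deferred to Proposition \ref{prop:JPinduced}, and for the Addition and Deletion parts you split the singleton verification according to whether $H_0 \in \CA_X$, using $\pi'$ (resp.\ $\pi$) in the easy case and lifting a singleton of $\pi''$ via the bijectivity of $\R$ in the other case, exactly as the paper does. The only noticeable difference is in the independence checks: for (i)+(iii) $\Rightarrow$ (ii) the paper simply observes that $\pi'$ inherits independence from $\pi$ (hypothesis (i)) in one line, whereas you route the degenerate case $\pi_1 = \{H_0\}$ through $\pi''$; this works but is unnecessary, since any $\pi'$-transversal extends to a $\pi$-transversal by adjoining $H_0$.
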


\begin{proof}
If (i) and (ii) hold, then so does (iii), by Proposition \ref{prop:JPinduced}.

For ``\emph{Addition}'', assume (ii) and (iii).
We need to show that $\pi$
is a factorization of $\CA$.
As remarked in Definition \ref{def:distinguished}, 
if $\R$ is injective and $\pi''$ is a partition
of $\CA''$, then $\R$ is bijective.

By (ii), $\pi'$ is already independent.
Let $H_i \in \pi_i$ for  $i = 2, \ldots, s$.
Set $X = H_0 \cap H_2 \cap \ldots \cap H_s$.
Then 
$X = (H_0 \cap H_2) \cap (H_0 \cap H_3) \cap \ldots \cap (H_0 \cap  H_s) \in L(\CA'')$.
Since $H_i \in \pi_i$, we have
$H_0 \cap H_i \in \pi''_i$, for  $i = 2, \ldots, s$.
By (iii), $\pi''$ is a factorization of $\CA''$, so 
the rank of $X$ as a member of $L(\CA'')$ is $r - 1$, 
by Corollary \ref{cor:teraofactored}(i) applied to $\CA''$,  
and thus 
the rank of $X$ in $L(\CA)$ is $r$, 
and so in particular, $s = r$ and
$\pi$ is independent.

Let $X \in L(\CA)\setminus \{V\}$. 
We need to show that $\pi_X$ admits a singleton as 
one of its parts.
If $X \in \CA$ is a hyperplane, this is obvious.
So we may assume that $r(X) >1$.

If $H_0 \notin \CA_X$, 
then $\CA_X = \CA'_X$ and 
$X \in L(\CA') \setminus \{V\}$. 
Since $\pi'$ is a factorization of $\CA'$,
there is an index $k$ such that 
$\pi_k' \cap \CA'_X = \{H\}$.
Therefore, since $H \ne H_0$ and 
$\CA_X = \CA'_X$, 
\[
\pi_k \cap \CA_X = \pi_k \cap \CA'_X = \pi_k' \cap \CA'_X = \{H\}
\]
is a singleton.

Now suppose that $H_0 \in \CA_X$.
Since $\R$ is surjective, there are $L_1,\ldots,L_m \in \CA \setminus \pi_1$ 
with $\CA''_X = \{H_0 \cap L_1,\ldots,H_0 \cap L_m\}$.
Since $\pi''$ is a factorization of $\CA''$, there is a $k$ so that
$\pi_k'' \cap \CA''_X = \{H_0 \cap L_i\}$ for some $i$.
Since $\R$ is injective it follows that $\pi_k \cap \CA_X = \{L_i\}$
is a singleton.
Consequently, $\pi$ is a factorization of $\CA$, as claimed.

Finally, 
for the ``\emph{Deletion}'' part, 
suppose that (i) and (iii) hold. 
We need to show that $\pi'$ is a factorization
of $\CA'$.
Clearly, $\pi'$ is independent, since $\pi$ is.

Let $X \in L(\CA')\setminus \{V\}$.
We may also assume that $X \notin \CA'$.
Then $X \in L(\CA)\setminus \{V\}$.
Since $\pi$ is a factorization of $\CA$,
there is an index $k$ so that
$\pi_k \cap \CA_X =\{H\}$
is a singleton.
If $H_0 \notin \CA_X$,
then $H \ne H_0$ and so $H \in \CA'_X$.
Therefore, 
\[
\{H \} \subseteq \pi_k' \cap \CA'_X \subseteq \pi_k \cap \CA_X =  \{H\}.
\]
So $\pi'_k \cap \CA'_X$ is a singleton.

Now suppose that $H_0 \in \CA_X$, so that $X \subset H_0$. 
Then again 
$X = H_0 \cap K_1 \cap \ldots \cap K_m$
for some hyperplanes $K_i \in \CA$.
Thus $X = (H_0 \cap K_1) \cap (H_0 \cap K_2) \cap \ldots \cap (H_0 \cap  K_m) \in L(\CA'')$.
Because $\R : \CA \setminus \pi_1 \to \CA''$ is surjective, 
there are hyperplanes $L_i \in \CA \setminus \pi_1$ 
such that $H_0 \cap L_i = H_0 \cap K_i$ for 
$i = 1, \ldots, m$.
So $X = (H_0 \cap L_1) \cap (H_0 \cap L_2) \cap \ldots \cap (H_0 \cap  L_m)$.
Since $\pi''$ is a factorization of $\CA''$,
there is a $k$ so that   
$\pi_k'' \cap \CA''_X = \{H_0 \cap L_i\}$.
By construction, $k > 1$,
$L_i \in  \pi_k = \pi_k'$ and $L_i \in \CA'$.
Thus, since $\R$ is injective,
it follows that 
\[
\{L_i \} = \pi_k \cap \CA_X = \pi'_k \cap \CA'_X.
\]
So again, $\pi'_k \cap \CA'_X$ is a singleton.
Consequently, $\pi'$ is a factorization of $\CA'$, as claimed.
\end{proof}

Theorem \ref{theorem:add-del-factored} is an immediate 
consequence of Theorem \ref{thm:add-del-factored}.

As nice arrangements need not be free and 
vice versa, combining Theorems 
\ref{thm:add-del} and 
\ref{thm:add-del-factored}
yields an 
Addition-Deletion Theorem for 
the subclass of arrangements that 
are both nice and free.

\begin{theorem}
\label{thm:add-del-free-factored}
Suppose that $\CA \ne \Phi_\ell$ has rank $r$.
Let $\pi = (\pi_1, \ldots, \pi_s)$ be a  partition  of $\CA$.
Let $H_0 \in \pi_1$ and 
let $(\CA, \CA', \CA'')$ be the triple associated with $H_0$. 
Then any two of the following statements imply the third:
\begin{itemize}
\item[(i)] $\pi$ is nice for $\CA$ and $\CA$ is free; 
\item[(ii)] $\pi'$ is nice for $\CA'$ and $\CA'$ is free; 
\item[(iii)] $\pi''$ is nice for $\CA''$ and $\CA''$ is free with
$\exp \CA'' = \{0^{\ell-r}, |\pi_2|, \ldots, |\pi_r|\}$.
\end{itemize}
\end{theorem}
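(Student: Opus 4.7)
The plan is to reduce Theorem~\ref{thm:add-del-free-factored} to two earlier results: the nice-arrangement addition-deletion theorem~\ref{thm:add-del-factored} and Terao's classical addition-deletion theorem~\ref{thm:add-del} for free arrangements. In each of the three implications ``any two of (i)--(iii) imply the third'' I split the missing conclusion into its nice part and its free part, and dispose of them using~\ref{thm:add-del-factored} and~\ref{thm:add-del} respectively. Since the hypothesis required to apply Theorem~\ref{thm:add-del-factored} is bijectivity of the restriction map $\R \colon \CA \setminus \pi_1 \to \CA''$, establishing this is the first step in each case.

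For (i)$+$(ii)$\Rightarrow$(iii), bijectivity of $\R$ together with nice-ness of $\pi''$ follow at once from Proposition~\ref{prop:JPinduced}, and Theorem~\ref{thm:add-del} then gives freeness of $\CA''$. The claimed exponents are read off using Remark~\ref{rem:exponents} applied to the nice and free arrangements $\CA$ and $\CA'$: exactly the entry $|\pi_1|$ drops by one in passing from $\exp\CA$ to $\exp\CA'$ (collapsing into a zero when $|\pi_1|=1$ and the rank of $\CA'$ also drops by one), and in either subcase Terao's theorem outputs $\exp\CA'' = \{0^{\ell-r}, |\pi_2|, \ldots, |\pi_r|\}$.

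For both remaining implications, (i)$+$(iii)$\Rightarrow$(ii) and (ii)$+$(iii)$\Rightarrow$(i), I first need to extract bijectivity of $\R$ from hypothesis (iii) alone. The key point is a counting identity: since $\CA''$ is free, $|\CA''|$ equals the sum of its exponents, which by (iii) is $\sum_{i\ge 2} |\pi_i|$. On the other hand, since $\pi''$ is by assumption a partition of $\CA''$ with blocks $\pi_i'' = \R(\pi_i)$, the map $\R$ is surjective and
\[
|\CA''| \;=\; \sum_{i \ge 2} |\R(\pi_i)| \;\le\; \sum_{i \ge 2} |\pi_i|,
\]
with equality forcing $\R$ to be injective on each block $\pi_i$; combined with the pairwise disjointness of the images $\pi_i''$, this yields bijectivity of $\R$. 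Once $\R$ is bijective, Theorem~\ref{thm:add-del-factored} produces the missing nice-ness assertion, while Terao's Theorem~\ref{thm:add-del}, with exponent bookkeeping again supplied by Remark~\ref{rem:exponents}, produces the missing freeness assertion.

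The main technical nuisance is the degenerate subcase $|\pi_1|=1$, in which removing $H_0$ drops the rank of $\CA'$ and correspondingly shifts a zero in and out of $\exp\CA'$; but in every subcase the two exponent multisets fit together exactly as Terao's theorem prescribes. This is precisely why no containment condition on exponents needs to be imposed beyond the bijectivity of $\R$ and the exponent equality in~(iii), recovering the paper's remark following Theorem~\ref{theorem:add-del-free-factored2}.
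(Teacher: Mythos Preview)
Your proposal is correct and follows essentially the same approach as the paper: reduce to Theorems~\ref{thm:add-del} and~\ref{thm:add-del-factored}, using Remark~\ref{rem:exponents} for the exponent bookkeeping, and extract bijectivity of $\R$ from the exponent condition in~(iii) via the counting inequality $|\CA''| = \sum_{i\ge 2}|\pi_i''| \le \sum_{i\ge 2}|\pi_i|$. The paper's argument is terser (it compares the two expressions for $\exp\CA''$ directly and notes $|\pi_i|\ge|\pi_i''|$ forces equality), and it does not explicitly treat the $|\pi_1|=1$ subcase, but the substance is the same.
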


\begin{proof}
By \eqref{eq:exp}, we have 
$\exp \CA = \{0^{\ell-r}, |\pi_1|, \ldots, |\pi_r|\}$, 
$\exp \CA' = \{0^{\ell-r}, |\pi_1|-1, \ldots, |\pi_r|\}$, and 
$\exp \CA'' = \{0^{\ell-r}, |\pi_2''|, \ldots, |\pi_r''|\}$,
in (i), (ii) and (iii), respectively.
In addition, in (iii),  
since $|\pi_i| \ge |\R(\pi_i)| = |\pi_i''|$ and 
$\exp \CA'' = \{0^{\ell-r}, |\pi_2|, \ldots, |\pi_r|\}$, it follows that 
$|\pi_i| = |\pi_i''|$, for each $i = 2, \ldots, r$
and so $\R: \CA \setminus \pi_1 \to \CA''$ is bijective.
The result now follows from 
Theorems \ref{thm:add-del} and 
\ref{thm:add-del-factored}.
\end{proof}

We obtain Theorem \ref{theorem:add-del-free-factored2} as a variation of 
Theorem \ref{thm:add-del-free-factored};
the requirements on the exponents 
needed for Theorem \ref{thm:add-del} 
follow from Remark \ref{rem:exponents}.

\begin{remark}
\label{rem:injectivity}
The injectivity condition on $\R$
in Theorem \ref{thm:add-del-factored}(iii)
and the requirement on the exponents of $\CA''$ in
Theorem \ref{thm:add-del-free-factored}(iii)
(likewise the injectivity condition on $\R$ in 
Theorems \ref{theorem:add-del-factored}
and \ref{theorem:add-del-free-factored2}) 
are necessary. Else both the ``Addition'' and ``Deletion''
statements in each of the theorems are wrong,  
see Examples \ref{ex:a222} and \ref{ex:g333},
respectively.
\end{remark}

\subsection{Inductively factored Arrangements}
\label{ssec:indfactoredindfree}

The Addition-Deletion Theorem \ref{thm:add-del-factored} 
for nice arrangements motivates
the following stronger notion of factorization, 
cf.\ \cite{jambuparis:factored}.

\begin{defn} 
\label{def:indfactored}
The class $\CIFAC$ of \emph{inductively factored} arrangements 
is the smallest class of pairs $(\CA, \pi)$ of 
arrangements $\CA$ together with a partition $\pi$
subject to
\begin{itemize}
\item[(i)] $(\Phi_\ell, (\varnothing)) \in \CIFAC$ for each $\ell \ge 0$;
\item[(ii)] if there exists a partition $\pi$ of $\CA$ 
and a hyperplane $H_0 \in \pi_1$ such that 
for the triple $(\CA, \CA', \CA'')$ associated with $H_0$ 
the restriction map $\R = \R_{\pi, H_0} : \CA \setminus \pi_1 \to \CA''$ 
is bijective and for the induced partitions $\pi'$ of $\CA'$ and 
$\pi''$ of $\CA''$ 
both $(\CA', \pi')$ and $(\CA'', \pi'')$ belong to $\CIFAC$, 
then $(\CA, \pi)$ also belongs to $\CIFAC$.
\end{itemize}
If $(\CA, \pi)$ is in $\CIFAC$, then we say that
$\CA$ is \emph{inductively factored with respect to $\pi$}, or else
that $\pi$ is an \emph{inductive factorization} of $\CA$. 
Sometimes, we simply say $\CA$ is \emph{inductively factored} without 
reference to a specific inductive factorization of $\CA$.
\end{defn}

\begin{remark}
Our Definition \ref{def:indfactored}
of inductively factored arrangements
differs from the one given by 
Jambu and Paris \cite{jambuparis:factored}
in that, apart from the mere technicalities of 
incorporating empty arrangements and for defining
$\CIFAC$ for pairs of arrangements and partitions 
rather than for partitions of arrangements, 
in part (ii) we do not assume from the outset 
that $\pi$ is a factorization of $\CA$.
This is possible by virtue of 
the Addition-Deletion Theorem \ref{thm:add-del-factored}.
However, this comes at the cost of 
the bijectivity requirement for the associated restriction map $\R$;
cf.\ Remark \ref{rem:injectivity}.
\end{remark}

In view of Proposition \ref{prop:JPinduced}
it is desirable to have an easy condition that 
ensures the existence of a distinguished
hyperplane with respect to 
a given factorization $\pi$ of $\CA$. 
Our next observation shows that a modular element in $L(\CA)$ 
of rank $r -1$
which is compatible with a given factorization $\pi$ of $\CA$ 
gives such a condition. 
While sufficient, the presence 
of such a modular element is not necessary for the 
existence of a  distinguished hyperplane, see
Example \ref{ex:d13}.

\begin{lemma}
\label{lem:distinguished}
Let $\pi = (\pi_1,\ldots, \pi_r)$ be a factorization of $\CA$.
Suppose $Z \in L(\CA)$ is modular of rank $r -1$ 
so that $\pi_1 = \CA \setminus \CA_Z$. 
Then any $H \in \pi_1$ is distinguished with respect to $\pi$.
In particular, 
$\R : \CA_Z \to \CA''$ is bijective.
\end{lemma}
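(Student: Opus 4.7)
The plan is to deduce the bijectivity of $\R$ from Proposition \ref{prop:JPinduced} applied to $\pi$ and some $H_0 \in \pi_1$. That proposition requires $\pi$ to be a factorization of $\CA$ (given) and $H_0$ to be distinguished, i.e., the induced partition $\pi'$ to be a factorization of $\CA'$. So the whole proof reduces to verifying that $\pi'$ is nice for $\CA'$ whenever $H_0 \in \pi_1 = \CA \setminus \CA_Z$. Once this is done, Proposition \ref{prop:JPinduced} delivers the bijection $\R: \CA \setminus \pi_1 \to \CA''$, and since $\CA \setminus \pi_1 = \CA_Z$ by hypothesis, this is exactly the desired statement.

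For independence of $\pi'$, I would simply pick representatives $H_i \in \pi_i'$ and invoke independence of $\pi$: either $\pi_1 \setminus \{H_0\} \ne \varnothing$, in which case $r$ hyperplanes are selected from $\CA'$ and independence is inherited; or else $\pi_1 = \{H_0\}$, in which case $\pi_2 \cup \ldots \cup \pi_r \subseteq \CA_Z$, so the rank of the intersection is bounded above by $r(Z) = r-1$, and by $\pi$-independence also bounded below by $r-1$. For the singleton condition on $\pi'_X$, let $X \in L(\CA') \setminus \{V\}$ and split on whether $H_0 \in \CA_X$. If $H_0 \notin \CA_X$, then $\CA_X = \CA'_X$ and the singleton afforded by niceness of $\pi$ directly descends to $\pi'$.

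The main obstacle is the remaining case $H_0 \in \CA_X$, and this is where modularity of $Z$ is used. I would set $W := X + Z$, which lies in $L(\CA)$ by modularity; the computation $\CA_W = \CA_X \cap \CA_Z$ (recall $\pi_1 = \CA \setminus \CA_Z$) shows that $\CA_W$ is obtained from $\CA_X$ by removing precisely those hyperplanes in $\pi_1 \cap \CA_X$, in particular removing $H_0$. A short auxiliary argument (using that $X \in L(\CA') \setminus \{V\}$ forces $X \subsetneq H_0$ and hence $|\CA_X| \ge 2$, together with the factorization property of $\pi$ applied at $X$) rules out the possibility $W = V$, so I may apply the factorization property of $\pi$ at $W$. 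Since $\pi_1 \cap \CA_W = \varnothing$, the resulting singleton block is $\pi_k \cap \CA_W = \{K\}$ for some $k \ge 2$. Because $\pi_k \subseteq \CA_Z$ for $k \ge 2$, any element of $\pi_k \cap \CA_X$ also lies in $\CA_Z$ and hence in $\CA_W$; consequently $\pi_k \cap \CA_X = \pi_k \cap \CA_W = \{K\}$, which gives $\pi'_k \cap \CA'_X = \{K\}$ (since $H_0 \notin \pi_k$). This establishes the singleton condition in all cases, completes the verification that $\pi'$ is a factorization of $\CA'$, and thereby finishes the proof via Proposition \ref{prop:JPinduced}.
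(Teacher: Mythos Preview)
Your proof is correct and follows essentially the same strategy as the paper: verify that $\pi'$ is nice for $\CA'$ by using modularity of $Z$ to pass to $W = X + Z \in L(\CA)$ and locate the required singleton there, then invoke Proposition~\ref{prop:JPinduced}. The only organizational difference is that the paper does not split on whether $H_0 \in \CA_X$ but instead splits on whether $X + Z = V$, handling that degenerate case via the modular rank identity $r(X) = r(X \cap Z) + r(X+Z) - r(Z)$ to conclude $r(X) = 1$; your alternative---ruling out $W = V$ in the case $H_0 \in \CA_X$ by noting that $\CA_X \subseteq \pi_1$ would then violate the singleton condition at $X$---is a perfectly valid substitute.
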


\begin{proof}
Let $H_0 \in \pi_1$ and let 
$(\CA, \CA', \CA'')$ be the triple with respect to $H_0$.
We have to show that $\pi'$ is a factorization of $\CA'$.
Since $\pi$ is independent, clearly so is $\pi'$. 

Let $X \in L(\CA')\setminus \{V\}$. Then $X$ also belongs to 
$L(\CA)\setminus \{V\}$. 
Since $Z$ is modular, $X + Z \in L(\CA)$.
First suppose that $X + Z = V$.
Then $r(X \cap Z) = r$, as $Z \not\subseteq X$.
Since $Z$ is modular, it follows from 
\cite[Lem.\ 2.24]{orlikterao:arrangements}
that $1 \le r(X) = r(X \cap Z) + r(X + Z) - r(Z) = r + 0 - (r-1)$. 
Thus $r(X) = 1$ and so $X$ is a 
hyperplane in $\CA'$. 
In particular, $X \ne H_0$ and $\CA'_X =\{X\}$. 
Thus there is a part $\pi'_k$ of $\pi'$ so that
$\pi'_k \cap \CA'_X =\{X\}$ is a singleton.

Now suppose that  $X + Z \ne V$.
Since $X + Z$ belongs to $ L(\CA)$,
there is a block $\pi_k$ of $\pi$ such that
$\pi_k \cap \CA_{X+Z}$ is a singleton.
Because $\CA_{X+Z} = \CA_X \cap \CA_Z$ and $\pi_1 \cap \CA_Z = \varnothing$,
we must have $k > 1$ and so, 
$H_0 \notin \pi_k =  \pi_k'$ and $\pi_k = \pi_k \cap \CA_Z$.
It follows that
\[
\pi_k' \cap \CA'_X = \pi_k \cap \CA_X  = \pi_k \cap \CA_Z \cap \CA_X = 
\pi_k \cap \CA_{X+Z} 
\]
 is a singleton, as required.
The final statement on $\R$ follows from Proposition \ref{prop:JPinduced}.
\end{proof}

Note, the presence of the modular element $Z$ 
in Lemma \ref{lem:distinguished} implies that the 
bijective map $\R : \CA_Z \to \CA''$ induces an isomorphism of
the corresponding Orlik-Solomon algebras: 
$A(\CA_Z) \cong  A(\CA'')$,
cf.\ \cite[Lem.\ 5.1]{orliksolomonterao:hyperplanes},
\cite[Lem.\ 3.79]{orlikterao:arrangements}.

As an application of Lemma \ref{lem:distinguished}, we can 
strengthen Proposition \ref{prop:ssfactored}
(this is already stated in \cite{jambuparis:factored} without proof).
Note, the converse of 
Proposition \ref{prop:ssindfactored}
is false, see Example \ref{ex:d13}.

\begin{proposition}  
\label{prop:ssindfactored}
If $\CA$ is 
supersolvable, then $\CA$ is inductively factored.
\end{proposition}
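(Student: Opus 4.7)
My plan is strong induction on $|\CA|$, proving the slightly stronger statement that for any maximal modular chain $V = X_0 < X_1 < \ldots < X_r = T_\CA$ in $L(\CA)$ with associated blocks $\pi_i := \CA_{X_i} \setminus \CA_{X_{i-1}}$, the \emph{reversed} partition $\sigma := (\pi_r, \pi_{r-1}, \ldots, \pi_1)$ satisfies $(\CA, \sigma) \in \CIFAC$. The reversed order is chosen so that the partitions induced by the construction on $\CA'$ and $\CA''$ automatically come out in reversed Proposition \ref{prop:ssfactored} order for those arrangements, matching exactly what the inductive hypothesis delivers. The base case $\CA = \Phi_\ell$ is Definition \ref{def:indfactored}(i). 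For the inductive step, put $Z := X_{r-1}$ and pick any $H_0 \in \sigma_1 = \pi_r = \CA \setminus \CA_Z$. Lemma \ref{lem:distinguished} (with this $Z$) and Proposition \ref{prop:JPinduced} guarantee that $H_0$ is distinguished for $\sigma$, that $\R : \CA_Z \to \CA''$ is bijective, and that $\sigma'$ and $\sigma''$ are nice partitions of $\CA'$ and $\CA''$. By Definition \ref{def:indfactored}(ii), it remains to establish $(\CA', \sigma'), (\CA'', \sigma'') \in \CIFAC$.

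For $(\CA'', \sigma'')$ I would invoke the standard lattice isomorphism $L(\CA_Z) \to L(\CA'')$, $X \mapsto X \cap H_0$, available because $Z$ is modular in $L(\CA)$ and $H_0 \notin \CA_Z$ (compare the references in Remark \ref{rem:distinguished}). The arrangement $\CA_Z$ is itself supersolvable of rank $r-1$ with chain $V < X_1 < \ldots < X_{r-1}$, since each $X_i$ with $i < r$ satisfies $X_i \supseteq Z$ and therefore lies in $L(\CA_Z)$ and remains modular there. Transferring this chain through the isomorphism makes $\CA''$ supersolvable of rank $r-1$, and a short calculation identifies its Proposition \ref{prop:ssfactored} blocks as $\R(\pi_1), \ldots, \R(\pi_{r-1})$. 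Their reverse is exactly $\sigma'' = (\R(\pi_{r-1}), \ldots, \R(\pi_1))$, so the induction hypothesis applies since $|\CA''| = |\CA_Z| < |\CA|$.

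For $(\CA', \sigma')$, if $\pi_r = \{H_0\}$ then $\CA' = \CA_Z$, whose reversed Proposition \ref{prop:ssfactored} partition is already $\sigma'$. If $|\pi_r| \ge 2$, then $\CA'$ has the same rank $r$ with centre $T_{\CA'} = T_\CA$ (realised as $H_1 \cap Z$ for any $H_1 \in \pi_r \setminus \{H_0\}$), and the chain $V < X_1 < \ldots < X_r$ still lies in $L(\CA')$. The main obstacle is to verify that each $X_i$ remains modular in $L(\CA')$, since a general deletion from a supersolvable arrangement need not be supersolvable. For $Y \in L(\CA')$ and $i < r$, the sum $X_i + Y$ lies in $L(\CA)$ by modularity of $X_i$ there, and since $X_i + Y \supseteq X_i \supseteq Z$, its canonical representation as an intersection of hyperplanes uses only hyperplanes in $\CA_Z \subseteq \CA'$, placing $X_i + Y$ in $L(\CA')$; modularity of $X_r = T_\CA$ in $L(\CA')$ is automatic. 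Hence $\CA'$ is supersolvable with the same chain, whose Proposition \ref{prop:ssfactored} partition is $(\pi_1, \ldots, \pi_{r-1}, \pi_r \setminus \{H_0\})$. Its reverse is $\sigma'$, the induction hypothesis closes this case, and Definition \ref{def:indfactored}(ii) then assembles these ingredients into $(\CA, \sigma) \in \CIFAC$.
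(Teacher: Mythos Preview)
Your proof is correct and follows essentially the same strategy as the paper's: induction on $|\CA|$, choosing $H_0$ in the top block $\pi_r = \CA \setminus \CA_{X_{r-1}}$, invoking Lemma~\ref{lem:distinguished} and Proposition~\ref{prop:JPinduced}, and then applying the inductive hypothesis to $\CA'$ and $\CA''$. The only differences are expository: you make the block reordering explicit (the paper handles it tacitly by writing $\R:\CA\setminus\pi_r\to\CA''$), and where the paper simply cites \cite[Lem.~2.62]{orlikterao:arrangements} for the supersolvability of $\CA'$ and $\CA''$ with the induced chains, you supply direct verifications of those facts---your modular-transposition argument for $\CA''$ and your check that each $X_i$ stays modular in $L(\CA')$ are exactly what underlies that lemma. (A small citation slip: the lattice isomorphism $L(\CA_Z)\to L(\CA'')$ is discussed in the unnumbered paragraph following Lemma~\ref{lem:distinguished}, not in Remark~\ref{rem:distinguished}.)
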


\begin{proof}
Suppose that $\CA$ is supersolvable and 
let $V = X_0 < X_1 < \ldots < X_{r-1} < X_r = T_\CA$
be a  maximal chain of modular elements $X_i$ in $L(\CA)$.
Define $\pi_i = \CA_{X_i} \setminus \CA_{X_{i-1}}$
for $1 \le i \le r$. Then
$\pi = (\pi_1, \ldots, \pi_r)$ is nice for $\CA$, 
by Proposition \ref{prop:ssfactored}. 

We prove that $\pi$ is an inductive factorization of $\CA$ 
by induction on $n = |\CA|$. 
If $\CA = \Phi_\ell$, there is nothing to show.
So suppose that $n > 0$
and that the result holds for 
all supersolvable arrangements with less than $n$ hyperplanes.  

Let $H_0 \in \CA$ be a complement of $X_{r-1}$ in $V$ and let 
$(\CA, \CA', \CA'')$ be the triple with respect to $H_0$.
It follows from Lemma \ref{lem:distinguished} 
and Proposition \ref{prop:JPinduced}
that $\pi'$ is nice for $\CA'$, 
$\R : \CA \setminus\pi_r \to \CA''$ is bijective,
and $\pi''$ is nice for $\CA''$.

Thanks to 
\cite[Lem.\ 2.62(1)]{orlikterao:arrangements}, 
$\CA''$ is supersolvable and 
$\pi''$ is the nice partition afforded by the 
maximal chain of modular elements in $L(\CA'')$ 
induced by the given maximal chain in $L(\CA)$ above.
Since $|\CA''| < n$, it follows from our 
induction hypothesis that $\pi''$ is an inductive 
factorization of $\CA''$.
Likewise, by 
\cite[Lem.\ 2.62(2)]{orlikterao:arrangements}, 
$\CA'$ is supersolvable and 
$\pi'$ is the nice partition stemming from the 
maximal chain of modular elements in $L(\CA')$ 
induced by the given maximal chain in $L(\CA)$ above.
(Note that it might be the case that $r(\CA') = r(\CA) - 1$.)
Since $|\CA'| < n$, it follows from our 
induction hypothesis that $\pi'$ is an inductive 
factorization of $\CA'$.
Thus, by Definition \ref{def:indfactored}, 
$\pi$ is an inductive factorization of $\CA$, as desired.
\end{proof}

\begin{remark}
\label{rem:dim2}
Since any $1$- and $2$-arrangement is supersolvable,
by Remark \ref{rem:2-arr}, 
each such is inductively factored, by 
Proposition \ref{prop:ssindfactored}.
\end{remark}

In contrast to $2$-arrangements, 
a $3$-arrangement need not be factored.
It is easily seen that a $3$-arrangement
with at most $3$ hyperplanes is always
inductively factored.
However, already with $4$ hyperplanes
it need not even be factored, 
as our next example shows.

\begin{example}
\label{ex:notfactored}
One easily checks that 
the $3$-arrangement $\CA$ with defining polynomial
$Q(\CA) = xyz(x+y-z)$
does not admit a nice 
partition; neither is it free, see \cite[Ex.\ 4.34]{orlikterao:arrangements}.
This is the smallest such example. 
Simply by 
adding only one additional hyperplane
we obtain an inductively factored arrangement,
see Example \ref{ex:ot454}.
\end{example}

The similarity of 
Definitions \ref{def:indfree} and \ref{def:indfactored}  
is not a coincidence. Indeed, Jambu and Paris showed that 
inductively factored arrangements are inductively free
\cite[Prop.\ 2.2]{jambuparis:factored}.
(Jambu and Paris only claimed freeness but their 
proof actually does give the stronger result.)

\begin{proposition}
\label{prop:indfactoredindfree}
Let $\pi = (\pi_1, \ldots, \pi_r)$ be an inductive factorization of $\CA$. 
Then $\CA$ is inductively free with exponents 
$\exp \CA = \{0^{\ell-r}, |\pi_1|, \ldots, |\pi_r|\}$.
\end{proposition}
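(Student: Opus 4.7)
The plan is to argue by induction on $n = |\CA|$. The base case $\CA = \Phi_\ell$ (with $r = 0$) is immediate since $\Phi_\ell \in \CIF$ with $\exp \Phi_\ell = \{0^\ell\}$. So suppose $|\CA| > 0$ and the proposition holds for all arrangements of strictly smaller cardinality. By Definition \ref{def:indfactored}(ii), there exists a hyperplane $H_0 \in \pi_1$ such that $\R: \CA \setminus \pi_1 \to \CA''$ is bijective and $(\CA', \pi')$ and $(\CA'', \pi'')$ both lie in $\CIFAC$. Since an inductively factored arrangement is in particular nice (proved inductively using Theorem \ref{thm:add-del-factored}), Corollary \ref{cor:teraofactored}(i) can be applied to $\pi'$ and $\pi''$ to relate the number of nonempty blocks to the rank.

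Now I would compute the exponents predicted by the inductive hypothesis. Because $\R$ is bijective, $|\pi_i''| = |\pi_i|$ for $i \ge 2$, and $\pi''$ has $r-1$ nonempty blocks; hence $r(\CA'') = r-1$ and by induction
\[
\exp \CA'' = \{0^{(\ell-1)-(r-1)}, |\pi_2|, \ldots, |\pi_r|\} = \{0^{\ell-r}, |\pi_2|, \ldots, |\pi_r|\}.
\]
For $\CA'$ there are two cases according to whether $|\pi_1| > 1$ or $|\pi_1| = 1$. If $|\pi_1| > 1$, then $\pi'$ retains all $r$ blocks (with $|\pi_1'| = |\pi_1|-1$ and $|\pi_i'| = |\pi_i|$ for $i \ge 2$), so $r(\CA') = r$ and by induction
\[
\exp \CA' = \{0^{\ell-r}, |\pi_1|-1, |\pi_2|, \ldots, |\pi_r|\}.
\]
If instead $|\pi_1| = 1$, then $\pi' = (\pi_2, \ldots, \pi_r)$ has only $r-1$ blocks, so $r(\CA') = r-1$ and
\[
\exp \CA' = \{0^{\ell-r+1}, |\pi_2|, \ldots, |\pi_r|\}.
\]

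In both cases one checks directly that $\exp \CA'' \subseteq \exp \CA'$: in the first case $\exp \CA'$ is obtained from $\exp \CA''$ by replacing one $0$ with $|\pi_1|-1 \ge 1$, and in the second case $\exp \CA'$ and $\exp \CA''$ differ by a single zero. Either way, writing $b_\ell := |\pi_1|$ and $\{b_1, \ldots, b_{\ell-1}\} := \exp \CA''$, we have $\exp \CA' = \{b_1, \ldots, b_{\ell-1}, b_\ell - 1\}$. Since $\CA'$ and $\CA''$ are both inductively free by the induction hypothesis and $\exp \CA'' \subseteq \exp \CA'$, Definition \ref{def:indfree}(ii) yields that $\CA \in \CIF$, and Theorem \ref{thm:add-del} gives
\[
\exp \CA = \{b_1, \ldots, b_\ell\} = \{0^{\ell-r}, |\pi_1|, |\pi_2|, \ldots, |\pi_r|\},
\]
as claimed.

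The only slightly delicate point in this plan is the bookkeeping of ranks and zeros in the two cases $|\pi_1| = 1$ and $|\pi_1| > 1$; once the correct number of zero exponents on each side is pinned down via Corollary \ref{cor:teraofactored}(i), the containment $\exp \CA'' \subseteq \exp \CA'$ and the addition formula of Theorem \ref{thm:add-del} drop out uniformly. Everything else—the bijectivity of $\R$, the existence of the distinguished $H_0$, and the reductions to $\CA'$ and $\CA''$—is built directly into Definition \ref{def:indfactored}.
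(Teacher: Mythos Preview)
Your proof is correct and follows essentially the same inductive approach as the paper's. The paper treats your two cases uniformly (since $|\pi_1|-1=0$ is permitted as an exponent), and a small slip in your prose---$\exp \CA'$ is obtained from $\exp \CA''$ by \emph{adjoining} $|\pi_1|-1$, not by replacing a zero, as $|\exp \CA'| = \ell > \ell-1 = |\exp \CA''|$---does not affect the argument, since the containment $\exp \CA'' \subseteq \exp \CA'$ is immediate from your displayed formulas.
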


\begin{proof}
We argue by induction on $|\CA|$. 
If $|\CA| = 0$, there is nothing to show.
So suppose that $|\CA| = n > 0$ and that the result holds for 
all inductively factored arrangements with less than $n$ hyperplanes.  
Let $H_0 \in \pi_1$ be a distinguished hyperplane with respect to $\pi$. 
Then, by Definition \ref{def:indfactored}, 
$\pi'$ and $\pi''$ are inductive factorizations of $\CA'$ and $\CA''$.
As $|\CA'|, |\CA''| < n$, 
it follows from our induction hypothesis 
that both $\CA'$ and $\CA''$ are inductively free with 
exponents 
$\exp \CA' = \{0^{\ell-r}, |\pi_1|-1, |\pi_2|, \ldots, |\pi_r|\}$
and 
$\exp \CA'' = \{0^{\ell-r}, |\pi_2''|, \ldots, |\pi_r''|\}$.
Thanks to Proposition \ref{prop:JPinduced},
$\R : \CA \setminus \pi_1 \to \CA''$ is bijective, 
and so  
$\exp \CA'' 
= \{0^{\ell-r}, |\pi_2|, \ldots, |\pi_r|\}$.
Consequently, 
$\exp \CA''  \subset \exp \CA'$, and so
$\CA$ is inductively free.
Finally, $\exp \CA = \{0^{\ell-r}, |\pi_1|, \ldots, |\pi_r|\}$,
by Theorem \ref{thm:add-del}.
\end{proof}

\begin{remark}
\label{rem:indfactored}
(i). 
Owing to 
Proposition \ref{prop:indfactoredindfree} and 
Remark \ref{rem:factorediscombinatorial},
the question whether $\CA$ is inductively factored is a combinatorial 
property and only depends on the lattice $L(\CA)$. 

(ii).
The converse of Proposition \ref{prop:indfactoredindfree} is false.
Terao has already noted that the reflection arrangement $\CA(D_4)$  of
the Coxeter group of type $D_4$ is not factored.
But $\CA(D_4)$ is inductively free, 
\cite[Ex.~2.6]{jambuterao:free}.

(iii).
It follows from Proposition \ref{prop:indfactoredindfree}
that Ziegler's example \cite[Ex.\ 4.1]{ziegler}
of a factored $3$-arrangement in 
characteristic $3$ is not inductively factored, as it is not free.

(iv).
An inductively free and factored arrangement need not be
inductively factored, see Example \ref{ex:indfreefactored-notindfactored}.
\end{remark}

\begin{remark}
\label{rem:indtable}
(i). 
If $\CA$ is inductively factored, then  $\CA$ is inductively free,
by Proposition \ref{prop:indfactoredindfree}.
The latter can be described by a so called 
\emph{induction table}, cf.~\cite[\S 4.3, p.~119]{orlikterao:arrangements}.
In this process we start with an inductively free arrangement
(frequently $\Phi_\ell$) and add hyperplanes successively ensuring that 
part (ii) of Definition \ref{def:indfree} is satisfied.
This process is referred to as \emph{induction of hyperplanes}.
This procedure amounts to 
choosing a total order on $\CA$, say 
$\CA = \{H_1, \ldots, H_n\}$, 
so that each of the subarrangements 
$\CA_0 := \Phi_\ell$, $\CA_i := \{H_1, \ldots, H_i\}$
and each of the restrictions $\CA_i^{H_i}$ is inductively free
for $i = 1, \ldots, n$.
In the associated induction table we record in the $i$-th row the information 
of the $i$-th step of this process, by 
listing $\exp \CA_i' = \exp \CA_{i-1}$, 
the defining form $\alpha_{H_i}$ of $H_i$, 
as well as $\exp \CA_i'' = \exp \CA_i^{H_i}$, 
for $i = 1, \ldots, n$.

The proof of Proposition \ref{prop:indfactoredindfree} shows 
that if $\pi$ is an inductive factorization of $\CA$
and $H_0 \in \CA$ is distinguished with respect to $\pi$, then 
the triple $(\CA, \CA', \CA'')$ with respect to $H_0$ 
is a triple of inductively free arrangements.
Thus an induction table of $\CA$ can be constructed,
compatible with suitable inductive factorizations of 
the subarrangments $\CA_i$.

(ii).
Now suppose $\CA$ is inductively free and let 
$\CA = \{H_1, \ldots, H_n\}$ be a 
choice of a total order on $\CA$, 
so that each of the subarrangements 
$\CA_0 := \Phi_\ell$, $\CA_i := \{H_1, \ldots, H_i\}$
and each of the restrictions $\CA_i^{H_i}$ is inductively free
for $i = 1, \ldots, n$.

Then, starting with the empty partition for $\Phi_\ell$, we can attempt to
build inductive factorizations $\pi_i$ of $\CA_i$
consecutively, resulting in an inductive factorization 
$\pi = \pi_n$ of $\CA = \CA_n$.
This is achieved by invoking Theorem \ref{thm:add-del-free-factored}
repeatedly in order to derive that each $\pi_i$ is an inductive factorization of $\CA_i$.
For this it suffices to check
the conditions in part (iii) of Theorem \ref{thm:add-del-free-factored}, i.e.,  that 
$\exp \CA_i''$ is given by the sizes of the parts of $\pi_i$ not containing $H_i$
and that the induced partition $\pi_i''$ of $\CA_i''$ is a factorization.
The fact that $H_i$ is distinguished
with respect to $\pi_i$ is part of the inductive hypothesis, as
$\pi_i' = \pi_{i-1}$ is an inductive factorization of $\CA_i' = \CA_{i-1}$.

We then add the inductive factorizations $\pi_i$
of $\CA_i$ as additional data into an induction table for $\CA$
(or else record to which part of $\pi_{i-1}$ the new hyperplane $H_i$ is appended to).
The data in such an extended induction table 
together with the ``Addition'' part of Theorem \ref{thm:add-del-free-factored}
then proves that $\CA$ is 
inductively factored. 
We refer to this technique as \emph{induction of factorizations} and the
corresponding table as an \emph{induction table of factorizations} for $\CA$.
Note that listing $\exp \CA_i'$ and $\exp \CA_i''$ in these tables is 
redundant, as the non-zero exponents are simply given by the cardinalities of the parts of
$\pi_i'$ and $\pi_i''$, respectively, cf.\ \eqref{eq:exp}.

We illustrate this induction of factorizations procedure in Tables \ref{table0}, \ref{table1} 
and \ref{indtable:non-HIFAC}
in Examples \ref{ex:ot454}, \ref{ex:d13} and \ref{ex:notheredfactored}, respectively.
As for the usual induction process for free arrangements, 
induction for factorizations is sensitive to the chosen order on $\CA$, 
cf.\ Example \ref{ex:ot454}.

It is worth noting that when using this inductive technique, we end up showing
that $\CA$ is inductively factored without knowing 
\emph{a priori} that $\CA$ is factored. 

Clearly, if $\CA$ is inductively free but not factored, then this 
induction of factorizations must terminate at a proper
subarrangement of  $\CA$, cf.\ Remark \ref{rem:indfactored}(ii).
\end{remark}

\begin{remark}
\label{rem:inducednotindfree}
Suppose $\CA$ and $H_0 \in \CA$ are such that  
$\CA$ is inductively factored but the deletion $\CA'$ is not.
Suppose that $\pi$ is a partition of $\CA$ such that 
$\pi'$ is nice  for $\CA'$ and that the conditions in  
Theorem \ref{thm:add-del-factored}(iii)  are satisfied.
Then, by Theorem \ref{thm:add-del-factored}, 
$\pi$ is nice for $\CA$, but it need not be 
the case that $\pi$ is  
an inductive factorization of $\CA$.
We present such an instance in the second part of Example \ref{ex:g333}.
\end{remark}

\subsection{Applications and Examples}
\label{subsec:examples}

In our first example we illustrate how to construct  
an induction table of factorizations as outlined in
Remark \ref{rem:indtable}(ii).
This example also shows that the order of hyperplanes
matters.

\begin{example}
\label{ex:ot454}
Let $\CA$ be the $3$-arrangement with defining polynomial  
\[
Q(\CA) = x y z (x + y) (x + y - z).
\] 
It follows from \cite[Ex.\ 4.54]{orlikterao:arrangements}
that $\CA$ is inductively free with $\exp \CA = \{1,2,2\}$. 
For simplicity, we enumerate the five 
hyperplanes of $\CA$ in the order their linear forms appear as factors in $Q(\CA)$, 
i.e.\ $H_1 = \ker(x), H_2 = \ker(y), \ldots, H_5 = \ker(x + y - z)$.
We claim that 
\[
\pi = (\pi_1, \pi_2, \pi_3) = (\{H_1\}, \{H_2, H_4\}, \{H_3, H_5\})
\]
is an inductive factorization of $\CA$.
Using the induction table of $\CA$ from  
\cite[Table 4.1]{orlikterao:arrangements}, 
we show this 
using the Addition-Deletion Theorem \ref{thm:add-del-free-factored}
by exhibiting an induction table of factorizations for $\CA$
in Table \ref{table0}. 
We indicate the corresponding inductive factorizations of $\CA_i'$ and $\CA_i''$
in the columns headed by $\pi_i'$ and $\pi_i''$, respectively. 
In this case it is particularly easy to verify 
the conditions from Theorem \ref{thm:add-del-free-factored}(iii): i.e.,  
$\exp \CA_i''$ is the correct multiset 
and the induced partition $\pi_i''$ of $\CA_i''$ is a factorization, 
cf.\  Remark \ref{rem:dim2}.
By $\bar H_i$ we indicate the image of $H_i$ under a restriction map.

\begin{table}[ht!b]
\renewcommand{\arraystretch}{1.5}
\begin{tabular}{lllll}\hline
$\pi_i'$ &  $\exp\CA_i'$ & $\alpha_{H_i}$ & $\pi_i''$ & $\exp\CA_i''$\\ 
\hline\hline
$\varnothing$ &  $0, 0, 0$ &  $x$ & $\varnothing$ & $0,0$ \\
$\{H_1\}$ &  $0, 0, 1$ &  $y$ & $\{\bar H_1\}$ & $0,1$ \\
$\{H_1\}, \{H_2\}$ &  $0, 1, 1$ &  $z$ & $\{\bar H_1\}, \{\bar H_2\}$ & $1,1$ \\
$\{H_1\}, \{H_2\}, \{H_3\}$ &  $1, 1, 1$ &  $x+y$ & $\{\bar H_1\}, \{\bar H_3\}$ & $1,1$ \\
$\{H_1\}, \{H_2, H_4\}, \{H_3\}$ &  $1, 1, 2$ &  $x+y-z$ & $\{\bar H_1\}, \{\bar H_2, \bar H_4\}$ & $1,2$ \\
$\{H_1\}, \{H_2, H_4\}, \{H_3, H_5\}$ &  $1, 2, 2$ && \\ 
\hline
\end{tabular}
\smallskip
\caption{Induction Table of Factorizations for $\CA$}
\label{table0}
\end{table}

Example \ref{ex:notfactored} shows that the order 
of the last two hyperplanes can't be reversed.
The fact that $\CA$ is inductively factored can also be 
deduced directly from 
Proposition \ref{prop:ssindfactored}. For, one checks that
\[
\BBK^3 < H_1 < H_1 \cap H_2 \cap H_4 < \{0\}
\]
is a maximal chain of modular elements in $L(\CA)$.
\end{example}

Our next example illustrates 
another induction table of factorizations as explained in
Remark \ref{rem:indtable}(ii).
This example shows that 
the converse of Proposition \ref{prop:ssindfactored} is false.

\begin{example}
\label{ex:d13}
Let $\CA = \CD_3^1$ be the $3$-arrangement with defining polynomial  
\[
Q(\CD_3^1) = x (x - y) (x + y) (x - z) (x + z) (y - z) (y + z).
\] 
Note that $\CA$ is the restriction of a Coxeter arrangement of 
type $D_4$ to a hyperplane, cf.\ \cite[Ex.\ 6.83]{orlikterao:arrangements}.
By \cite[Ex.\ 5.5]{jambuterao:free}, $\CA$ is not supersolvable. 
In particular, $L(\CA)$ does not admit a modular element of
rank $2$, cf.\ \cite[Lem.\ 2.5]{hogeroehrle:super}.
For simplicity, we enumerate the seven hyperplanes of $\CA$ in the order their linear forms appear as factors in $Q(\CA)$, 
i.e.\ $H_1 = \ker(x), H_2 = \ker(x-y), \ldots, H_7 = \ker(y + z)$.
Thanks to \cite[Ex.~2.6]{jambuterao:free}, 
$\CA$ is inductively free with $\exp \CA = \{1,3,3\}$.
We claim that 
\[
\pi = (\pi_1, \pi_2, \pi_3) = (\{H_1\}, \{H_2, H_3, H_6\}, \{H_4, H_5, H_7\})
\]
is an inductive factorization of $\CA$.
We show this 
using the Addition-Deletion Theorem \ref{thm:add-del-free-factored}
by exhibiting an induction table of factorizations for $\CA$, cf.\ Remark \ref{rem:indtable}(ii).
By Remark \ref{rem:dim2} and Proposition \ref{prop:product-indfactored},
$\CA_3:= \CD_2^1 \times \Phi_1$ is an inductively factored subarrangement of 
$\CA$ with defining polynomial $x(x-y)(x+y)$. 
We start our induction table with $\CA_3$.
We indicate the corresponding inductive factorizations of $\CA_i'$
and $\CA_i''$
in the columns headed by $\pi_i'$ and $\pi_i''$, respectively. 
At each step, we have to verify 
the conditions in Theorem \ref{thm:add-del-free-factored}(iii): i.e.,  that 
$\exp \CA_i''$ is the correct multiset 
and that the induced partition $\pi_i''$ of $\CA_i''$ is a factorization.
 
For brevity, we denote a
partition of $\CA$ such as  
$(\{H_1\}, \{H_2, H_3, H_6\}, \{H_4, H_5, H_7\})$,
simply by the sets of the corresponding indices, 
i.e., $\{1\}, \{2, 3, 6\}, \{4, 5, 7\}$.
Also, the notation $\bar1$ indicates the image $\R_i(H_1)$ in $\pi_i''$.  

\begin{table}[ht!b]
\renewcommand{\arraystretch}{1.5}
\begin{tabular}{lllll}\hline
$\pi_i'$ &  $\exp\CA_i'$ & $\alpha_{H_i}$ & $\pi_i''$ & $\exp\CA_i''$\\ 
\hline\hline
$\{1\}, \{2, 3\}$ &  $0, 1,2$ &  $x - z$ & $\{\bar1\}, \{\bar2, \bar3\}$ & $1,2$ \\
$\{1\}, \{2, 3\},\{4\}$ & $1, 1,2$ & $x + z$ & $\{\bar1\}, \{\bar2, \bar3\}$ & $1,2$ \\
$\{1\}, \{2, 3\},\{4,5\}$ & $1,2, 2$ & $y-z$ & $\{\bar1\}, \{\bar4, \bar5\}$ & $1,2$ \\
$\{1\}, \{2, 3, 6\},\{4,5\}$ & $1,2,3$ & $y+z$ & $\{\bar1\}, \{\bar2, \bar3, \bar6\}$ &$1,3$ \\
$\{1\}, \{2, 3, 6\},\{4,5,7\}$ & $1,3, 3$ && \\ 
\hline
\end{tabular}
\smallskip
\caption{Induction Table of Factorizations for $\CD^1_3$}
\label{table1}
\end{table}

The exponents in Table \ref{table1} can be determined using 
Theorem \ref{thm:add-del} and \cite[Prop.\ 6.82]{orlikterao:arrangements}.
Note, as a $2$-arrangement each $\CA_i''$ is  inductively factored,
by Remark \ref{rem:dim2}.

Noteworthy is also that this example illustrates
that there is no obvious addition-deletion theorem for 
supersolvable arrangements,
as both $\CA'$ and $\CA''$ are supersolvable while
$\CA$ is not. 
\end{example}

Our next example 
(which was already noted by Jambu and Paris 
\cite{jambuparis:factored})
demonstrates that 
a free and factored arrangement need not be 
inductively factored. 
Moreover, it shows that 
the condition on $\exp \CA''$ in 
Theorem \ref{thm:add-del-free-factored}(iii)
is necessary for the ``Deletion'' statement.
In the second part of this example we illustrate that 
even if the underlying arrangement is inductively factored,
``Addition'' in Theorem \ref{thm:add-del-factored}
does not necessarily yield an inductive factorization, 
cf.\  Remark \ref{rem:inducednotindfree}.

\begin{example}
\label{ex:g333}
Let $\CA$ be the reflection arrangement 
of the monomial group $G(3,3,3)$.
Let $x, y$ and $z$ be the indeterminates of $S$ and let 
$\zeta = e^{2\pi i/3}$ be a primitive $3$rd root of unity. 
Then the defining polynomial of $\CA$ is given by 
\[
Q(\CA) = (x-y)(x-\zeta y)(x-\zeta^2 y)(x-z)(x-\zeta z)(x-\zeta^2 z)(y-z)(y-\zeta z)(y-\zeta^2 z).
\]
For simplicity, we enumerate the nine hyperplanes of $\CA$ in the order their linear forms appear as factors in $Q(\CA)$, 
i.e.\ $H_1 = \ker(x-y), \ldots, H_9 = \ker(y-\zeta^2 z)$.
One checks that 
\[
\pi = (\pi_1, \pi_2, \pi_3) = (\{H_1,H_2, H_4, H_5\}, \{H_7\}, \{H_3, H_6, H_8, H_9\})
\]
is a factorization of $\CA$. 

Noting that $G(3,3,3)$ acts transitively on $\CA$, \cite[\S 6.4]{orlikterao:arrangements},
forming a triple with respect to any choice of a hyperplane in $\CA$ gives
the following exponents
\begin{equation}
\label{eq:g333}
\exp \CA'' = \{1,3\} \not\subseteq \exp \CA = \{1,4,4\}.
\end{equation}
One checks that for any choice of hyperplane $H_0$ in $\pi_1$ or $\pi_3$ above,
$\pi''$ is a factorization of $\CA''$. As a result of 
Remark \ref{rem:exponents} and 
\eqref{eq:g333}, the condition on $\exp \CA''$ in 
Theorem \ref{thm:add-del-free-factored}(iii) is never satisfied.
Thus, although both 
$\CA$ and $\CA''$ are factored and free, there is no distinguished 
hyperplane in $\CA$ with respect to $\pi$.
In particular, $\CA$ is not inductively factored.
Indeed, as a consequence of \eqref{eq:g333}, $\CA'$ is not free 
and thus $\CA$ is not inductively free, 
cf.\ \cite[Ex.\ 2.19]{hogeroehrle:indfree}.

Now we apply Theorem \ref{thm:add-del-factored} 
to obtain a nice partition of an extended arrangement.
For that, set $H_{10} := \ker x$ and  
$\hat\CA = \CA \cup \{H_{10}\}$. 
Let $\hat \pi = (\hat \pi_1, \hat \pi_2, \hat \pi_3)  = (\pi_1 \cup \{ H_{10}\}, \pi_2, \pi_3)$ and let
$(\hat\CA, \hat\CA' = \CA, \hat\CA'')$ be the triple
associated with $H_{10}$.
One checks that 
the restriction map $\R = \R_{\hat \pi, H_{10}} \colon \hat\CA \setminus \hat\pi_1 \rightarrow \hat\CA''$ is 
bijective and $\hat \pi'' = (\R(\hat\pi_2), \R(\hat\pi_3))$ is a factorization of $\hat\CA''$. 
Therefore,  by Theorem \ref{thm:add-del-factored},
\[
\hat \pi = (\{H_1,H_2, H_4, H_5, H_{10}\}, \{H_7\}, \{H_3, H_6, H_8, H_9\}) 
\]
is a nice partition of $\hat\CA$.
One can show that $\hat\CA$ is inductively factored, the proof is
similar to the one for $\CD_3^1$ in Example \ref{ex:d13}.
However, $\hat\pi$ is not an inductive factorization of $\hat\CA$.
For, deleting $H_{10}$ leads to the non-inductive factorization $\pi$ of $\CA$ and 
the only other hyperplanes $H$ that can be deleted from $\hat\CA$ such that $\hat\CA\setminus\{H\}$ is free
are $H_7, H_8$ and $H_9$. In each case the 
exponents of $\hat\CA\setminus\{H\}$ are $\{1,4,4\}$. 
However, removing any of these hyperplanes from $\hat \pi$ results in 
a partition $\hat \pi\setminus\{H\}$ whose parts have cardinalities 
$\{4,5\}$, $\{1,3,5\}$ and $\{1,3,5\}$, respectively. 
It thus follows from Remark \ref{rem:exponents} that  
$\hat \pi\setminus\{H\}$  is not nice for $\hat\CA\setminus\{H\}$.
In particular, $\hat \pi$ can't be an inductive factorization of $\hat\CA$.
\end{example}

As a further application of our
Addition-Deletion Theorem \ref{thm:add-del-free-factored}, 
we present two examples of an arrangement $\CA$ which 
is even inductively free and factored but not inductively factored.
Our first example also gives an instance where there is a unique nice 
partition of $\CA$.
While the first example can't be realized as the intersection lattice
of an arrangement over $\BBC$, the second 
one is based on 
the lattice of the reflection arrangement $\CA(G(3,3,3))$
from Example \ref{ex:g333}.

\begin{example}
\label{ex:indfreefactored-notindfactored}
Let $\BBK = \BBF_4$ be the field of $4$ elements and let
$\zeta$ be a primitive third root of unity in $\BBK$, 
so that  $\zeta^2 + \zeta + 1 = 0$.
Let $V = \BBK^3$ and let
$x, y$ and $z$ be the coordinate functions of $V$. 
Let $\CA$ be the arrangment in $V$ given by 
\begin{align*}
Q(\CA) = & \ (x-y)(x-\zeta y)(x-z)(x-\zeta^2 y)(y-\zeta z)y(y-z)\\
& \ (x+y+\zeta^2 z)(y-\zeta^2 z)(x-\zeta z)(x-\zeta^2 z).
\end{align*}
For simplicity, we enumerate the eleven
hyperplanes of $\CA$ in the order their 
linear forms appear as factors in $Q(\CA)$, 
i.e.\ $H_1 = \ker(x-y), \ldots, H_{11} = \ker(x-\zeta^2 z)$.
One checks that $\CA$ is inductively free with
$\exp \CA = \{1,4,6\}$, where  
an inductive chain of hyperplanes in $\CA$ is given by 
$H_1, \ldots, H_{11}$ in this order, cf.\ 
Remark \ref{rem:indtable}(i).

Moreover, one checks that 
\[
\pi = (\pi_1, \pi_2, \pi_3) = 
(\{H_{11}\}, \{H_1,H_3, H_9, H_{10}\}, \{H_2, H_4, H_5, H_6, H_7, H_8\})
\]
is a factorization of $\CA$ and one can show 
that this is the only nice partition of $\CA$.
We omit the details.

However, $\pi$ is not an inductive factorizaton of $\CA$.
For, a deletion $\CA\setminus \{H\}$ is free if and only if 
$H \in \{H_1, H_3, H_6, H_8, H_9, H_{10}, H_{11}\}$.
For each of these instances we have 
$|\CA^H| = 5$ and consequently 
$\exp \CA^H = \{1,4\}$ and 
$\exp \CA\setminus\{H\} = \{1,4,5\}$.
Therefore, for $\CA\setminus\{H\}$ to be inductively factored,  
the Addition-Deletion Theorem \ref{thm:add-del-free-factored}
implies that $H$ must belong to $\pi_3$, i.e.\
$H = H_6$ or $H = H_8$.
Using the deletion part of 
Theorem \ref{thm:add-del-free-factored} twice, 
we can remove both $H_6$ and $H_8$ from $\pi_3$
and so we obtain a factorization of $\CA\setminus\{H_6, H_8\}$.
However, this resulting arrangement is not 
inductively free 
(it is lattice isomorphic to $\CA(G(3,3,3))$
from Example \ref{ex:g333})
and so it is not inductively factored,
by Proposition \ref{prop:indfactoredindfree}.
As a result, $\CA$ is not inductively factored either.

Finally, we observe 
that the intersection lattice $L(\CA)$ of the 
arrangment $\CA$ above cannot be realized over $\BBC$.
For a contradiction, suppose that $\CB$ is a complex $3$-arrangement
with $L(\CB)$ lattice isomorphic to $L(\CA)$. 
Then in particular, $\CB$ is 
inductively free with $\exp \CB = \{1,4,6\}$.
Then, since $\CB$ is free with $\exp \CB = \{1,4,6\}$, 
it follows from \cite[Thm.\ 11]{ziegler:multiarrangements}
(cf.\ \cite[Thm.\ 1.34(ii)]{yoshinaga:free})
that each multi-restriction of $\CB$ is free with exponents $\{4,6\}$.
The multi-restriction of $\CB$ with respect to a fixed $H$ from $\CB$ 
is given by $(\CB^H, (2,2,2,2,2))$, where 
$(2,2,2,2,2)$ are the multiplicities of the
five hyperplanes in $\CB^H$.
Thanks to \cite[Thm.\ 1.23(iii)]{yoshinaga:free},
the exponents of $(\CB^H, (2,2,2,2,2))$ are $\{5,5\} \ne \{4,6\}$,
a contradiction.
Consequently, 
there is no $3$-arrangement $\CB$  over $\BBC$
whose lattice is isomorphic to $L(\CA)$.
\end{example}

\begin{example}
\label{ex:indfreefactored-notindfactored2}
Let $\BBK=\BBC$ and let $\zeta$ be a primitive third root of unity. 
Let $V= \BBC^3$ and let $x,y$ and $z$ be the coordinate functions of $V$.
Let $\CA$ be the arrangement in $V$ given by
\begin{align*}
Q(\CA) = & \ x(x-y)(x-\zeta y)(x-\zeta^2 y)(x-z)(x-\zeta z)(x-\zeta^2 z)\\
& \ (y-z)(y-\zeta z)(y-\zeta^2 z)(x+y+\zeta^2 z).
\end{align*}
For simplicity, we enumerate the eleven
hyperplanes of $\CA$ in the order their 
linear forms appear as factors in $Q(\CA)$, 
i.e.\ $H_1 = \ker(x), \ldots, H_{11} = \ker(x+y+\zeta^2 z)$.
One checks that $\CA$ is inductively free with
$\exp \CA = \{1,5,5\}$, where  
an inductive chain of hyperplanes in $\CA$ is given by 
$H_1, \ldots, H_{11}$ in this order, cf.\ 
Remark \ref{rem:indtable}(i).

The intersection lattice $L(\CA)$ is determined by the localizations
of rank 2:

$\{H_1,H_2,H_3,H_4\}, \{H_1,H_5,H_6,H_7\}, \{H_1,H_8\}, \{H_1,H_9\}, 
 \{H_1,H_{10}\}, \{H_1,H_{11}\}, \{H_2,H_5,H_8\}, $\\
$\{H_2,H_6,H_9\}, \{H_2,H_7,H_{10}\},\{H_2,H_{11}\}, \{H_3,H_5,H_{10}\},\{H_3,H_6,H_{8}, H_{11}\},
 \{H_3,H_7,H_9\},$ \\ 
$\{H_4,H_5,H_{9},H_{11}\}, \{H_4,H_6,H_{10}\}, \{H_4,H_7, H_{8}\}, \{H_7,H_{11}\},
\{H_8,H_9,H_{10}\}, \{H_{10},H_{11}\}$.

With these at hand it is quite easy to check whether a permutation of the
hyperplanes is a lattice automorphism or when 
a given partition of $\CA$ is actually a factorization.

We claim that $\CA$ has exactly two factorizations. 
To show this assume that $\pi = (\pi_1,\pi_2,\pi_3)$ 
is a factorization of $\CA$. 
The singleton condition in Definition \ref{def:factored}(ii) 
applied to the localizations 
$\{H_1,H_{10}\}$, $\{H_1, H_{11}\}$ and $\{H_{10}, H_{11}\}$ 
shows that $H_1$, $H_{10}$ and $H_{11}$ are in different 
parts of $\pi$. In particular, one of these three
hyperplanes has to be the singleton of $\pi$. 

Using Corollary \ref{cor:teraofactored}(iii) on the localizations 
$\{H_1,H_2,H_3,H_4\}$, $\{H_2,H_{11}\}$, 
$\{H_1,H_5,H_6,H_7\}$ and $\{H_7,H_{11}\}$, we see that $\{H_1\}$  
can't be the singleton. Else all of $H_2,H_3,H_4,H_5,H_6,H_7$, 
$H_{11}$ would 
have to be in one part of $\pi$, but the cardinalities of the 
parts $\pi_i$ are either $1$ or $5$.
There is a lattice automorphism $\sigma$ of $L(\CA)$ induced by 
$(1\;11)(2\;9\;7\;8)(3\;4\;5\;6)$ permuting the hyperplanes. 
Thus $\{H_{11}\}$ cannot be the singleton either.

Hence the only possibility is that $\pi_1 = \{H_{10}\}$ is the singleton 
part of $\pi$. Without loss we may assume that $H_1 \in \pi_2$ and
$H_{11} \in \pi_3$. Applying the singleton condition 
from Definition \ref{def:factored}(ii)
to the localizations 
$\{H_1,H_8\}$, $\{H_1,H_9\}$, $\{H_2,H_{11}\}$ and $\{H_7,H_{11}\}$, 
it follows that $H_8,H_9 \in \pi_3$ and $H_2,H_7 \in \pi_2$. 
Corollary \ref{cor:teraofactored}(iii) applied to
the localizations 
$\{H_{10},H_4,H_6\}$ and $\{H_{10},H_3,H_5\}$ shows that $\{H_4,H_6\}$ 
and $\{H_3,H_5\}$ have to be subsets of one part of $\pi$.

Therefore, the only possibilities for factorizations are 
$$\pi = (\{H_{10}\}, \{H_1,H_2,H_3, H_5, H_7\}, \{H_4,H_6,H_8,H_9,H_{11}\})$$
and 
$$\tilde \pi = (\{H_{10}\},\{H_1,H_2,H_4,H_6,H_7\},\{H_3,H_5,H_8,H_9,H_{11}\}).$$
One checks that both $\pi$ and $\tilde \pi$ are indeed factorizations of $\CA$.

The permutation $(2\;7)(3\;6)(4\;5)$ induces an automorphism of $L(\CA)$ 
by permuting the corresponding hyperplanes. This automorphism 
interchanges $\pi$ and $\tilde \pi$. Therefore,
to show that none of 
$\pi$ and $\tilde \pi$ is inductive, 
it suffices to consider the factorization $\pi$.

So we aim to show that $\pi$ is not inductive.
A deletion $\CA\setminus\{H\}$ is free if and only if 
$H \in \{H_1, H_{10}, H_{11}\}$. We must not remove $H_{10}$, since this
would contradict the singleton condition.
However, using the Addition-Deletion Theorem \ref{thm:add-del-free-factored}, 
we can remove $H \in \{H_1,H_{11}\}$ and so we get a factorization 
$\pi'$ of $\CA \setminus\{H\}$ since the corresponding restriction maps $\R$ 
are bijective. Since the automorphism $\sigma$
preserves the factorization $\pi$ (and interchanges $H_1$ and $H_{11}$), 
we only have to consider the case when $H = H_{11}$.
Now let $\CB = \CA \setminus \{H_{11}\}$ and let 
$\pi' = (\{H_{10}\}, \{H_1,H_2,H_3, H_5, H_7\}, \{H_4,H_6,H_8,H_9\})$ be
the corresponding factorization.
A deletion $\CB\setminus\{H\}$ is free if and only if 
$H \in \{H_1, H_8,H_9,H_{10}\}$. We mustn't remove $H_{10}$, since this
would contradict the singleton condition 
in Definition \ref{def:factored}(ii) again. 
Likewise $H_8$ and $H_9$ can't be removed, since
the Addition-Deletion Theorem \ref{thm:add-del-free-factored} 
would  then  imply 
that the corresponding restriction map $\R$ is bijective. But this is not
the case, for, if we remove $H_8$ we have $\R(H_2) = \R(H_5)$, else if we
remove $H_9$ we have $\R(H_3) = \R(H_7)$.
But if we remove $H=H_1$ we get a factorization of $\CA(G(3,3,3))$ which is not 
inductively free and therefore not inductively factored.
\end{example}

\subsection{Hereditarily factored Arrangements}
\label{sec:heredfactored}

In analogy to hereditary freeness and hereditary inductive freeness, 
\cite[Def.\ 4.140, p.\ 253]{orlikterao:arrangements}, 
we extend the notions of factored and inductively factored arrangements to 
incorporate restrictions.

\begin{defn}
\label{def:heredindfactored}
We say that $\CA$ is \emph{hereditarily factored}
provided $\CA^X$ is factored for every $X \in L(\CA)$
and that $\CA$ is \emph{hereditarily inductively factored}
provided $\CA^X$ is inductively factored for every $X \in L(\CA)$.
We also use the acronyms $\CHFAC$ and $\CHIFAC$ for short for these classes,
respectively.
\end{defn}

Proposition \ref{prop:ssindfactored} readily strengthens as follows.

\begin{corollary}
\label{cor:ssheredindfactored}
If $\CA$ is supersolvable, then it is 
hereditarily inductively factored.
\end{corollary}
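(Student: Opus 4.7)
The plan is to reduce the statement to Proposition \ref{prop:ssindfactored} via the stronger fact that supersolvability is itself hereditary, i.e.\ that $\CA^X$ is supersolvable for every $X \in L(\CA)$. Granting this, Proposition \ref{prop:ssindfactored} applied to each restriction $\CA^X$ shows that $\CA^X$ is inductively factored, which by Definition \ref{def:heredindfactored} is exactly the statement that $\CA$ is hereditarily inductively factored.

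To establish hereditary supersolvability, I would fix $X \in L(\CA)$ and identify $L(\CA^X)$ with the upper interval $[X, T_\CA]$ in $L(\CA)$. Starting from a maximal chain of modular elements
\[
V = X_0 < X_1 < \ldots < X_{r-1} < X_r = T_\CA
\]
in $L(\CA)$, I would form the derived chain $X = X \vee X_0 \le X \vee X_1 \le \ldots \le X \vee X_r = T_\CA$ inside $[X, T_\CA]$, where $\vee$ denotes the join in the intersection lattice. Using the modularity of each $X_i$ together with the rank identity \cite[Lem.\ 2.24]{orlikterao:arrangements}, one checks that after discarding repetitions this yields a saturated chain in $[X, T_\CA]$ of the correct length $r(\CA^X) = r(\CA) - r(X)$. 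What remains is to verify that each join $X \vee X_i$ is modular in the sublattice $[X, T_\CA]$, which is a direct computation from Definition \ref{def:super} and the modularity of $X_i$ in $L(\CA)$.

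The hard part, and the technical heart of the argument, is this last step: showing that the operation $Y \mapsto Y \vee X$ carries modular elements of $L(\CA)$ to modular elements of the upper interval $[X, T_\CA]$. This is a classical feature of supersolvable geometric lattices going back to Stanley, but verifying it in the present arrangement-theoretic setting requires a careful manipulation with the rank function of $L(\CA)$. As an alternative route that avoids this lattice-theoretic verification, one can proceed inductively: write $X = H_1 \cap \ldots \cap H_k$ with $H_i \in \CA$ and observe that $(\CA^{H_1})^X = \CA^X$ whenever $X \subseteq H_1$, so $\CA^X$ is obtained by a sequence of $k$ single-hyperplane restrictions, at each step invoking \cite[Lem.\ 2.62]{orlikterao:arrangements} after choosing the hyperplane at that stage to be complementary to a modular coatom in the current arrangement. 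Either way, once hereditary supersolvability is in hand, the corollary is immediate by Proposition \ref{prop:ssindfactored}.
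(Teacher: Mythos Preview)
Your argument is correct and follows exactly the paper's route: the paper's proof simply cites \cite[Prop.\ 3.2]{stanley:super} for the hereditary supersolvability of $\CA$ and then applies Proposition \ref{prop:ssindfactored} to each $\CA^X$, which is precisely your plan. Your first sketch (passing to the chain $X \vee X_i$ in the interval $[X,T_\CA]$) is essentially Stanley's original argument, so nothing is lost by invoking the reference directly.

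One caution on your alternative inductive route: \cite[Lem.\ 2.62]{orlikterao:arrangements} only guarantees that $\CA^{H_0}$ is supersolvable when $H_0$ is a complement of the modular coatom $X_{r-1}$, i.e.\ when $H_0 \in \CA \setminus \CA_{X_{r-1}}$. For an arbitrary $X \in L(\CA)$ there is no reason one can write $X = H_1 \cap \ldots \cap H_k$ so that at each successive stage the chosen hyperplane is a complement of a modular coatom in the current restriction; you cannot freely ``choose'' the $H_i$ since they must cut out the prescribed $X$. So that second route, as stated, has a gap---stick with the join-chain argument or the citation.
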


\begin{proof}
If $\CA$ is supersolvable, then so is 
$\CA^X$ for every 
$X \in L(\CA)$, thanks to 
\cite[Prop.\ 3.2]{stanley:super}.
Thus $\CA^X$ is inductively factored,
by Proposition \ref{prop:ssindfactored}
\end{proof}

Also, Proposition \ref{prop:indfactoredindfree}
extends to restrictions. For, if
$X \in L(\CA)$ and $\CA^X$ is inductively factored, 
then $\CA^X$ is inductively free, 
by Proposition \ref{prop:indfactoredindfree}.

\begin{corollary}
\label{cor:heredindfactoredheredindfree}
If $\CA$ is hereditarily inductively factored,
then it is hereditarily inductively free.
\end{corollary}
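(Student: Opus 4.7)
The plan is essentially to apply Proposition \ref{prop:indfactoredindfree} pointwise along the lattice $L(\CA)$, since both hypothesis and conclusion are statements indexed by elements of $L(\CA)$, and the non-hereditary implication ``inductively factored $\Rightarrow$ inductively free'' is already known from that proposition.

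More concretely, suppose $\CA$ is hereditarily inductively factored. I would fix an arbitrary $X \in L(\CA)$. By Definition \ref{def:heredindfactored}, the restriction $\CA^X$ is inductively factored, so by definition of $\CIFAC$, there is a partition $\pi$ of $\CA^X$ with $(\CA^X, \pi) \in \CIFAC$. Proposition \ref{prop:indfactoredindfree}, applied to this pair, then yields that $\CA^X$ is inductively free (indeed with exponents determined by $|\pi_1|, \ldots, |\pi_r|$, though only the inductive freeness is needed here). Since $X \in L(\CA)$ was arbitrary, every restriction $\CA^X$ is inductively free, and hence $\CA$ is hereditarily inductively free by Definition \ref{def:heredindfactored} (read together with the analogous notion for inductively free arrangements from \cite[Def.\ 4.140]{orlikterao:arrangements}).

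No real obstacle is expected: the whole content has been packaged into Proposition \ref{prop:indfactoredindfree}, and the corollary is a direct translation of that proposition to the ``hereditary'' setting via quantification over $X \in L(\CA)$. In particular, one does not need to verify any compatibility between the inductive factorizations of $\CA^X$ for different $X$, because the definition of hereditary inductive freeness likewise only requires each individual $\CA^X$ to lie in $\CIF$.
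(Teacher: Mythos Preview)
Your proof is correct and matches the paper's own argument essentially verbatim: fix $X \in L(\CA)$, use the hereditary hypothesis to get that $\CA^X$ is inductively factored, and then invoke Proposition \ref{prop:indfactoredindfree} to conclude that $\CA^X$ is inductively free.
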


While a $2$-arrangement is always inductively factored,
by Remark \ref{rem:2-arr},  
in general, a factored $3$-arrangement need not be inductively factored,
see Example \ref{ex:g333}.
Nevertheless, for a $3$-arrangement, 
we have the following 
counterpart to \cite[Lem.\ 2.15]{hogeroehrle:indfree}
in our setting.

\begin{lemma}
\label{lem:3-arr}
Suppose that $\ell = 3$. Then 
$\CA$ is (inductively) factored if and only if it is 
hereditarily (inductively) factored.
\end{lemma}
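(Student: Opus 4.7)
The plan is to observe that in dimension $\ell=3$ the restriction $\CA^X$ is automatically of rank at most $2$ whenever $X \ne V$, so that the hereditary condition reduces entirely to a statement about $\CA$ itself.

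More precisely, the reverse implication in each case is immediate by choosing $X = V \in L(\CA)$, since $\CA^V = \CA$. For the forward direction, suppose $\CA$ is (inductively) factored and let $X \in L(\CA)$ be arbitrary. If $X = V$, then $\CA^X = \CA$ is (inductively) factored by hypothesis. Otherwise $r(X) \ge 1$, so $\dim X = \ell - r(X) = 3 - r(X) \le 2$. Hence $\CA^X$ is a subarrangement of a $\BBK$-vector space of dimension at most $2$, and therefore $\CA^X$ is an arrangement of rank at most $2$. By Remark \ref{rem:dim2}, every $0$-, $1$-, and $2$-arrangement is inductively factored, and in particular factored. Consequently $\CA^X$ is (inductively) factored for every $X \in L(\CA)$, as required.

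There is essentially no obstacle here; the argument is a case check, and its force lies entirely in the low-dimensional remark that rank $\le 2$ arrangements are automatically inductively factored. The only step worth noting carefully is that $V \in L(\CA)$ is permitted, so that the base case $X = V$ is what carries the hypothesis, while all other $X$ are handled uniformly by the dimension count.
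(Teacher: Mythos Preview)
Your proof is correct and follows essentially the same approach as the paper: the reverse implication is trivial, and for the forward direction you use that any $X \in L(\CA)\setminus\{V\}$ has $\dim X \le 2$, so $\CA^X$ is (inductively) factored by Remark~\ref{rem:dim2}. One minor quibble of wording: $\CA^X$ is an arrangement \emph{in} the space $X$, not a ``subarrangement'' of it; but this does not affect the argument.
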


\begin{proof}
The reverse implication is clear.
For the converse, 
if $\CA$ is (inductively) factored and $X \in L(\CA)\setminus \{V\}$,
then $\dim X \le 2$.
The result then follows from Remark~\ref{rem:dim2}.
\end{proof}

Our next example shows that the equivalence from 
Lemma \ref{lem:3-arr} already fails in dimension~$4$.

\begin{example} 
\label{ex:notheredfactored}
Let $\BBK = \BBQ$ be the field of rational numbers.
Let $\CA$ be the $4$-arrangement defined by the 
ten forms $\alpha_{H_i}$ shown in 
column three of Table \ref{indtable:non-HIFAC},
where we denote the coordinate functions in $S$ simply 
by $x, y, z$ and $t$.

In Table \ref{indtable:non-HIFAC} we present an 
induction table of factorizations for $\CA$ which 
shows that $\CA$ is inductively factored.
In \cite[Ex.\ 2.16]{hogeroehrle:indfree}, we showed that 
$\CA$ is inductively free with $\exp \CA = \{1,3,3,3\}$, but
that $\CA$ is not 
hereditarily inductively free.  
Therefore, it follows from 
Corollary \ref{cor:heredindfactoredheredindfree}
that $\CA$ is not hereditarily inductively factored.
Note that the induction table in \cite[Ex.\ 2.16]{hogeroehrle:indfree}
used a different ordering of the hyperplanes.

We enumerate the ten hyperplanes in the order they appear in 
Table \ref{indtable:non-HIFAC}, 
i.e.\  $H_1 = \ker(t), \ldots$, $H_{10} = \ker(z)$.
We claim that 
\[
\pi = (\pi_1, \ldots, \pi_4) 
= (\{H_1\},\{H_2,H_3,H_{10}\},\{H_4,H_5,H_9\}, \{H_6,H_7,H_8\})
\]
is an inductive factorization of $\CA$.
This follows from the data in Table \ref{indtable:non-HIFAC},
the Addition-Deletion Theorem \ref{thm:add-del-free-factored},
along with the fact that
each occurring restriction $\CA_i''$ is itself 
again inductively factored with the given set of exponents
which can be checked separately. 
As in Table \ref{table1}, 
we indicate the corresponding inductive factorizations of $\CA_i'$
and $\CA_i''$
in the columns headed by $\pi_i'$ and $\pi_i''$, respectively. 
One needs to check that $\exp \CA_i''$ is the required 
multiset and that the induced partition $\pi_i''$
of $\CA_i''$ is a factorization.
As in Table \ref{table1} above, we represent  
the inductive factorizations $\pi_i'$ of the 
subarrangements $\CA_i'$ simply by listing 
the indices of the respective hyperplanes, and  
the notation $\bar1$ indicates the image 
of $H_1$ in $\pi_i''$.  

\begin{table}[h]
\renewcommand{\arraystretch}{1.5}
\begin{tabular}{lllll}  \hline
$\pi_i'$ &  $\exp \CA_i'$ & $\alpha_{H_i}$ & $\pi_i''$ & $\exp \CA_i''$\\
\hline\hline
$\varnothing$ & $0, 0, 0, 0$ &  $t$ & $\varnothing$  & $0,0,0$ \\
$\{1\}$       & $0, 0, 0, 1$ &  $x+y-z+t$& $\{\bar1\}$	& $0,0,1$ \\
$\{1\},\{2\}$ & $0, 0, 1, 1$ &  $x+y-z-t$ & $\{\bar1\}$  & $0,0,1$ \\    
$\{1\},\{2,3\}$    & $0, 0, 1, 2$ & $x-y+z-t$ & $\{\bar1\},\{\bar2,\bar3\}$ & $0,1,2$ \\
$\{1\},\{2,3\},\{4\}$ & $0, 1, 1, 2$ & $x-y+z+t$ & $\{\bar1\},\{\bar2,\bar3\}$ & $0,1,2$ \\
$\{1\},\{2,3\},\{4,5\}$ & $0, 1, 2, 2$ & $x+y+z+t$ & $\{\bar1\},\{\bar2,\bar3\},\{\bar4,\bar5\}$  & $1,2,2$ \\    
$\{1\},\{2,3\},\{4,5\}, \{6\}$ & $1, 1, 2, 2$ & $x+y+z-t$ & $\{\bar1\},\{\bar2,\bar3\},\{\bar4,\bar5\}$  & $1,2,2$  \\    
$\{1\},\{2,3\},\{4,5\}, \{6,7\}$  & $1, 2, 2, 2$&  $y$ & $\{\bar1\},\{\bar2,\bar3\},\{\bar4,\bar5\}$  & $1,2,2$  \\
$\{1\},\{2,3\},\{4,5\}, \{6,7,8\}$   & $1, 2, 2, 3$ &  $x$ & $\{\bar1\},\{\bar2,\bar3\},\{\bar6,\bar7, \bar8\}$   & $1,2,3$ \\        
$\{1\},\{2,3\},\{4,5,9\}, \{6,7,8\}$   & $1, 2, 3, 3$  & $z$ & $\{\bar1\},\{\bar4,\bar5, \bar9\},\{\bar6,\bar7, \bar8\}$   & $1,3,3$ \\
$\{1\},\{2,3,10\},\{4,5,9\}, \{6,7,8\}$  & $1, 3, 3, 3$ & & \\ \hline 
\end{tabular}\\
\bigskip
\caption{Induction Table of Factorizations for $\CA \in \CIFAC\setminus \CHIFAC$.} 
\label{indtable:non-HIFAC} 
\end{table}
\end{example}

\subsection{Products of factored Arrangements}
\label{sec:products}

In this section we show that the various notions of factorizations
from the previous sections 
are compatible with the product construction for arrangements.

\begin{proposition}
\label{prop:product-factored}
Let $(\CA_1, V_1), (\CA_2, V_2)$ be two arrangements.
Then  $\CA = (\CA_1 \times \CA_2, V_1 \oplus V_2)$ is 
nice if and only if both 
$(\CA_1, V_1)$ and $(\CA_2, V_2)$ are nice.
\end{proposition}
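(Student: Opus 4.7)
The plan is to prove both directions by explicit construction of nice partitions, using the lattice product formula \eqref{eq:latticesum} and the description \eqref{eq:productAX} of the localizations in a product.

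For the forward implication, suppose $\pi^{(1)} = (\pi^{(1)}_1, \ldots, \pi^{(1)}_{r_1})$ and $\pi^{(2)} = (\pi^{(2)}_1, \ldots, \pi^{(2)}_{r_2})$ are nice partitions of $\CA_1$ and $\CA_2$. Using the natural identifications $\CA_1 \hookrightarrow \CA$ via $H_1 \mapsto H_1 \oplus V_2$ and $\CA_2 \hookrightarrow \CA$ via $H_2 \mapsto V_1 \oplus H_2$, I would regard the parts of $\pi^{(1)}$ and $\pi^{(2)}$ as subsets of $\CA$ and define $\pi$ as the resulting partition of $\CA$ with $r_1 + r_2$ parts. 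Independence of $\pi$ is immediate from the direct sum structure $V = V_1 \oplus V_2$ together with independence of each $\pi^{(i)}$. For the singleton condition, an arbitrary $X \in L(\CA) \setminus \{V\}$ decomposes as $X_1 \oplus X_2$ with $X_i \in L(\CA_i)$ by \eqref{eq:latticesum}, and by \eqref{eq:productAX}, the non-empty parts of $\pi_X$ are exactly the non-empty parts of $(\pi^{(1)})_{X_1}$ and $(\pi^{(2)})_{X_2}$. Since $X \ne V$, at least one of $X_1 \ne V_1$ or $X_2 \ne V_2$ holds, and niceness of $\pi^{(j)}$ supplies a singleton in $(\pi^{(j)})_{X_j}$, hence in $\pi_X$.

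For the reverse direction, suppose $\pi = (\pi_1, \ldots, \pi_s)$ is a nice partition of $\CA$. The key step is the \emph{purity} claim: each part of $\pi$ lies entirely inside the embedded image of $\CA_1$ or entirely inside that of $\CA_2$. Suppose to the contrary that some $\pi_i$ contained both $K_1 = H_1 \oplus V_2$ with $H_1 \in \CA_1$ and $K_2 = V_1 \oplus H_2$ with $H_2 \in \CA_2$. Setting $X := H_1 \oplus H_2 \in L(\CA)$, a direct calculation using that $K_1', K_1 \in \CA_1$ with $K_1' \supseteq K_1$ forces $K_1' = K_1$ (and symmetrically) gives $\CA_X = \{K_1, K_2\}$. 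But then $\pi_X$ consists of the single non-empty part $\pi_i \cap \CA_X = \{K_1, K_2\}$, which is not a singleton, contradicting Definition~\ref{def:factored}(ii).

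With purity established, let $\pi^{(1)}$ collect those parts of $\pi$ lying in (the embedded image of) $\CA_1$ and let $\pi^{(2)}$ collect the remaining parts, yielding partitions of $\CA_1$ and $\CA_2$ with $|\pi^{(1)}| + |\pi^{(2)}| = s = r_1 + r_2$ by Corollary~\ref{cor:teraofactored}(i). Independence of $\pi^{(1)}$ and $\pi^{(2)}$ follows from independence of $\pi$: choosing representatives from all parts simultaneously, the intersection decomposes as $(\bigcap H_i) \oplus (\bigcap K_j)$, whose codimension is the sum of the two summands' codimensions in $V_1$ and $V_2$; equaling $r_1 + r_2$ forces each summand to attain its maximum, which both confirms $|\pi^{(1)}| = r_1$, $|\pi^{(2)}| = r_2$, and yields independence in each factor. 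For the singleton condition on $\pi^{(1)}$, given $X_1 \in L(\CA_1) \setminus \{V_1\}$, consider $X := X_1 \oplus V_2 \in L(\CA) \setminus \{V\}$; by \eqref{eq:productAX}, $\CA_X$ is the embedded image of $(\CA_1)_{X_1}$, so $\pi_X$ coincides with $(\pi^{(1)})_{X_1}$, whose required singleton comes from niceness of $\pi$. The argument for $\pi^{(2)}$ is symmetric. The main obstacle, as expected, is the purity step; the rest is bookkeeping via \eqref{eq:latticesum} and \eqref{eq:productAX}.
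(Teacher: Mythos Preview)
Your proof is correct. The direction ``both factors nice $\Rightarrow$ product nice'' matches the paper's argument essentially verbatim.

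For the reverse direction you take a genuinely different route. The paper does \emph{not} prove purity: given a nice partition $\pi$ of $\CA$, it simply defines $\pi^1_j := \{H_1 \in \CA_1 \mid H_1 \oplus V_2 \in \pi_j\}$ (keeping only the non-empty blocks) and checks directly that $\pi^1$ is nice for $\CA_1$---independence follows by extending a choice of representatives in $\pi^1$ to a full set of representatives in $\pi$, and the singleton condition follows by looking at $X = X_1 \oplus V_2$ exactly as you do. The paper thus never needs to know whether a block of $\pi$ can mix hyperplanes from both factors. Your purity argument is correct and gives strictly more: it shows that \emph{every} nice partition of a product arrangement already respects the product decomposition at the level of individual blocks. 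The cost is one extra (short) lemma; the payoff is the structural information that the projected partitions $\pi^{(1)}, \pi^{(2)}$ are literally sub-partitions of $\pi$, not merely projections, which also makes your rank count $|\pi^{(1)}| = r_1$, $|\pi^{(2)}| = r_2$ meaningful (the paper does not establish or need this).
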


\begin{proof}
First assume that both $\CA_1$ and $\CA_2$ are nice.
Let $\pi^i = (\pi_1^i, \ldots, \pi_{r_i}^i)$ 
be a nice partition of $\CA_i$,
where $r_i$ is the rank of $\CA_i$ for $i = 1,2$.
Then $r = r_1 + r_2$ is the rank of $\CA$.
Define a partition  
$\pi = (\pi_1, \ldots, \pi_{r_1}, \pi_{r_1+1}, \ldots, \pi_r)$
of $\CA$ by setting
$\pi_j = \{ H_1 \oplus V_2 \mid H_1 \in \pi_j^1\}$ for $1 \le j \le r_1$ and 
$\pi_{j+r_1} = \{ V_1\oplus H_2 \mid H_2 \in \pi_j^2\}$ for $1 \le j \le r_2$.
We claim that $\pi$ is nice for $\CA$.

Since $\pi^1, \pi^2$ are independent, 
so is $\pi$, by construction.
Let $X = X_1  \oplus X_2 \in L(\CA) \setminus \{V_1 \oplus V_2\}$,
cf.\ \eqref{eq:latticesum}.
Without loss, assume that $X_1 \ne V_1$ (else $X_2 \ne V_2$.)
Then, since $\pi^1$ is a factorization of
$\CA_1$, there is a block $\pi_k^1$ such that 
$\pi_k^1 \cap \CA_{X_1} =\{H_1\}$ is a singleton, and thus, 
by definition of $\pi$ and 
using \eqref{eq:product} and \eqref{eq:productAX}, we see that
\[
\pi_k \cap \CA_X = (\pi_k^1 \cap \CA_{X_1}) \oplus V_2 = \{H_1 \oplus V_2\}
\] 
is a singleton, as required. 

Conversely, suppose that $\pi$ is a factorization of $\CA$.
We define partitions $\pi^i$ of  $\CA_i$ for $i = 1,2$ 
simply by taking as blocks the hyperplanes in 
$\CA_i$ that occur as 
a direct summand of a 
member in a block of $\pi$. 
More precisely, the non-empty 
$\pi_j^1 = \{ H_1 \in \CA_1 \mid H_1 \oplus V_2 \in \pi_j \}$ 
are the blocks of $\pi^1$; $\pi^2$ 
is defined analogously.

We claim that $\pi^i$ is a factorization of $\CA_i$ for $i = 1,2$.
We show this for $\pi^1$, the proof for $\pi^2$ is completely analogous.

Since $\pi$ is independent, so is $\pi^1$.
Let $X_1 \in L(\CA_1)\setminus\{V_1\}$. Then $X = X_1 \oplus V_2$ belongs to 
$L(\CA) \setminus \{V_1 \oplus V_2\}$,
cf.\ \eqref{eq:latticesum}.
Since $\pi$ is a factorization of $\CA$, 
there is a block, $\pi_k$ say, such that 
$\pi_k \cap \CA_X = \{H_1 \oplus V_2\}$ is a singleton.
But then, 
using \eqref{eq:product} and \eqref{eq:productAX} again, 
$\pi_k^1 \cap \CA_{X_1} = \{H_1\}$, as required.
Consequently, $\pi^1$ is nice for $\CA_1$, as claimed.
\end{proof}

We can strengthen  
Proposition \ref{prop:product-factored}
further and restrict
the compatibility with products  
to the class of inductively factored arrangements.

\begin{proposition}
\label{prop:product-indfactored}
Let $(\CA_1, V_1), (\CA_2, V_2)$ be two arrangements.
Then  $\CA = (\CA_1 \times \CA_2, V_1 \oplus V_2)$ is 
inductively factored if and only if both 
$(\CA_1, V_1)$ and $(\CA_2, V_2)$ are 
inductively factored  and in that case
the multiset of exponents of $\CA$ is given by 
$\exp \CA = \{\exp \CA_1, \exp \CA_2\}$.
\end{proposition}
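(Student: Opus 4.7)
The plan is to proceed by induction on $|\CA|=|\CA_1|+|\CA_2|$, combining the partition construction from the proof of Proposition~\ref{prop:product-factored} with the product identities \eqref{eq:product} and \eqref{eq:restrproduct}. Specifically, I would prove the sharper statement that for any partition $\pi$ of $\CA = \CA_1 \times \CA_2$, $\pi$ is an inductive factorization of $\CA$ if and only if the two partitions $\pi^1$ of $\CA_1$ and $\pi^2$ of $\CA_2$ induced from $\pi$ (as in the proof of Proposition~\ref{prop:product-factored}) are inductive factorizations of the respective factors. The exponent formula $\exp \CA = \{\exp \CA_1, \exp \CA_2\}$ then falls out from Propositions~\ref{prop:indfactoredindfree} and~\ref{prop:product-indfree}.

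For the base case $|\CA|=0$, both factors are empty and the empty partition is vacuously an inductive factorization. For the forward direction of the inductive step, assume $\pi^1, \pi^2$ are inductive factorizations; without loss of generality $\CA_1 \neq \Phi_{\ell_1}$, and pick a hyperplane $H_0 \in \pi^1_1$ distinguished for $\pi^1$. Set $\hat H_0 := H_0 \oplus V_2 \in \pi_1 \subseteq \CA$. Using \eqref{eq:product} and \eqref{eq:restrproduct} I identify $\CA' = \CA_1' \times \CA_2$ and $\CA'' = \CA_1'' \times \CA_2$, and verify that the induced partitions $\pi'$ and $\pi''$ on these smaller products are precisely the product partitions formed from the pairs $((\pi^1)', \pi^2)$ and $((\pi^1)'', \pi^2)$. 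The induction hypothesis then upgrades both to inductive factorizations of $\CA'$ and $\CA''$. Bijectivity of $\R = \R_{\pi, \hat H_0}$ is straightforward: on hyperplanes of the form $H \oplus V_2$ with $H \in \CA_1 \setminus \pi^1_1$ the map is essentially $\R_1$, bijective by hypothesis, while on hyperplanes $V_1 \oplus K$ it reduces to the identity $K \mapsto K$; the two image sets fill out exactly the type-$1$ and type-$2$ components of $\CA'' = \CA_1'' \times \CA_2$ and are disjoint.

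For the reverse direction, given an inductive factorization $\pi$ of $\CA$ with distinguished hyperplane $\hat H_0 \in \pi_1$, the product structure \eqref{eq:product} forces $\hat H_0 = H_0 \oplus V_2$ or $\hat H_0 = V_1 \oplus K_0$; say the former. Then \eqref{eq:restrproduct} gives $\CA'' = \CA_1'' \times \CA_2$, and the induced partitions $\pi', \pi''$ coincide with the product partitions attached to $((\pi^1)', \pi^2)$ and $((\pi^1)'', \pi^2)$. Applying the induction hypothesis to the strictly smaller products $\CA_1' \times \CA_2$ and $\CA_1'' \times \CA_2$ yields inductive factorizations of $\CA_1', \CA_2, \CA_1''$ carried by these partitions. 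Bijectivity of $\R_1 \colon \CA_1 \setminus \pi^1_1 \to \CA_1''$ is inherited from that of $\R$ by restricting to the type-$1$ component, so $H_0$ is distinguished for $\pi^1$ and $\pi^1$ is an inductive factorization of $\CA_1$, as required; the argument for $\pi^2$ is the same.

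The main obstacle I anticipate is the bookkeeping behind the phrase ``induced partitions agree with the product partitions''. Concretely, one must verify that the operation ``induce to the $i$-th factor'' from the proof of Proposition~\ref{prop:product-factored} commutes both with deletion and with restriction, i.e.\ $(\pi')^i = (\pi^i)'$ and $(\pi'')^i = (\pi^i)''$ for $i=1,2$, where on the right the primes refer to the triple in $\CA_i$ at the appropriate hyperplane. This is a direct but slightly tedious combinatorial check using \eqref{eq:productAX} and \eqref{eq:restrproduct}, together with a case distinction according to whether $\hat H_0$ is of type $1$ or type $2$; once this commutation is in hand, the two directions of the induction slot together cleanly.
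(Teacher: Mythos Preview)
Your approach is essentially the paper's: induction on $|\CA|$, with the product identities $\CA' = \CA_1' \times \CA_2$ and $\CA'' = \CA_1'' \times \CA_2$ driving the recursion. One minor difference: where you verify the bijectivity of $\R$ (and of $\R_1$) directly by decomposing into type-$1$ and type-$2$ hyperplanes, the paper first shows that the relevant hyperplane is distinguished (i.e.\ that the induced $\pi'$ is nice) and then invokes Proposition~\ref{prop:JPinduced} to obtain bijectivity. Your route is slightly more direct and avoids reproving the singleton condition for $\pi'$.

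One caution on your ``sharper statement'' as phrased: it is not literally true for \emph{every} partition $\pi$. If some block of $\pi$ contains both a hyperplane $H \oplus V_2$ and a hyperplane $V_1 \oplus K$, then the induced $\pi^1$ and $\pi^2$ can be inductive factorizations while $\pi$ itself is not even a factorization (it will have too few parts). What your argument actually uses, and what is true, is the equivalence for partitions whose blocks are \emph{pure} in the product, i.e.\ $\pi$ coincides with the product partition built from $\pi^1$ and $\pi^2$. In the forward direction $\pi$ is pure by construction; in the reverse direction $\pi$ is pure because any factorization of a product has pure blocks (apply the singleton condition of Definition~\ref{def:factored}(ii) to $X = H_1 \oplus H_2$ for a hypothetical mixed block). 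Once you add this hypothesis, your commutation identities $(\pi')^i = (\pi^i)'$ and $(\pi'')^i = (\pi^i)''$ are straightforward and the induction closes exactly as you describe.
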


\begin{proof}
First suppose that both 
$\CA_1$ and $\CA_2$ are inductively factored. 
As both are factored, so is $\CA$, 
by Proposition \ref{prop:product-factored}.
We show that $\CA = \CA_1 \times \CA_2$ is inductively factored
by induction on $n = |\CA| = |\CA_1| + |\CA_2|$. For 
$n = 0$ we have $\CA = \Phi_\ell$ and there is nothing to prove.
Now suppose that $n \ge 1$ and that the result holds for 
products of inductively factored arrangements with less than $n$ hyperplanes.
Let $\pi^i = (\pi_1^i, \ldots, \pi_{r_i}^i)$ 
be an inductive factorization of  $\CA_i$, 
where $r_i$ is the rank of $\CA_i$ for $i = 1,2$.
Then $r = r_1 + r_2$ is the rank of $\CA$.
As in the proof of Proposition \ref{prop:product-factored}, define a partition  
$\pi = (\pi_1, \ldots, \pi_{r_1}, \pi_{r_1+1}, \ldots, \pi_r)$
of $\CA$ by setting
$\pi_j = \{ H_1 \oplus V_2 \mid H_1 \in \pi_j^1\}$ for $1 \le j \le r_1$ and 
$\pi_{j+r_1} = \{ V_1\oplus H_2 \mid H_2 \in \pi_j^2\}$ for $1 \le j \le r_2$.
By the argument in the proof of Proposition \ref{prop:product-factored}, $\pi$ is a factorization of $\CA$.

Without loss, 
we may assume that $|\CA_1|  > 0$ and that 
there is a hyperplane $H_1 \in \pi^1_1$ 
which is distinguished with respect to $\pi^1$
so that 
$\CA_1' = \CA_1 \setminus \{H_1\}$ and 
$\CA_1'' = \CA_1^{H_1}$ are  inductively factored.
Let  $H_0 = H_1  \oplus V_2 \in \CA$. 
Let $(\CA, \CA', \CA'')$ be the triple with respect to $H_0$. 
Let $\pi' = \pi \cap \CA'$ be the induced partition.
Then for the parts of $\pi'$ we have
\begin{equation}
\label{eq:pi}
\pi'_1 = \pi_1\setminus \{H_0\} \quad \text{ and }
\quad \pi'_i = \pi_i \quad  \text {for }  2 \le i \le r.
\end{equation}

We claim that 
$H_0$ is distinguished with respect to 
$\pi$. So we need to show that 
$\pi'$ is a factorization of $\CA'$.
Since $\pi$ is independent, clearly so is $\pi'$.

Let $X \in L(\CA')\setminus \{V_1 \oplus V_2\}$. 
Then $X = X_1 \oplus X_2$, where $X_1 \in L(\CA_1')$
and $X_2 \in L(\CA_2)$, cf.\ \eqref{eq:product}  and \eqref{eq:latticesum}.
If $X_2 \ne V_2$, then by assumption on $\pi^2$, there
exists a $\pi^2_j$ for some $1 \le j \le r_2$ so that
$\pi^2_j \cap (\CA_2)_{X_2}$ is a singleton.
Thus, by \eqref{eq:productAX} and \eqref{eq:pi}, 
\[
\pi'_{j + r_1} \cap (\CA')_X = \pi_{j + r_1} \cap (\CA')_X 
= V_1 \oplus \left(\pi^2_j \cap (\CA_2)_{X_2}\right)
\]
is a singleton.
Now suppose that $X_2 = V_2$. Then $X_1 \ne V_1$.
Consequently, since $H_1$ is distinguished for $\pi^1$, 
there is a 
$\pi^1_k$ for some $1 \le k \le r_1$ so that
$\pi^1_k \cap (\CA_1')_{X_1} = \{H'\}$ is a singleton.
Since $H_1 \notin \CA_1'$, we have $H' \ne H_1$.
Consequently, we have 
\[
\pi'_k \cap (\CA')_X = \pi_k \cap (\CA')_X = \left(\pi^1_k \cap (\CA_1')_{X_1}\right) \oplus V_2 = \{H' \oplus V_2\}
\]
is a singleton.

Thanks to \eqref{eq:product} and \eqref{eq:restrproduct}
and our results above,
$\CA' = \CA_1' \times \CA_2$ and 
$\CA'' = \CA_1'' \times \CA_2$ are both 
products of inductively factored arrangements.
Therefore, since $|\CA'|, |\CA''| < n$,
it follows from our induction hypothesis that 
both $\CA'$ and $\CA''$ are inductively factored.
As $H_0$ is distinguished 
with respect to $\pi$, it
follows from Proposition \ref{prop:JPinduced} that
$\R : \CA\setminus \pi_1 \to \CA''$ is bijective and thus
$\CA$ is inductively factored, as desired.

Conversely, suppose that $\pi$ is an inductive factorization of $\CA$. 
Then by Proposition \ref{prop:product-factored}, both 
$\CA_1$ and $\CA_2$ are factored.
As in the proof of Proposition \ref{prop:product-factored},
we define partitions $\pi^i$ of  $\CA_i$ for $i = 1,2$ simply by taking as blocks the hyperplanes in 
$\CA_i$ that 
occur as a direct summand of a member 
in a block of $\pi$. More precisely, the non-empty 
$\pi_j^1 = \{ H_1 \in \CA_1 \mid H_1 \oplus V_2 \in \pi_j \}$ are the blocks of $\pi^1$; $\pi^2$ 
is defined analogously.

We show that both 
$\CA_1$ and $\CA_2$ are inductively factored again by induction on $n = |\CA|$.
If $n = 0$, then $\CA = \Phi_\ell$ and so both $\CA_1$ and $\CA_2$
are empty and there is nothing to show.
So suppose that $n \ge 1$  and that the result holds for 
products which are inductively factored and have 
less than $n$ hyperplanes. Since $\CA$ is inductively factored, 
there is a distinguished hyperplane $H_0$ in $\CA$, so that 
the triple  $(\CA, \CA', \CA'')$ 
with respect to $H_0$
is a triple of inductively factored
arrangements.
Without loss, we may assume that 
$H_0 = H_1 \oplus V_2$ is 
distinguished with respect to $\pi$, 
for some $H_1 \in \CA_1$.

We claim that $H_1$ is distinguished with respect to $\pi^1$.
Let $(\CA_1, \CA_1', \CA_1'')$ be the triple associated with $H_1$.
We need to show that $\pi^1 \cap \CA_1'$ is a factorization of $\CA_1'$.
Since $\pi^1$ is independent, so is $\pi^1 \cap \CA_1'$.
Let $X_1 \in L(\CA_1')\setminus\{V_1\}$. Then $X = X_1 \oplus V_2$ belongs to 
$L(\CA') \setminus \{V_1 \oplus V_2\}$, cf.\ \eqref{eq:latticesum}.
Since $\pi$ is an inductive factorization of $\CA$, there is a part $\pi_k$ of $\pi$
so that $\pi_k \cap (\CA')_X$ is a singleton.
Thus, using \eqref{eq:productAX}, we see that
\[
\pi_k \cap (\CA')_X = \left(\pi^1_k \cap (\CA_1')_{X_1} \right) \oplus V_2
\]
and so $\pi^1_k \cap (\CA_1')_{X_1}$ is a singleton, as
desired. 

Thanks to \eqref{eq:product} and \eqref{eq:restrproduct},  both
$\CA' = \CA_1' \times \CA_2$ 
and $\CA'' = \CA_1'' \times \CA_2$
are products. 
Therefore, since $|\CA'|,  |\CA''| < n$ 
and both $\CA'$ and $\CA''$ are inductively factored,
it follows from our induction hypothesis
that also $\CA_1'$, $\CA_1''$ and $\CA_2$
are inductively factored.
Since $H_1$ is distinguished with respect to $\pi^1$, it 
follows from Proposition \ref{prop:JPinduced} that the restriction map 
$\R_{\pi^1, H_1}$ is bijective and thus
$\CA_1$ is inductively factored as well.

The final statement on exponents  follows from 
Proposition \ref{prop:product-free} and the fact
that inductively factored arrangements are free, 
Proposition \ref{prop:indfactoredindfree}.
\end{proof}

The compatibility from Propositions \ref{prop:product-factored} 
and \ref{prop:product-indfactored}
restricts even further to the classes of hereditarily factored and 
hereditarily inductively factored arrangements, respectively.

\begin{corollary}
\label{cor:product-heredindfactored}
Let $\CA_1, \CA_2$ be two arrangements.
Then  $\CA = \CA_1 \times \CA_2$ is 
hereditarily (inductively) factored if and only if both 
$\CA_1$ and $\CA_2$ are 
hereditarily (inductively) factored.
In case of inductively factored arrangements
the multiset of exponents of $\CA$ is given by 
$\exp \CA = \{\exp \CA_1, \exp \CA_2\}$.
\end{corollary}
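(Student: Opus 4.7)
The plan is to reduce the corollary directly to Propositions \ref{prop:product-factored} and \ref{prop:product-indfactored} by passing to arbitrary restrictions and exploiting the lattice isomorphism \eqref{eq:latticesum} together with the restriction formula \eqref{eq:restrproduct}. The hereditary hypotheses translate into statements about restrictions of each factor, and the two product propositions then combine these restrictions into the corresponding restriction of $\CA$.

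For the forward implication, suppose both $\CA_1$ and $\CA_2$ are hereditarily (inductively) factored. Any $X \in L(\CA)$ decomposes uniquely as $X = X_1 \oplus X_2$ with $X_i \in L(\CA_i)$ by \eqref{eq:latticesum}, and \eqref{eq:restrproduct} yields $\CA^X = \CA_1^{X_1} \times \CA_2^{X_2}$. By hypothesis each factor $\CA_i^{X_i}$ is (inductively) factored, so Proposition \ref{prop:product-factored} (resp.\ Proposition \ref{prop:product-indfactored}) forces $\CA^X$ to be (inductively) factored. Since $X$ was arbitrary, $\CA$ is hereditarily (inductively) factored.

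Conversely, suppose $\CA$ is hereditarily (inductively) factored. Given any $X_1 \in L(\CA_1)$, set $X := X_1 \oplus V_2 \in L(\CA)$; then \eqref{eq:restrproduct} gives $\CA^X = \CA_1^{X_1} \times \CA_2^{V_2} = \CA_1^{X_1} \times \CA_2$, using that $(\CA_2)_{V_2} = \varnothing$ and hence $\CA_2^{V_2} = \CA_2$. By hypothesis $\CA^X$ is (inductively) factored, so invoking the "only if" direction of Proposition \ref{prop:product-factored} (resp.\ Proposition \ref{prop:product-indfactored}) forces $\CA_1^{X_1}$ to be (inductively) factored. Since $X_1$ was arbitrary, $\CA_1$ is hereditarily (inductively) factored, and by symmetry so is $\CA_2$. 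The exponent formula is then immediate from the corresponding statement in Proposition \ref{prop:product-indfactored} applied to $\CA$ itself.

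There is no real obstacle here: all the work has already been done in establishing \eqref{eq:latticesum}, \eqref{eq:restrproduct}, and the non-hereditary product propositions, so the hereditary statements reduce to a routine pointwise verification over $L(\CA) \cong L(\CA_1) \times L(\CA_2)$. The only mild care needed is recognising that an arbitrary $X_1 \in L(\CA_1)$ can be embedded as $X_1 \oplus V_2 \in L(\CA)$ in order to extract the restriction $\CA_1^{X_1}$ from a restriction of the product.
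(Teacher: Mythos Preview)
Your proof is correct and follows essentially the same route as the paper: both directions reduce to Propositions \ref{prop:product-factored} and \ref{prop:product-indfactored} via the lattice isomorphism \eqref{eq:latticesum} and the restriction formula \eqref{eq:restrproduct}. The only cosmetic difference is that in the converse you embed $X_1$ as $X_1 \oplus V_2$ rather than pairing it with an arbitrary $X_2 \in L(\CA_2)$, but this is immaterial.
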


\begin{proof}
First suppose that both $\CA_1$ and $\CA_2$ are
hereditarily (inductively) factored.
Let $X = X_1 \oplus X_2$ be in $L(\CA)$.
Then, by \eqref{eq:restrproduct} and 
Proposition \ref{prop:product-factored}
(Proposition \ref{prop:product-indfactored}),
$\CA^X = \CA_1^{X_1} \times \CA_2^{X_2}$ is 
(inductively) factored.

Conversely, suppose that 
$\CA$ is hereditarily (inductively) factored.
Let $X_i \in L(\CA_i)$ for $i=1,2$. 
Then $X = X_1 \oplus X_2 \in L(\CA)$.
By \eqref{eq:restrproduct} and 
Proposition \ref{prop:product-factored}
(Proposition \ref{prop:product-indfactored}),
both $\CA_1^{X_1}$ and $\CA_2^{X_2}$ are
(inductively) factored. 

The final statement on exponents  
follows from Propositions \ref{prop:product-free}
and the fact that inductively factored arrangements 
are free, Proposition \ref{prop:indfactoredindfree}.
\end{proof}

\bigskip

Utilizing the results of this note,  in our final comment
we compare various classes of free 
and factored arrangements.

\begin{remark}
\label{rem:inclusions}
For the first purpose of this remark
let $\CSS$, $\CIFAC$, $\CIF$, $\RF$ and $\CF$
denote the classes of supersolvable, 
inductively factored, inductively free,
recursively free (\cite[Def.\ 4.60]{orlikterao:arrangements}), 
and free arrangements, respectively.
Thanks to Proposition \ref{prop:indfactoredindfree}
(\cite[Prop.\ 2.2]{jambuparis:factored}),
Proposition \ref{prop:ssindfactored} and  the definition of $\RF$,
we have the following containments
\[
\CSS \subseteq \CIFAC \subseteq \CIF \subseteq \RF \subseteq \CF.
\]
It follows from Example \ref{ex:d13} that 
the first inclusion is proper.
Thanks to Remark \ref{rem:indfactored}(ii), the Coxeter arrangement of 
type $D_4$ is not inductively factored but inductively free.
Thus the second inclusion is also proper.

By \cite[Thm.\ 1.1]{hogeroehrle:indfree}, $\CA(G(3,3,3))$ 
is not inductively free.
But thanks to  \cite[Thm.\ 3.6(ii)]{amendhogeroehrle:indfree},
$\CA(G(3,3,3))$ is recursively free. 
Thus the third inclusion is proper.

Terao \cite{terao:freeI} has shown that each reflection
arrangement $\CA(W)$ is free (cf.\ \cite[\S 6]{orlikterao:arrangements}).
In \cite[Rem.\ 3.7]{cuntzhoge},
Cuntz and the first author showed that  the reflection arrangement
$\CA(G_{27})$ 
of the rank $3$ 
exceptional complex reflection group $G_{27}$
is not recursively free.
Consequently, the final inclusion is also proper.

Thus we have proper inclusions throughout:
\[
\CSS \subsetneq \CIFAC \subsetneq \CIF \subsetneq \RF \subsetneq \CF.
\]

It is also of interest 
to compare the hereditary classes from above.
For that purpose let $\CFac$ be the class of nice 
arrangements from  Definition \ref{def:factored}, while  
$\CHFAC$ and $\CHIFAC$ denote the classes 
of hereditarily factored and 
hereditarily inductively factored arrangements
from Definition \ref{def:heredindfactored}. 
Then by definition and 
Corollary \ref{cor:ssheredindfactored}, we have
\[
\CSS  \subseteq \CHIFAC \subseteq \CHFAC \subseteq \CFac. 
\]
It follows from Example \ref{ex:d13} 
and Lemma \ref{lem:3-arr}
that the first inclusion is proper.

Thanks to Example \ref{ex:g333}, 
$\CA = \CA(G(3,3,3))$ is 
factored and thus by 
Lemma \ref{lem:3-arr}, 
$\CA$ belongs to $\CHFAC$.
However, since $\CA$ is not inductively free,
it does not belong to $\CHIFAC$.
Consequently, the second inclusion is proper.

The fact that the final inclusion is proper
follows from Example \ref{ex:notheredfactored}.
For, here one can check that the restriction of $\CA$ to $H_1 = \ker (t)$
gives a $3$-arrangement whose Poincar\'e polynomial 
does not factor into linear terms over $\BBZ$.
It follows from Corollary \ref{cor:teraofactored}(i)
that $\CA^{H_1}$ is not nice and therefore,
$\CA$ is not hereditarily factored.
Therefore, we also have proper inclusions throughout here as well:
\[
\CSS  \subsetneq \CHIFAC \subsetneq \CHFAC \subsetneq \CFac. 
\]
That all these classes of arrangements differ is not 
surprising. Quite striking however is the fact that  
counterexamples to essentially each 
reverse containment are found among the small rank reflection arrangements
and their restrictions.
\end{remark}



\bigskip

\bibliographystyle{amsalpha}

\newcommand{\etalchar}[1]{$^{#1}$}
\providecommand{\bysame}{\leavevmode\hbox to3em{\hrulefill}\thinspace}
\providecommand{\MR}{\relax\ifhmode\unskip\space\fi MR }
\providecommand{\MRhref}[2]{%
  \href{http://www.ams.org/mathscinet-getitem?mr=#1}{#2} }
\providecommand{\href}[2]{#2}


\end{document}